\documentclass[12pt,reqno]{amsart}%
\usepackage{amsfonts}
\usepackage{amsmath}
\usepackage{amssymb}
\usepackage{color}
\usepackage{graphicx}%
\providecommand{\U}[1]{\protect\rule{.1in}{.1in}}
\newtheorem{theorem}{Theorem}[section]
\theoremstyle{plain}

\newtheorem{lemma}{Lemma}[section]

\newtheorem{remark}{Remark}

\numberwithin{equation}{section}
\begin{document}
\title[Stability for the critical points of the HLS inequality]{Stability for Critical Points of the Hardy--Littlewood--Sobolev Inequality and a Dual Stability Framework}
\author{Lu Chen}
\address[Lu Chen]{Key Laboratory of Algebraic Lie Theory and Analysis of Ministry of Education, School of Mathematics and Statistics, Beijing Institute of Technology, Beijing
100081, PR China}
\email{chenlu5818804@163.com}

\author{Guozhen Lu}
\address[Guozhen Lu]{Department of Mathematics, University of Connecticut, Storrs, CT 06269, USA}
\email{guozhen.lu@uconn.edu}

\author{Hanli Tang}
\address[Hanli Tang]{Laboratory of Mathematics and Complex Systems (Ministry of Education), School of Mathematical Sciences, Beijing Normal University, Beijing, 100875, China}
\email{hltang@bnu.edu.cn}
\address{}
\keywords{stability for critical points, Hardy-Littlewood-Sobolev inequality, Soboelv inequality}
\thanks{The first author was partly supported by the National Key Research and Development Program (No.
2022YFA1006900) and National Natural Science Foundation of China (No. 12271027); the second author was partly supported by grants from Simons Foundation; and the third author
was partly supported by  National Natural Science Foundation of China (Grant No.12471100).}


\begin{abstract}
 Although quantitative stability for critical points of the Sobolev and fractional Sobolev inequalities has been extensively studied, the corresponding stability theory for critical points of the Hardy--Littlewood--Sobolev (HLS) inequality remains largely unexplored. A major difficulty is that the natural stability problem for HLS critical points involves a non-Hilbertian distance, so the classical orthogonal decomposition methods used in Hilbert-space settings are no longer available.

 In this paper, we develop a weak-decomposition--strong-stability method tailored to the stability structure of HLS critical points and establish the corresponding stability inequality. Our approach also yields an explicit lower bound for the stability of Palais--Smale sequences of the HLS integral equation. To the best of our knowledge, this appears to be the first quantitative stability result for Palais--Smale sequences of a variational functional measured in a non-Hilbertian distance. We further introduce a duality framework connecting Struwe-type decompositions and stability inequalities for critical points of the Sobolev inequality with their HLS counterparts. As a consequence, we derive Struwe-type decomposition and stability results for critical points of the fractional Sobolev inequality for general functions, thereby removing the nonnegativity assumption imposed in \cite{NK}.

\end{abstract}
\maketitle
\section{Introduction}
\subsection{Background}
Lieb's sharp form \cite{Lieb} of the Hardy-Littlewood-Sobolev inequality in $\mathbb{R}^n$  for $0<s<\frac {n}{2}$  states
\begin{equation}
	\label{eq-hls}
	\| f\|_{L^{\frac{2n}{n+2s}}(\mathbb{R}^n)}^2\geq \mathcal S_{s,n}\left \|(-\Delta)^{-s/2} f \right\|_{L^2{(\mathbb{R}^n)}}^2
	\qquad\text{for all}\ f\in L^{\frac{2n}{n+2s}}(\mathbb{R}^n)
\end{equation}
with
\begin{equation}
	\label{eq:sobconst}
	\mathcal S_{s,n} = (4\pi)^s \ \frac{\Gamma(\frac{n+2s}{2})}{\Gamma(\frac{n-2s}{2})} \left( \frac{\Gamma(\frac n2)}{\Gamma(n)} \right)^{2s/n}
	= \frac{\Gamma(\frac{n+2s}{2})}{\Gamma(\frac{n-2s}{2})} \ |\mathbb{S}^n|^{2s/n} \,
\end{equation}
being the sharp constant. Furthermore, he also showed that the equality of HLS inequality \eqref{eq-hls} holds if and only if
$$f\in M_{HLS}:=\left\{cF(\frac{\cdot-x_0}{a}): c\in \mathbb{R},\  x_0\in \mathbb{R}^n,\  a>0 \right\},$$
where $F(x)=(1+|x|^2)^{-\frac{n+2s}{2}}$.
\medskip

The Euler-Lagrange equation of the optimization problem related to (\ref{eq-hls}) reads, in a suitable normalization,
\begin{equation}\label{hls-equation}
(-\Delta)^{-s}f=|f|^{-\frac{4s}{n+2s}}f.
\end{equation}
The classification of the positive solution of (\ref{hls-equation}) was solved by Chen, Li and Ou \cite{CLO} and Li \cite{Li}. They show that a positive solution $f$ satisfies (\ref{hls-equation}) if and only if
$$f\in \mathcal{M}_{HLS}^{+}:=\left\{c_{n,s}\left(\frac{\lambda}{1+\lambda^2|x-z|^2}\right)^{\frac{n+2s}{2}}:   z\in \mathbb{R}^n,\  \lambda>0 \right\},$$
where $$c_{n,s}=\left(\frac{\Gamma(\frac{n}{2}+s)}{\Gamma(\frac{n}{2}-s)}\right)^{\frac{n+2s}{4s}}.$$

\subsection{Stability problem of the Sobolev inequality}

Lieb in \cite{Lieb} also proved the sharp Sobolev inequality in $\mathbb{R}^n$ as an equivalent reformulation of the sharp Hardy-Littlewood-Sobolev inequality in the case of conformal index. Specifically, he proved for $0<s<\frac {n}{2}$ there holds
\begin{equation}
	\label{eq-sob}
	\left\|(-\Delta)^{s/2} g \right\|_{L^2(\mathbb{R}^n)}^2 \geq \mathcal S_{s,n} \| g\|_{L^{\frac{2n}{n-2s}}(\mathbb{R}^n)}^2
	\qquad\text{for all}\ g\in \dot H^s(\mathbb{R}^n),
\end{equation}
where $\dot H^s(\mathbb{R}^n)$ denotes the completion of $C_{c}^{\infty}(\mathbb{R}^n)$ under the norm $\big(\int_{\mathbb{R}^n}|(-\Delta)^{\frac{s}{2}}g|^2dx\big)^{\frac{1}{2}}$.
The sharp constant $\mathcal S_{s,n}$ of inequality (\ref{eq-sob}) has been computed first by Rosen \cite{Ro} in the case $s=1$, $n=3$ and then independently by Aubin \cite{Au} and Talenti \cite{Ta} in the case $s=1$. Furthermore, Lieb also showed that the equality of Sobolev inequality \eqref{eq-sob} holds if and only if
$$g\in M_s:=\left\{cV(\frac{\cdot-x_0}{a}): c\in \mathbb{R},\  x_0\in \mathbb{R}^n,\  a>0 \right\},$$
where $V(x)=(1+|x|^2)^{-\frac{n-2s}{2}}$.
\medskip

The Euler-Lagrange equation of the optimization problem related to (\ref{eq-sob}) reads, under a suitable normalization,
\begin{equation}\label{sob-equation}
(-\Delta)^{s}g=|g|^{\frac{4s}{n-2s}}g,
\end{equation}
which is equivalent to (\ref{hls-equation}). Thus a positive solution $g$ satisfies (\ref{sob-equation}) if and only if
$$g\in \mathcal{M}_s^{+}:=\left\{d_{n,s}\left(\frac{\lambda}{1+\lambda^2|x-z|^2}\right)^{\frac{n-2s}{2}}:   z\in \mathbb{R}^n,\  \lambda>0 \right\},$$
where $$d_{n,s}=\left(\frac{\Gamma(\frac{n}{2}+s)}{\Gamma(\frac{n}{2}-s)}\right)^{\frac{n-2s}{4s}}.$$
Denote
$$U(z,\lambda)(x)=d_{n,s}\left(\frac{\lambda}{1+\lambda^2|x-z|^2}\right)^{\frac{n-2s}{2}}\in \mathcal{M}_{s},$$
and we will call such functions Aubin-Talenti bubbles.
\medskip

Once the sharp inequality (\ref{eq-sob}) is established and solutions to (\ref{sob-equation}) are characterized, it is natural to ask the stability question. Roughly speaking, if
a function almost satisfies (\ref{sob-equation}) or (\ref{eq-sob}), in some sense, it must be close to an Aubin-Talenti bubble or an Aubin-Talenti bubble up to scaling. Nowadays,  these two kinds of stability problems are known as the stability of the Sobolev inequality and the stability of critical points of the Sobolev inequality respectively. More specifically, the stability of the Sobolev inequality started from the work of Brezis and Lieb. In \cite{BrLi} they asked if the following refined first order Sobolev inequality ($s=1$ in (\ref{eq-sob}))
holds for some distance function $d$:
$$\left\|(-\Delta)^{1/2} g \right\|_{L^2(\mathbb{R}^n)}^2 - \mathcal S_{1,n} \| g\|_{L^{\frac{2n}{n-2}}(\mathbb{R}^n)}^2\geq c d^{2}(g, M_1).$$
This question was answered affirmatively  in a pioneering work by Bianchi and Egnell \cite{BiEg}, subsequently in the case $s=2$ by the second author and Wei \cite{LuWe} and in the case of any positive even integer $s<n/2$ by
Bartsch, Weth and Willem \cite{BaWeWi}. In 2013, Chen, Frank and Weth \cite{ChFrWe} established the stability of the Sobolev inequality for all $0<s<n/2$. They proved that
\begin{equation}\label{Sob sta ine}
\left\|(-\Delta)^{s/2} g \right\|_{L^2(\mathbb{R}^n)}^2 - \mathcal S_{s,n} \| g\|_{L^{\frac{2n}{n-2s}}(\mathbb{R}^n)}^2\geq C_{n,s} d^{2}(g, M_s),
\end{equation}
for all $g\in \dot H^s(\mathbb{R}^n)$, where $C_{n,s}>0$ and $d(g,M_s)=\min\{\|(-\Delta)^{s/2}(U-\phi)\|_{L^2}:\phi \in M_s\}$.
\medskip

Recently, Dolbeault, Esteban, Figalli, Frank and Loss \cite{DEFFL} established the optimal quantitative stability of the sobolev inequality for the first time. And the current authors \cite{CLT1,CLT2,CLT3} set up the optimal stability of the fractional Sobolev inequality for all $0<s<n/2$. In summary, the results of \cite{CLT1, CLT2, CLT3,DEFFL} about the optimal stability of fractional Sobolev inequality can be stated as follows:

\vskip0.3cm
\textbf{Theorem A }
\textit{For any fixed} $s\in(0,n/2)$, \textit{there exists a positive constant} $\beta_s$ \textit{such that for any} $f\in  \dot H^s(\mathbb{R}^n)$, \textit{there holds}
$$\left\| (-\Delta)^{s/2} f \right\|_{L^2(\mathbb{R}^n)}^2-\mathcal S_{s,n} \|f\|_{L^{\frac{2n}{n-2s}}(\mathbb{R}^n)}^2\geq \frac{\beta_{s}}{n} \inf_{h\in M_s}\|(-\Delta)^{s/2}(f-h)\|_{L^2(\mathbb{R}^n)}^2.$$
Furthermore, $\beta_s$ behaviors like $s$ when $s\rightarrow 0$. We also mention that
Dolbeault, Esteban, Figalli, Frank and Loss \cite{DEFFL} established the optimal stability of Gross-type Log-Sobolev inequality in $\mathbb{R}^n$ \cite{Gr}  through dimension-limit method as the dimension goes to infinity and Chen, Lu, Tang \cite{CLT2} established the optimal stability of Beckner's Log-Sobolev inequality
in $\mathbb{S}^n$ obtained in \cite{Be1992,Be1997} through order-limit method as the order goes to zero. This sharpens the local stability of Log-Sobolev inequality in $\mathbb{S}^n$ obtained earlier in \cite{CLT-JFA}.
\medskip

On the other hand, study on the  stability problem  for critical points of the Sobolev inequality started from the celebrated work of Struwe \cite{Str} for $s=1$, then generalized by G\'{e}rard \cite{Ge} for $0<s<n/2$, see also  Palatucci and Pisante \cite[Theorem1.1]{PP} and Fang and Gonz\'{a}lez \cite[Theorem 1.3]{FG}. Here we only state the result for the one bubble case.
\vskip0.3cm
\textbf{Theorem B. }
\textit{Let} $0<s<n/2$ \textit{and} $\{u_k\}$ \textit{be a sequence of nonnegative functions such that}
$$\frac{1}{2}\mathcal{S}_{s,n}^{\frac{n}{2s}}\leq \|u_k\|^2_{\dot H^s(\mathbb{R}^n)}\leq \frac{3}{2}\mathcal{S}_{s,n}^{\frac{n}{2s}}.$$ \textit{If it satisfies}
$$\|(-\Delta)^{s}u_k-u_{k}^{\frac{n+2s}{n-2s}}\|_{\dot H^{-s}(\mathbb{R}^n)}\rightarrow 0,$$
\textit{Then there exist sequences} $\{z_k\}\subset \mathbb{R}^n$ \textit{and} $\lambda_k\subset (0,+\infty)$ \textit{such that}
$$\|u_k-U(z_k,\lambda_k)\|_{\dot H^s(\mathbb{R}^n)}\rightarrow 0 ~~~\textit{as}~~~~~ k\rightarrow \infty.$$
\vskip0.3cm

Ciraolo, Figalli and Maggi \cite{CFM} established the stability of critical points of Sobolev inequality with the single bubble. Later, Figalli and Glaudo \cite{FG} and Deng, Sun and Wei \cite{DSW} established the stability of critical points of Sobolev inequality with the multi-bubble when $3\leq n\leq 5$ and $n\geq 6$ respectively. De Nitti and K\"{o}nig \cite{NK} established the following quantitative stability for critical points of fractional Sobolev inequalities for non-negative function.
\vskip0.3cm
\textbf{Theorem C. }
\textit{For} $0<s<n/2$ \textit{and nonnegative} $u$ \textit{satisfying}
\begin{equation}\label{condition}
\frac{1}{2}\mathcal{S}_{s,n}^{\frac{n}{2s}}\leq \|u\|^2_{\dot H^s(\mathbb{R}^n)}\leq \frac{3}{2}\mathcal{S}_{s,n}^{\frac{n}{2s}}.
\end{equation}
 \textit{Then there exist a positive constant} $C_{cp}$ \textit{such that}
$$\|(-\Delta)^{s}u-u^{\frac{n+2s}{n-2s}}\|_{\dot H^{-s}(\mathbb{R}^n)}\geq C_{cp}\inf_{h\in \mathcal{M}_s^{+}}\|u-h\|_{\dot H^{s}(\mathbb{R}^n)}.$$ Furthermore, they also gave the sharp lower bound for $C_{cp}$ if $u$ is replaced by Palais-Smale sequence of fractional Sobolev equation.
\vskip0.3cm

\subsection{Stability problem of the Hardy-Littlewood-Sobolev inequality and our main results}
Although the HLS inequality (\ref{eq-hls}) is equivalent to the Sobolev inequality (\ref{eq-sob}) and stability of the Sobolev inequality has been
investigated broadly, the stability of the HLS inequality has been much less explored. Until 2015, Carlen \cite{Carlen} established the stability of the HLS inequality by a dual stability method. Due to the non-Hilbert distance $\|\cdot\|_{L^{\frac{2n}{n+2s}}}$, it seems difficult to prove the stability of the HLS inequality directly. Recently, the current authors \cite{CLT3} developed a method based on the $H^{-s}$-decomposition instead of $L^{\frac{2n}{n+2s}}$-decomposition to obtain the local stability of the HLS-inequality, then directly established the stability of the HLS-inequality with the optimal order and dimension-dependent constants.
\vskip0.3cm
\textbf{Theorem D}
\textit{For any fixed} $s\in(0,n/2)$, \textit{there exists a positive constant} $\beta_s$ \textit{such that for any} $f\in L^{\frac{2n}{n+2s}}(\mathbb{R}^n)$, \textit{there holds}
$$\|f\|^2_{L^{\frac{2n}{n+2s}}(\mathbb{R}^n)}-\mathcal{S}_{s,n}\|(-\Delta)^{-s/2}g\|^2_{L^2(\mathbb{R}^n)}\geq \frac{\beta_s}{n}\inf_{h\in M_{HLS}}\|f-h\|^2_{L^{\frac{2n}{n+2s}}(\mathbb{R}^n)}.$$
\textit{Furthermore, there exist two absolutely constants} $c,C>0$ \textit{such that} $c\leq \frac{\beta_s}{s}\leq C$ \textit{when} $s\rightarrow 0$.
\vskip0.3cm

Although the quantitative stability for the critical points of the Sobolev inequality has been studied extensively, the stability for critical points of the HLS inequality remains largely  unexplored. It is mainly because the stability for critical points of the HLS inequality involves the non-Hilbert distance, classical orthogonal decomposition technique to deal with Hilbert distance fails. The main purpose of this paper is to develop some new technique to deal with the stability problem for critical points of the HLS inequality. We will  prove the following

\vskip0.3cm
\begin{theorem}\label{stability}
Let $0<s<n/2$ and $f\in L^{\frac{2n}{n+2s}}(\mathbb{R}^n)$ whose energy satisfies
$$\frac{1}{2}\mathcal{S}_{s,n}^{\frac{n}{2s}}\leq\int_{\mathbb{R}^n}|f|^{\frac{2n}{n+2s}}dx\leq \frac{3}{2}\mathcal{S}_{s,n}^{\frac{n}{2s}}.$$
Then there exists a positive constant $C_{n,s}$ such that
$$\left\||f|^{-\frac{4s}{n+2s}}f-(-\Delta)^{-s}f\right\|_{L^\frac{2n}{n-2s}(\mathbb{R}^n)}\geq C_{n,s}d(f,\mathcal{M}_{HLS}),$$
where $$\mathcal{M}_{HLS}:=\left\{\pm c_{n,s}\left(\frac{\lambda}{1+\lambda^2|x-z|^2}\right)^{\frac{n+2s}{2}}:   z\in \mathbb{R}^n,\  \lambda>0 \right\}$$
and
$$d(f,\mathcal{M}_{HLS})=\inf\limits_{h\in \mathcal{M}_{HLS}}\|f-h\|_{L^{\frac{2n}{n+2s}}(\mathbb{R}^n)}.$$
\end{theorem}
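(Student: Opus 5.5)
The plan is a contradiction and compactness argument. We first establish a Struwe-type qualitative statement, and then upgrade it to a linear quantitative bound using a weak (Hilbertian) decomposition combined with strong ($L^{\frac{2n}{n+2s}}$) control.

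\emph{Reduction and qualitative step.} Write $p=\frac{2n}{n+2s}$, $p'=\frac{2n}{n-2s}$, and
\[
\delta(f)=\bigl\||f|^{p-2}f-(-\Delta)^{-s}f\bigr\|_{L^{p'}},
\]
noting that $-\frac{4s}{n+2s}=p-2$. Set $g=|f|^{p-2}f\in L^{p'}$, so that $f=|g|^{p'-2}g$ and $\|g\|_{L^{p'}}^{p'}=\|f\|_{L^p}^p$. Then $\delta(f)=\|g-(-\Delta)^{-s}(|g|^{p'-2}g)\|_{L^{p'}}$.

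Let $w=(-\Delta)^{-s}f\in\dot H^s$. By Sobolev, $\|g-w\|_{L^{p'}}\lesssim\delta$. We also have
\[
\|(-\Delta)^s w-|w|^{p'-2}w\|_{\dot H^{-s}}=\|f-|w|^{p'-2}w\|_{\dot H^{-s}}\lesssim\| |g|^{p'-2}g-|w|^{p'-2}w\|_{L^p}\lesssim \delta,
\]
using the elementary inequality $||a|^{q}a-|b|^qb|\lesssim(|a|^q+|b|^q)|a-b|$, H\"older, and the energy bound. Hence $w$ is almost a critical point of the Sobolev problem, and by testing against $w$ its energy is close to $\int|f|^p$.

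Since $\delta(f)$ is bounded above by a constant times $\|f\|_{L^p}^{p-1}+\|f\|_{L^p}$, it suffices to prove the theorem when $\delta(f)\le\varepsilon_0$ for a small $\varepsilon_0$; the case of large $\delta$ is trivial because $d(f,\mathcal M_{HLS})\le\|f\|_{L^p}$ is bounded. For a sequence with $\delta(f_k)\to0$, the energy window $[\tfrac12,\tfrac32]\mathcal S^{n/2s}$ forces a single bubble in the profile decomposition. This follows from the sign-changing Struwe/G\'erard decomposition for $w_k$, since every nontrivial sign-changing solution has energy at least $2\mathcal S^{n/2s}$. We conclude $w_k\mp U(z_k,\lambda_k)\to0$ in $\dot H^s$, hence $g_k\mp U_k\to0$ in $L^{p'}$, and therefore $f_k\mp U_k^{p'-1}\to0$ in $L^p$, where $U_k^{p'-1}\in\mathcal M_{HLS}$. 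This gives $d(f,\mathcal M_{HLS})\to0$ as $\delta\to0$.

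\emph{Quantitative step (weak decomposition, strong stability).} Since $\|\cdot\|_{L^p}$ is not Hilbertian, we do not decompose $f$ orthogonally in $L^p$. Instead we decompose in $\dot H^{-s}$ (equivalently, decompose $w$ in $\dot H^s$). Choose $h=\pm c_{n,s}(\ldots)\in\mathcal M_{HLS}$ minimizing $\|f-h\|_{\dot H^{-s}}$ in a neighborhood, and write $f=h+\rho$ with $\rho$ $\dot H^{-s}$-orthogonal to the tangent space $T_h\mathcal M_{HLS}$. Linearizing $\Phi(f)=|f|^{p-2}f-(-\Delta)^{-s}f$ at $h$ gives
\[
\Phi(f)=(p-1)|h|^{p-2}\rho-(-\Delta)^{-s}\rho+N(\rho).
\]
The spectral gap of the linearized operator comes from the nondegeneracy of bubbles: the kernel of $(p-1)|h|^{p-2}\phi-(-\Delta)^{-s}\phi$ is exactly $T_h\mathcal M$, which is Chen--Frank--Weth type information on the sphere via stereographic projection, with eigenvalues of $(-\Delta)^{-s}$ relative to the weight $|h|^{p-2}$ explicitly given by Gamma ratios. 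Pairing with $\rho$ then yields
\[
\langle\Phi(f),\rho\rangle\ge c\,\|\rho\|^2_{\dot H^{-s}}-\text{(nonlinear error)}.
\]

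This controls only the weak norm. To reach the strong norm, we test instead against $|f|^{p-2}f-|h|^{p-2}h$. Alternatively, we pass through the dual variable $g-h^{p-1}$, in which $\|g-h^{p-1}\|_{L^{p'}}$ is controlled via $\|g-w\|_{L^{p'}}\lesssim\delta$ and the $\dot H^s$ estimate on $w-h^{p-1}$ obtained from the Sobolev critical-point stability for sign-changing functions near one bubble. That stability is the local linear estimate of Ciraolo--Figalli--Maggi/De Nitti--K\"onig, which near a single bubble does not need positivity. From there, $\|f-h\|_{L^p}=\||g|^{p'-2}g-|h^{p'-1}|^{p'-2}h^{p'-1}\|_{L^p}\lesssim\|g-h^{p-1}\|_{L^{p'}}\lesssim\delta$ by pointwise Lipschitz bounds of $t\mapsto|t|^{p'-2}t$ with $p'>2$, using H\"older together with the closeness of $g$ to a bubble.

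The main obstacle is the nonlinear error. When $p<2$, the map $t\mapsto|t|^{p-2}t$ is not $C^{1,\alpha}$-controllable near zero, so a direct $L^p$ Taylor expansion of $\Phi$ fails. This is why I would route the quantitative estimate through the dual variable $g$ and the Sobolev-side function $w$, where $p'>2$ makes all Taylor remainders superlinear, and use $L^p$ only at the final step via the Lipschitz bound above.
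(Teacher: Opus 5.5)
Your argument, in the form you finally commit to (routing through $g=|f|^{p-2}f$ and $w=(-\Delta)^{-s}f$), is correct, and it takes a genuinely different route from the paper's.

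\emph{Your route.} Three facts carry it:
\begin{itemize}
\item $\|g-w\|_{L^{p'}}=\delta(f)$ exactly.
\item $\|(-\Delta)^s w-|w|^{p'-2}w\|_{\dot H^{-s}}\lesssim\delta(f)$, by HLS and the local Lipschitz bound for $t\mapsto|t|^{p'-2}t$ with $p'>2$.
\item $d(f,\mathcal M_{HLS})\lesssim\delta(f)+\inf_{U\in\mathcal M_s}\|w-U\|_{\dot H^s}$, because $|U|^{p'-2}U\in\mathcal M_{HLS}$.
\end{itemize}
So Theorem \ref{stability} reduces to two ingredients. The first is a Struwe decomposition placing $w$ near $\mathcal M_s$ once $\delta$ is small. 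The second is the standard local linear estimate near one Sobolev bubble. That estimate uses only the spectral gap, with the component of the remainder along $U$ handled by flipping its sign in the test function, together with superlinear remainders. Hence it needs no positivity.

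\emph{The paper's route.} It never leaves the HLS side. It uses an $L^{p}$ profile decomposition (Lemma \ref{rohls}) and an $H^{-s}$-orthogonal decomposition on $\mathbb S^n$. It then proves the comparison Lemma \ref{infinitesimals} between $\|u-\varphi\|_{L^p}$ and $d(u,\mathcal M)$, and tests a spherical-harmonic expansion against the normalized remainder. The Sobolev statement (Theorem \ref{sob-stability}) is then obtained by duality.

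\emph{What each buys.} The paper's route gives an explicit local constant for Palais--Smale sequences directly in the $L^p$ metric. Yours runs the duality in the opposite, easier direction. It also avoids expanding $t\mapsto|t|^{p-2}t$ with $p<2$, where uniformity of the $o(\epsilon)$ over remainders that are merely in $L^p$ is delicate. The price is that you take the Sobolev-side local estimate as a black box. Your first sketch (linearizing $\Phi$ at $h$ and pairing with $\rho$) is not needed and should be dropped.

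\emph{The step you must supply.} You assert without proof that every nontrivial sign-changing solution of $(-\Delta)^s\psi=|\psi|^{p'-2}\psi$ has $\|\psi\|_{\dot H^s}^2\ge 2\mathcal S_{s,n}^{n/2s}$. For $s>1$ this cannot be obtained by testing with $\psi^\pm$, since these need not lie in $\dot H^s$. This is exactly the obstruction the paper's sign argument is built to overcome. Prove it in the dual variable, as the paper does:
\begin{itemize}
\item $\phi=|\psi|^{p'-2}\psi\in L^p$ solves $(-\Delta)^{-s}\phi=|\phi|^{p-2}\phi$.
\item Test with $\phi^\pm$. Positivity of the Riesz kernel and HLS give $\|\phi^\pm\|_{L^p}^p\le\langle(-\Delta)^{-s}\phi^\pm,\phi^\pm\rangle\le\mathcal S_{s,n}^{-1}\|\phi^\pm\|_{L^p}^2$.
\item Hence $\|\phi^\pm\|_{L^p}^p\ge\mathcal S_{s,n}^{n/2s}$, and $\|\psi\|_{\dot H^s}^2=\|\phi\|_{L^p}^p\ge 2\mathcal S_{s,n}^{n/2s}$.
\end{itemize}

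\emph{Minor slips.} In $|h^{p'-1}|^{p'-2}h^{p'-1}$ the exponent $p'-1$ should be $p-1$. Also, $d(f,\mathcal M_{HLS})$ is bounded by $\|f\|_{L^p}+\|U_H\|_{L^p}$ rather than by $\|f\|_{L^p}$; that bound is all the large-$\delta$ case needs.
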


\begin{remark}
In fact, we also obtain that if $\{f_k\}$ is the Palais-Smale for HLS integral equation, then
\begin{equation}\begin{split}
&\lim\limits_{k\rightarrow +\infty}\frac{\left\||f_k|^{-\frac{4s}{n+2s}}f_k-(-\Delta)^{-s}f_k\right\|_{L^\frac{2n}{n-2s}(\mathbb{R}^n)}}{d(f_k,\mathcal{M}_{HLS})}\\
&\ \ \geq \min\{\frac{\Gamma(\frac{n}{2}-s+1)}{\Gamma(\frac{n}{2}+s+1)}\frac{s}{n/2+s+1}
|\mathbb{S}^n|^{-\frac{2s}{n}}\sqrt{\frac{2s}{n+2s}},\ \frac{\Gamma(n/2-s)}{\Gamma(n/2+s)}\frac{2s}{n+2s}|\mathbb{S}^n|^{-\frac{2s}{n}}\}.
\end{split}\end{equation}
To the best of our knowledge, it appears to be the first time to give the explicit lower bound for Palais-Smale sequences of a  functional under non-Hilbert distance.
\end{remark}
\vskip0.3cm
\begin{remark}\label{remark of thm}
We would like to point out that from the proof, we can see the stability inequality still holds when $f$ satisfies $\alpha\mathcal{S}_{s,n}^{\frac{n}{2s}}\leq\int_{\mathbb{R}^n}|f|^{\frac{2n}{n+2s}}dx\leq \beta\mathcal{S}_{s,n}^{\frac{n}{2s}}$ for any $0<\alpha<1<\beta<2$.
\end{remark}

The duality framework for the stability of the Sobolev inequality and the HLS inequality has been systematically developed  by Carlen in \cite{Carlen} and  subsequently by Chen-Lu-Tang in \cite{CLT1, CLT2, CLT3}. However, whether there exists a  duality framework for the stability of the critical points of the Sobolev inequality and the HLS inequality is still unknown. In this paper, we will demonstrate that the stability of critical points for the
HLS inequality can actually imply the stability of critical points of the Sobolev inequality. We will  derive the following
\vskip0.3cm
\begin{theorem}\label{sob-stability}
Let $0<s<n/2$ and $g\in \dot H^s(\mathbb{R}^n)$ with
\begin{equation}\label{condition}
(\frac{1}{2})^{\frac{n-2s}{n}}\mathcal{S}_{s,n}^{\frac{n}{2s}}\leq \|g\|^2_{\dot H^s(\mathbb{R}^n)}\leq (\frac{3}{2})^{\frac{n-2s}{n}}\mathcal{S}_{s,n}^{\frac{n}{2s}}.
\end{equation}
Then there exists a positive constant $G_{n,s}$ such that
$$\left\|(-\Delta)^{s}g-|g|^{\frac{4s}{n-2s}}g\right\|_{\dot H^{-s}(\mathbb{R}^n)}\geq G_{n,s}\inf_{h\in \mathcal{M}_s}\|g-h\|_{\dot H^{s}(\mathbb{R}^n)},$$
where $$ \mathcal{M}_s:=\left\{\pm d_{n,s}\left(\frac{\lambda}{1+\lambda^2|x-z|^2}\right)^{\frac{n-2s}{2}}:   z\in \mathbb{R}^n,\  \lambda>0 \right\}.$$
\end{theorem}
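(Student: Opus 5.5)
The plan is to deduce Theorem \ref{sob-stability} from Theorem \ref{stability} through the duality $f=|g|^{\frac{4s}{n-2s}}g$. Given $g\in\dot H^s$, set $f:=|g|^{\frac{4s}{n-2s}}g$. Then $f\in L^{\frac{2n}{n+2s}}$, $|f|^{-\frac{4s}{n+2s}}f=g$, and $\int|f|^{\frac{2n}{n+2s}}=\int|g|^{\frac{2n}{n-2s}}$. Set $\varepsilon:=\|(-\Delta)^s g-f\|_{\dot H^{-s}}$.

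\textbf{Step 1 (energy window).} We check that $f$ lies in the window of Theorem \ref{stability}, or in the enlarged window of Remark \ref{remark of thm}. If $\varepsilon$ is large, say $\varepsilon\ge\varepsilon_0$, the inequality is trivial: $\inf_h\|g-h\|_{\dot H^s}\le\|g\|_{\dot H^s}$ is bounded by the hypothesis \eqref{condition}. So we may assume $\varepsilon$ is small. Testing the equation against $g$ gives
\[
\Big|\|g\|_{\dot H^s}^2-\int|g|^{\frac{2n}{n-2s}}\Big|\le\varepsilon\|g\|_{\dot H^s}.
\]
Hence $\int|g|^{\frac{2n}{n-2s}}$ is close to $\|g\|_{\dot H^s}^2\in[(\tfrac12)^{\frac{n-2s}{n}}\mathcal S^{\frac n{2s}},(\tfrac32)^{\frac{n-2s}{n}}\mathcal S^{\frac n{2s}}]$. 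Combining this with the Sobolev inequality forces $\int|g|^{\frac{2n}{n-2s}}\ge\mathcal S^{\frac n{2s}}(1-o(1))$, which excludes zero-like behaviour. The upper bound is below $2\mathcal S^{\frac n{2s}}$ since $(3/2)^{(n-2s)/n}<2$. So $\int|f|^{\frac{2n}{n+2s}}\in[\alpha,\beta]\mathcal S^{\frac n{2s}}$ with $0<\alpha<1<\beta<2$, and Remark \ref{remark of thm} applies.

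\textbf{Step 2 (transfer the deficit).} We need
\[
\|(-\Delta)^{-s}(-\Delta)^sg-(-\Delta)^{-s}f\|_{L^{\frac{2n}{n-2s}}}\le C\|(-\Delta)^{-s}((-\Delta)^sg-f)\|_{\dot H^s}=C\varepsilon.
\]
This holds because $\|(-\Delta)^{-s}\phi\|_{\dot H^s}=\|\phi\|_{\dot H^{-s}}$ and by the Sobolev embedding. The left side equals $\||f|^{-\frac{4s}{n+2s}}f-(-\Delta)^{-s}f\|_{L^{\frac{2n}{n-2s}}}$. Theorem \ref{stability} then yields $h_0=\pm c_{n,s}(\cdot)\in\mathcal M_{HLS}$ with $\|f-h_0\|_{L^{\frac{2n}{n+2s}}}\le C\varepsilon$.

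\textbf{Step 3 (back to $\dot H^s$).} Let $U:=|h_0|^{-\frac{4s}{n+2s}}h_0=(-\Delta)^{-s}h_0\in\mathcal M_s$, with matching parameters. We write
\[
g-U=(-\Delta)^{-s}\big((-\Delta)^sg-f\big)+(-\Delta)^{-s}(f-h_0).
\]
The first term has $\dot H^s$ norm equal to $\varepsilon$. For the second, $\|(-\Delta)^{-s}\phi\|_{\dot H^s}=\|\phi\|_{\dot H^{-s}}\le C\|\phi\|_{L^{\frac{2n}{n+2s}}}$ by the dual Sobolev (HLS) inequality \eqref{eq-hls}. Therefore $\|g-U\|_{\dot H^s}\le(1+C)\varepsilon$, which is the desired inequality with $G_{n,s}=(1+C)^{-1}$ in the small-$\varepsilon$ regime. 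Combined with the trivial large-$\varepsilon$ regime, this proves the theorem.

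The main obstacle is Step 1: the energy window of Theorem \ref{stability} must be matched to the $\dot H^s$ window. Note that \eqref{condition} is calibrated precisely so that, up to $O(\varepsilon)$, $\int|f|^{\frac{2n}{n+2s}}$ stays strictly inside $(0,2\mathcal S^{\frac n{2s}})$ and away from zero. Remark \ref{remark of thm}'s flexibility with $\alpha,\beta$ is what absorbs the $O(\varepsilon)$ error. Steps 2 and 3 are linear and essentially free, because $(-\Delta)^{-s}$ is an isometry $\dot H^{-s}\to\dot H^s$ and the map $h\mapsto|h|^{-\frac{4s}{n+2s}}h$ sends $\mathcal M_{HLS}$ onto $\mathcal M_s$ as fixed points of $(-\Delta)^{-s}$.
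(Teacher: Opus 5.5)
Your argument is correct and follows the paper's route. You set $f=|g|^{\frac{4s}{n-2s}}g$, convert the $\dot H^{-s}$ deficit into the HLS deficit via the Sobolev embedding, apply Theorem \ref{stability}, and return to $\dot H^s$ through the HLS inequality and the triangle inequality with $(-\Delta)^{-s}h_0\in\mathcal M_s$.

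The only difference is the case split. The paper splits on whether $\int|f|^{\frac{2n}{n+2s}}\ge(\frac14)^{\frac{n-2s}{n+2s}}\mathcal S_{s,n}^{\frac n{2s}}$: in the low case it bounds the deficit below by $\|g\|_{\dot H^s}-\mathcal S_{s,n}^{-1/2}\|f\|_{L^{\frac{2n}{n+2s}}}$, and in the high case it needs Remark \ref{remark of thm}. You split on $\varepsilon$ instead, and the testing identity then puts $f$ in the original window $[\frac12,\frac32]\,\mathcal S_{s,n}^{\frac n{2s}}$, so the Remark is not actually needed.

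One small correction: $\inf_{h\in\mathcal M_s}\|g-h\|_{\dot H^s}\le\|g\|_{\dot H^s}$ is false in general, because $0\notin\mathcal M_s$. The correct bound is $\le\|g\|_{\dot H^s}+\mathcal S_{s,n}^{\frac n{4s}}$, which still handles the large-$\varepsilon$ regime.
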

\vskip0.3cm
\begin{remark}
De Nitti and K\"{o}nig's in \cite{NK}  obtain the stability for critical points of the fractional Sobolev inequality for non-negative functions. They also note that
the non-negativity assumption is not necessary when $s\in(0,1]$ since
the Struwe-type profile decompositions (\textbf{Theorem A}) holds for general functions when $s\in(0,1]$. In this paper, we will completely remove the non-negativity  condition for any $s\in (0, \frac{n}{2})$.
\end{remark}
\vskip0.3cm

Let us give a brief overview over the main ideas of the proof of Theorem \ref{stability}. Its basic strategy is to establish the global stability for critical points of the HLS inequality from the local stability and Struwe type of stability for the HLS inequality. We still choose to work on the sphere in order to apply the spherical harmonics technique, which can help us to derive the local stability. The main difficulty is the lack of Hilbert space structure since the metric is induced by $L^{\frac{2n}{n+2s}}$ norm. We develop a weak-decomposition-strong-stability technique in the structure for critical points of HLS inequality and decompose $u$ in $H^{-s}(\mathbb{S}^n)$ as $u=\phi+r$, where $r$ satisfies some orthogonality  condition. Then we try to prove the local stability with $L^{\frac{2n}{n+2s}}$ distance when $\|r\|_{L^{\frac{2n}{n+2s}}}$ is very small. By establishing the comparison theorem between $\inf\limits_{h\in M_{HLS}}\|u-h\|_{\frac{2n}{n+2s}}$ and $\|r\|_{\frac{2n}{n+2s}}$ (see Lemma \ref{infinitesimals}), we derive the local stability for critical points of the HLS inequality when $L^{\frac{2n}{n+2s}}$ distance is very small.
\medskip

The global stability for critical points of the HLS inequality follows from establishing a new Struwe decomposition for general functions rather than nonnegative functions. The main  reason why we can remove the nonnegativity assumption is that we can easily test the HLS integral equation $$|\psi|^{-\frac{4s}{n+2s}}\psi-(-\Delta)^{-s}\psi=0,\ \ x\in \mathbb{R}^n$$ with the positive part and negative part of $\psi$ to derive some energy identities which help us to prove that $\phi$ does not change signs. In contrast, for the fractional Sobolev equation
$$(-\Delta)^s\phi-|\phi|^{\frac{4s}{n-2s}}\phi=0,\ \ x\in \mathbb{R}^n,$$ this method fails since positive part or negative part of $\phi$ may not belong to $\dot{H}^s(\mathbb{R}^n)$ when $s>1$. Hence, we develop a duality framework for critical points of the Sobolev inequality and the HLS inequality to overcome this difficulty and obtain the stability and Struwe decomposition for critical points of the Sobolev inequality without non-negativity  assumption.

\medskip

This paper is organized as follows. Section 2 is devoted to proving the Struwe type decomposition for critical points of the HLS inequality and the Sobolev inequality without nonnegativity   assumption. In Section 3, we will establish the stability for critical point of the HLS and Sobolev inequalities without nonnegativity  assumption. In Section 4, we will provide some lemmas which will be needed in the proof of the main Theorems.

\section{Struwe type decomposition for critical points of HLS inequality and Sobolev inequality without non-negativity assumption }
In this section, we establish the Struwe type decomposition lemma for critical points of HLS inequality and Sobolev inequality without the non-negativity  assumption on functions under consideration. Denote by
$$U_H(z,\lambda)(x)=c_{n,s}\left(\frac{\lambda}{1+\lambda^2|x-z|^2}\right)^{\frac{n+2s}{2}}\in \mathcal{M}_{HLS}^{+},$$
we first establish
\vskip0.3cm
\begin{lemma}\label{deHLS}
For $0<s<\frac{n}{2}$ and $\{f_k\}_k\in L^{\frac{2n}{n+2s}}(\mathbb{R}^n)$ satisfies
$$\frac{1}{2}\mathcal{S}_{s,n}^{\frac{n}{2s}}\leq \int_{\mathbb{R}^n}|f_k|^{\frac{2n}{n+2s}}dx\leq \frac{3}{2}\mathcal{S}_{s,n}^{\frac{n}{2s}}$$
and
$$\lim\limits_{k\rightarrow +\infty}\left\||f_k|^{-\frac{4s}{n+2s}}f_k-(-\Delta)^{-s}f_k\right\|_{L^{\frac{2n}{n-2s}}(\mathbb{R}^n)}=0,$$
Then there exists a subsequence of $\{f_k\}_k$ (still denoted by $\{f_k\}_k$) and $\{z_k\}_k\subset \mathbb{R}^n$, $\{\lambda_k\}_k\subset \mathbb{R}^{+}$ such that
$$\lim\limits_{k\rightarrow +\infty}\Big(|f_k|^{-\frac{4s}{n+2s}}f_k-\beta U_{H}^{-\frac{4s}{n+2s}}(z_k,\lambda_k)U_{H}(z_k,\lambda_k), f_k-\beta U_{H}(z_k,\lambda_k)\Big)=0,$$
where $\beta$ is equal to $+1$ or $-1$. Furthermore, we can derive that
$$\lim_{k\rightarrow +\infty}\int_{\mathbb{R}^n}|f_k-\beta U_{H}|^{\frac{2n}{n+2s}}dx=0.$$
\end{lemma}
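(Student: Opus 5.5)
The plan is to pass to the dual (Sobolev) variable and run a concentration-compactness argument, then use the structure of the HLS equation to rule out sign-changing profiles and multiple bubbles. Set $p=\frac{2n}{n+2s}$, $p'=\frac{2n}{n-2s}$, and let $g_k=|f_k|^{p-2}f_k$, so that $\|g_k\|_{p'}^{p'}=\int|f_k|^p$. The hypothesis reads
\[
g_k-(-\Delta)^{-s}f_k=:\epsilon_k\to0 \quad\text{in } L^{p'}.
\]
\textbf{Step 1 (energy identities).} Pair the equation with $f_k$. This gives
\[
\int|f_k|^p=\|(-\Delta)^{-s/2}f_k\|_2^2+o(1).
\]
Combined with the HLS inequality \eqref{eq-hls}, this yields $E_k:=\int|f_k|^p\ge \mathcal S_{s,n}^{n/(2s)}+o(1)$. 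Next, pair the equation with $f_k^+$ and with $f_k^-$. Since $(-\Delta)^{-s}$ has a positive kernel, $(f_k^-,(-\Delta)^{-s}f_k^+)\ge0$, and therefore
\[
\int|f_k^\pm|^p\le \|(-\Delta)^{-s/2}f_k^\pm\|_2^2+o(1)\le \mathcal S_{s,n}^{-1}\Big(\int|f_k^\pm|^p\Big)^{2/p}+o(1).
\]
It follows that each nonzero sign part has energy at least $\mathcal S^{n/(2s)}+o(1)$. Because the total energy is at most $\frac32\mathcal S^{n/(2s)}$, exactly one sign part survives. Passing to a subsequence and possibly replacing $f_k$ by $-f_k$ (this fixes $\beta$), we get $\int|f_k^-|^p\to0$. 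We now have a nearly nonnegative Palais--Smale sequence whose energy lies in $[\mathcal S^{n/(2s)},\frac32\mathcal S^{n/(2s)}]$.

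\textbf{Step 2 (concentration and identification of the limit).} Apply Lions' concentration-compactness to $|f_k|^p$ and choose translations and dilations $z_k,\lambda_k$ so that the rescaled $\tilde f_k$ satisfies $\tilde f_k\rightharpoonup f\neq0$ weakly in $L^p$. Vanishing is excluded by the lower energy bound together with a refined HLS/Lorentz-type inequality. By compactness of $(-\Delta)^{-s}$ locally, $(-\Delta)^{-s}\tilde f_k\to(-\Delta)^{-s}f$ a.e. Then $\tilde g_k=(-\Delta)^{-s}\tilde f_k+\tilde\epsilon_k$ converges a.e. along a subsequence, which gives a.e. convergence of $\tilde f_k$. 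Hence $f$ solves \eqref{hls-equation}, and $f\ge0$ by Step 1. The classification of Chen--Li--Ou and Li then gives $f=U_H(0,1)$ after adjusting the parameters.

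\textbf{Step 3 (no remainder).} Brezis--Lieb splitting is available thanks to the a.e. convergence. It decomposes the energy as $\int|\tilde f_k|^p=\int U_H^p+\int|\tilde f_k-U_H|^p+o(1)$, where $\int U_H^p=\mathcal S^{n/(2s)}$, and it shows that the remainder $\tilde f_k-U_H$ is again a Palais--Smale sequence. The nonlinear part of this splitting is the step I expect to be the main obstacle: proving
\[
|\tilde f_k|^{p-2}\tilde f_k-U_H^{p-1}-|\tilde f_k-U_H|^{p-2}(\tilde f_k-U_H)\to0 \quad\text{in } L^{p'}.
\]
Since $p<2$, this map is only Hölder and not Lipschitz. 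I would handle it by a truncation argument, splitting into the region where $|\tilde f_k-U_H|$ is small relative to $U_H$ and its complement, and using the a.e. convergence together with Vitali's theorem. Once this is in place, Step 1 applied to the remainder shows that its energy is either $o(1)$ or at least $\mathcal S^{n/(2s)}$. The latter would push the total energy to at least $2\mathcal S^{n/(2s)}>\frac32\mathcal S^{n/(2s)}$, so the remainder energy is $o(1)$. This gives $\int|f_k-\beta U_H(z_k,\lambda_k)|^p\to0$.

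\textbf{Step 4 (the pairing statement).} The first limit in the lemma follows from Step 3. Writing $g_k-\beta U_H^{p-1}\to0$ in $L^{p'}$ by continuity of the duality map (Vitali/Brezis--Lieb again), Hölder's inequality then gives
\[
\Big(|f_k|^{p-2}f_k-\beta U_H^{p-1},\,f_k-\beta U_H\Big)\to0.
\]
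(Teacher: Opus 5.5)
Your proof is correct, but it runs in the opposite order from the paper's.

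\textbf{The paper's route.} The paper applies the $L^{\frac{2n}{n+2s}}$ profile decomposition of Lemma \ref{rohls}, whose remainders are small only in $\dot H^{-s}$. Using Lemma \ref{lemma-holder-1d} and the asymptotic orthogonality of the parameters, it shows each profile solves the HLS equation weakly. It discards all but one profile by the energy bound, and it excludes a sign change of $\psi_1$ by testing the \emph{limit} equation with $\psi_1^{\pm}$. This gives only $\|(-\Delta)^{-s/2}(f_k-\beta U_H(z_k,\lambda_k))\|_{L^2}\to 0$. The pairing in the statement is then shown to vanish by splitting it, via the equation for $U_H$, into a Palais--Smale term and two terms controlled by $\dot H^{-s}$ norms. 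Strong $L^{\frac{2n}{n+2s}}$ convergence comes last: $a^q-b^q\ge q a^{q-1}(a-b)$ and reverse H\"older show that the pairing dominates $\|f_k-\beta U_H\|_{L^{2n/(n+2s)}}^2$.

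\textbf{Your route.} You do the sign selection on the sequence itself, using the same $f^{\pm}$-testing idea with $o(1)$ errors. You need only one nonvanishing rescaling, and you get a.e.\ convergence from the equation through local compactness of the Riesz potential. Strong convergence then follows directly from Brezis--Lieb, the Palais--Smale property of the remainder, and energy quantization. The pairing becomes trivial: boundedness of $|f_k|^{p-2}f_k-\beta U_H^{p-1}$ in $L^{p'}$ already suffices.

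\textbf{What each buys.} The paper's route displays the ``weak decomposition, strong stability'' mechanism it reuses in Lemma \ref{infinitesimals}. However, it leans on the full profile decomposition with an exact $L^p$ energy identity, which is a delicate input for $p<2$ when the remainder is only $\dot H^{-s}$-small. Your route avoids that input and is more self-contained.

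\textbf{The nonlinear splitting.} The step you flag as the main obstacle is actually immediate. With $F(t)=|t|^{p-2}t$ and $w_k=\tilde f_k-U_H$, Lemma \ref{lemma-holder-1d} gives $|F(U_H+w_k)-F(w_k)|\le C|U_H|^{p-1}$. Hence $|F(U_H+w_k)-F(w_k)-F(U_H)|^{p'}\le C|U_H|^{p}$, an integrable majorant independent of $k$. The left side tends to $0$ a.e.\ since $w_k\to 0$ a.e., so dominated convergence finishes the step without any truncation.

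\textbf{The nonvanishing step.} This is where your sketch is thinnest. A Lorentz-type refinement does not by itself produce a nonzero weak limit after rescaling; you need a refinement with a scale-invariant weak norm. For instance, apply the Besov (G\'erard--Meyer--Oru) or Morrey (Palatucci--Pisante) refined Sobolev inequality to $u_k=(-\Delta)^{-s}f_k$. This sequence is bounded in $\dot H^s$ and satisfies $\|u_k\|_{p'}\ge \|g_k\|_{p'}-\|\epsilon_k\|_{p'}\ge c>0$. The refinement yields rescalings with $\tilde u_k\rightharpoonup u\neq 0$ in $\dot H^s$. Along a subsequence, $\tilde f_k\rightharpoonup f$ in $L^p$ with $(-\Delta)^{-s}f=u\neq 0$, so $f\neq 0$.
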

\vskip0.3cm
\begin{remark}\label{remark}
As we pointed out in the remark of Theorem \ref{stability}, the constant $\frac{1}{2}$ and $\frac{3}{2}$ here can be replaced by any $0<\alpha<1$ and $1<\beta<2$ respectively.
\end{remark}
In order to prove the above Struwe type decomposition lemma, we need the following profile decomposition result which can be found in \cite{BCK}
\begin{lemma}\label{rohls}
Assume $0<s<\frac{n}{2}$ and that $f_k$ is uniformly bounded in $L^{\frac{2n}{n+2s}}(\mathbb{R}^n)$. Then there exists a subsequence of $\{f_k\}_k$ (still denoted by $\{f_k\}_k$) such that $f_k$ can be written as
$$f_k=\sum_{j=1}^{l}\big(h_k^{j}\big)^{-\frac{n+2s}{2}}\psi_j(\frac{\cdot-x_k^j}{h_k^j})+r_k^{l},$$
where $\psi_j\in L^{\frac{2n}{n+2s}}(\mathbb{R}^n)$, $h_k^{j}\in \mathbb{R}^{+}$ and $x_{k}^{j}\in \mathbb{R}^n$ satisfying
$$\lim\limits_{k\rightarrow +\infty}\big(|\log \frac{h_k^{i}}{h_k^{j}}| +\frac{|x_k^{i}-x_k^{j}|}{h_k^{i}}\big)= +\infty,\  i\neq j.$$
and the remainder term $r_k^{l}$ satisfies $\lim\limits_{l\rightarrow +\infty}\lim\limits_{k\rightarrow +\infty}\|r_k^{l}\|_{H^{-s}(\mathbb{R}^n)}=0$. Moreover, $$\|f_k\|_{L^{\frac{2n}{n+2s}}(\mathbb{R}^n)}^{\frac{2n}{n+2s}}=\sum_{j=1}^{l}\|\psi_j\|_{L^{\frac{2n}{n+2s}}(\mathbb{R}^n)}^{\frac{2n}{n+2s}}+\|r_{k}^{l}\|_{L^{\frac{2n}{n+2s}}(\mathbb{R}^n)}^{\frac{2n}{n+2s}}+o_l(1)$$
as $l\rightarrow +\infty$.
\end{lemma}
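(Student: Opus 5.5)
The plan is to run the standard concentration--compactness extraction scheme in the dual (negative-regularity) setting. I will use the symmetry group of the HLS inequality, $g_{h,x}\psi=h^{-\frac{n+2s}{2}}\psi(\frac{\cdot-x}{h})$, which acts isometrically both on $L^{\frac{2n}{n+2s}}$ and on $\dot H^{-s}$. Profiles will be extracted as weak limits of $g_{h_k,x_k}^{-1}f_k$ along well-chosen sequences of scales and centres.

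\textbf{Step 1: a refined HLS inequality that detects concentration.} The key tool is an improved inequality of the form
$$\|(-\Delta)^{-s/2}f\|_{L^2}\le C\|f\|_{L^{\frac{2n}{n+2s}}}^{\theta}\,\|f\|_{\dot B^{-\frac{n+2s}{2}}_{\infty,\infty}}^{1-\theta},\qquad 0<\theta<1 .$$
It is the dual of the Gérard--Meyer--Oru refined Sobolev inequality, and I would prove it with a Littlewood--Paley decomposition. Its role is this: if $\|f_k\|_{\dot H^{-s}}$ stays away from $0$, then $\|f_k\|_{\dot B^{-\frac{n+2s}{2}}_{\infty,\infty}}\ge\varepsilon$. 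Hence there are dyadic scales $2^{j_k}$ and points $x_k$ with $2^{-j_k\frac{n+2s}{2}}|\Delta_{j_k}f_k(x_k)|\ge \varepsilon/2$. Setting $h_k=2^{-j_k}$, the sequence $g_{h_k,x_k}^{-1}f_k$ is bounded in $L^{\frac{2n}{n+2s}}$. It therefore has a weak limit $\psi$, and pairing with a fixed Schwartz test function (the kernel of $\Delta_0$) shows $\psi\neq0$. Moreover $\|\psi\|_{L^{\frac{2n}{n+2s}}}$ is bounded below by a power of $\varepsilon$.

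\textbf{Step 2: iteration and orthogonality of the cores.} I set $r_k^1=f_k-g_{h_k^1,x_k^1}\psi_1$ and repeat the extraction on $r_k^l$. At each stage I choose the profile to have almost maximal size among all possible weak limits. Orthogonality of the sequences $(h_k^j,x_k^j)$ follows in the usual way. If two of them were not orthogonal, then, after passing to a subsequence, $g_{h_k^i,x_k^i}^{-1}g_{h_k^j,x_k^j}$ would converge strongly to a fixed group element. The weak limit of the rescaled remainder would then be nonzero, contradicting the fact that $g^{-1}_{h_k^i,x_k^i}r_k^i\rightharpoonup0$ by construction.

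\textbf{Step 3: energy splitting.} This is the step I expect to be the main obstacle, because $p=\frac{2n}{n+2s}\neq2$. In $\dot H^{-s}$, Pythagorean splitting comes for free from weak convergence, but in $L^p$ weak convergence alone does not give $\|a+b_k\|_p^p=\|a\|_p^p+\|b_k\|_p^p+o(1)$. My first attempt is to work in rescaled coordinates adapted to each profile. I would approximate $\psi_j$ in $L^p$ by $\tilde\psi_j\in C_c^\infty$, and then use orthogonality of the cores to show that the rescaled copies $g_{h_k^j,x_k^j}\tilde\psi_j$ have asymptotically disjoint essential supports, so the cross terms vanish. For the interaction between a profile and the remainder, I would establish a.e. convergence of $g^{-1}_{h_k^j,x_k^j}r_k^{l}$ to $0$ and apply the Brezis--Lieb lemma. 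If a.e. convergence fails, the fallback is to extract the profiles using a local-in-space, Lebesgue-point refinement of Step 1 (the approach of \cite{BCK}), which yields the needed pointwise control.

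Combining the pieces, the sum $\sum_j\|\psi_j\|_p^p$ is bounded by $\sup_k\|f_k\|_p^p$, so $\|\psi_l\|_p\to0$ as $l\to\infty$. By the almost-maximal choice of profiles and Step 1, this forces $\limsup_k\|r_k^l\|_{\dot B^{-\frac{n+2s}{2}}_{\infty,\infty}}\to0$. The refined inequality then gives $\lim_{l}\lim_k\|r_k^l\|_{H^{-s}}=0$.
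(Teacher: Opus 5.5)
Your Steps 1--2 and the closing argument are a legitimate route, different from the paper's. The paper does not prove this lemma at all; it quotes the wavelet-based decomposition of \cite{BCK}. Your refined inequality is also correct. It follows most quickly by duality from G\'erard--Meyer--Oru: $\|f\|_{\dot H^{-s}}^2\le\|f\|_{L^p}\|(-\Delta)^{-s}f\|_{L^{p'}}$ with $p=\frac{2n}{n+2s}$, which gives $\theta=\frac{n}{n+2s}$.

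The genuine gap is Step 3, and it cannot be closed as planned. For $p\in(1,2)$ the exact identity $\|f_k\|_p^p=\sum_{j\le l}\|\psi_j\|_p^p+\|r_k^l\|_p^p+o_l(1)$ is false. The remainder can carry oscillations that the dilation--translation group does not see and that $\dot H^{-s}$ does not detect. So the a.e.\ convergence needed for Brezis--Lieb fails, and no Lebesgue-point refinement of the extraction can help.

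Concretely, take $f_k=\phi\,(1+\sin(kx_1))$ with $0\neq\phi\in C_c^\infty(\mathbb{R}^n)$.
\begin{itemize}
\item In any decomposition with orthogonal cores and $\lim_l\lim_k\|r_k^l\|_{\dot H^{-s}}=0$, the profiles are forced to be weak limits of rescalings of $f_k$. Indeed, if $g^{-1}_{h_k^j,x_k^j}f_k\rightharpoonup w_j$, then $g^{-1}_{h_k^j,x_k^j}r_k^l\rightharpoonup w_j-\psi_j$ for $l\ge j$. Weak lower semicontinuity of the invariant $\dot H^{-s}$ norm then gives $w_j=\psi_j$.
\item Here all such weak limits vanish except along parameters tending to a fixed $(h,x)$. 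So after normalization $\psi_1=\phi$, $\psi_j=0$ for $j\ge2$, and $r_k^l=\phi\sin(kx_1)+o_{L^p}(1)$, whose $\dot H^{-s}$ norm does tend to $0$.
\item However, $\|f_k\|_p^p\to A_p\|\phi\|_p^p$ while $\|\psi_1\|_p^p+\|r_k^l\|_p^p\to B_p\|\phi\|_p^p$. Here $A_p=\frac1{2\pi}\int_0^{2\pi}|1+\sin t|^p\,dt$ and $B_p=1+\frac1{2\pi}\int_0^{2\pi}|\sin t|^p\,dt$.
\item The elementary (Clarkson) inequality $|1+x|^p+|1-x|^p<2(1+|x|^p)$, valid for $1<p<2$ and $x\neq0$, gives $A_p<B_p$. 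This defect is independent of $l$.
\end{itemize}

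What your method does yield is an inequality in place of the identity. That is all your final paragraph needs, and all that Lemma \ref{deHLS} uses: two nonzero critical profiles would force energy at least $2\mathcal S_{s,n}^{n/2s}$.
\begin{itemize}
\item First, $\|\sum_{j\le l}g_{h_k^j,x_k^j}\psi_j\|_p^p\to\sum_{j\le l}\|\psi_j\|_p^p$. The first half of your Step 3 (compactly supported approximants with asymptotically disjoint cores) proves this.
\item Second, $\liminf_k\|f_k\|_p^p\ge\sum_{j\le l}\|\psi_j\|_p^p$, obtained by duality instead of Brezis--Lieb. Pair $f_k$ with $G_k=\sum_{j\le l}(h_k^j)^{-\frac{n-2s}{2}}\eta_j\big(\frac{\cdot-x_k^j}{h_k^j}\big)$, where $\eta_j=|\tilde\psi_j|^{p-2}\tilde\psi_j$ and $\tilde\psi_j\in C_c^\infty$ approximates $\psi_j$ in $L^p$. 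Then $\langle f_k,G_k\rangle\to\sum_j\langle\psi_j,\eta_j\rangle$ and $\|G_k\|_{p'}^{p'}\to\sum_j\|\tilde\psi_j\|_p^p$.
\item Third, these two facts give $\limsup_k\|r_k^l\|_p\le2\sup_k\|f_k\|_p$. This is the uniform bound you need when you apply the refined inequality to $r_k^l$.
\end{itemize}
With these three facts in place of Step 3, your argument proves the decomposition with $\sum_j\|\psi_j\|_p^p\le\liminf_k\|f_k\|_p^p$ and $\lim_l\lim_k\|r_k^l\|_{\dot H^{-s}}=0$. It does not prove the stated energy identity, and no construction can.
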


Now, let us prove the Struwe type decomposition lemma for critical points of the HLS inequality.
\begin{proof}
By Lemma \ref{rohls},  there exist $\psi_j\in L^{\frac{2n}{n+2s}}(\mathbb{R}^n)$ and $h_k^{j}\in \mathbb{R}^{+}$ and $x_{k}^{j}\in \mathbb{R}^n$ satisfying
\begin{equation}\begin{split}\label{split}
\lim\limits_{k\rightarrow +\infty}\big(|\log \frac{h_k^{i}}{h_k^{j}}|+\frac{|x_k^{i}-x_k^{j}|}{h_k^{i}}\big)= +\infty,\ \ \forall i\neq j.
\end{split}\end{equation}
such that $f_k$ can be written as
$$f_k=\sum_{j=1}^{l}\big(h_k^{j}\big)^{-\frac{n+2s}{2}}\psi_j(\frac{\cdot-x_k^j}{h_k^j})+r_k^{l}$$
and
$$\|f_k\|_{L^{\frac{2n}{n+2s}}(\mathbb{R}^n)}^2=\sum_{j=1}^{l}\|\psi_j\|_{L^{\frac{2n}{n+2s}}(\mathbb{R}^n)}^2+\|r_{k}^{l}\|_{L^{\frac{2n}{n+2s}}(\mathbb{R}^n)}^2+o_l(1).$$
Since $\|f_k\|_{L^{\frac{2n}{n+2s}}(\mathbb{R}^n)}^2$ is bounded from below, then not all $\psi_j$ are zero. So, we may assume that $\psi_1\neq 0$. Since $f_k$ is a Palais-Smale sequence for the HLS integral equation:
\begin{equation}\label{ps}\lim\limits_{k\rightarrow +\infty}\left\||f_k|^{-\frac{4s}{n+2s}}f_k-(-\Delta)^{-s}f_k\right\|_{L^{\frac{2n}{n-2s}}(\mathbb{R}^n)}=0.
\end{equation}
We first claim that $\psi_1$ must satisfy the equation $$|\psi_1|^{-\frac{4s}{n+2s}}\psi_1-(-\Delta)^{-s}\psi_1=0$$
in the weak sense.
Indeed, fix a test function $\phi$, then for $\phi_k^1=(h_k^1)^{-\frac{n+2s}{2}}\phi(\frac{.-x_k^1}{h_k^1})$, through \eqref{ps}, we get
\begin{equation}\begin{split}
o_k(1)&=\big(|f_k|^{-\frac{4s}{n+2s}}f_k-(-\Delta)^{-s}f_k, \phi_k^1\big).\\
\end{split}\end{equation}
If we can prove that
\begin{equation}\label{key-inequality}\begin{split}
&\lim\limits_{k\rightarrow +\infty}\big(|f_k|^{-\frac{4s}{n+2s}}f_k-(-\Delta)^{-s}f_k, \phi_k^1\big)\\
&\ \ =\lim\limits_{k\rightarrow +\infty} \Big(|(h_k^1)^{-\frac{n+2s}{2}}\psi_1(\frac{.-x_k^1}{h_k^1})|^{-\frac{4s}{n+2s}}(h_k^1)^{-\frac{n+2s}{2}}\psi_1(\frac{.-x_k^1}{h_k^1})-(-\Delta)^{-s}\big((h_k^1)^{-\frac{n+2s}{2}}\psi_1(\frac{.-x_k^1}{h_k^1})\big),\phi_k^1\Big)\\
&\ \ = \Big(|\psi_1|^{-\frac{4s}{n+2s}}\psi_1-(-\Delta)^{-s}\psi_1, \phi\Big),
\end{split}\end{equation}
we can directly get $\psi_1$ satisfies equation $|\psi_1|^{-\frac{4s}{n+2s}}\psi_1-(-\Delta)^{-s}\psi_1=0$ in the weak sense. Let $F(t)=|t|^{\frac{-4s}{n+2s}}t$, using the elementary inequality ( The proof is presented in the Appendix) $$F(a+b)-C_{n,s}|b|^{\frac{n-2s}{n+2s}} \leq F(a)\leq F(a+b)+C_{n,s}|b|^{\frac{n-2s}{n+2s}},$$
we derive
\begin{equation*}\begin{split}
&\big(|f_k|^{-\frac{4s}{n+2s}}f_k-(-\Delta)^{-s}f_k, \phi_k^1\big)\\
&\ \ =\big(|\sum_{j=1}^{l}\big(h_k^{j}\big)^{-\frac{n+2s}{2}}\psi_j(\frac{\cdot-x_k^j}{h_k^j})+r_k^{l}|^{-\frac{4s}{n+2s}}(\sum_{j=1}^{l}\big(h_k^{j}\big)^{-\frac{n+2s}{2}}\psi_j(\frac{\cdot-x_k^j}{h_k^j})+r_k^{l}), \phi_k^1\big)\\
&\ \ \ \ -\big((-\Delta)^{-s}(\sum_{j=1}^{l}\big(h_k^{j}\big)^{-\frac{n+2s}{2}}\psi_j(\frac{\cdot-x_k^j}{h_k^j})+r_k^{l}), \phi_k^1\big)\\
&\ \ \leq \big(|\big(h_k^{1}\big)^{-\frac{n+2s}{2}}\psi_1(\frac{\cdot-x_k^1}{h_k^1})|^{-\frac{4s}{n+2s}}(\big(h_k^{1}\big)^{-\frac{n+2s}{2}}\psi_1(\frac{\cdot-x_k^j}{h_k^1})), \phi_k^1\big) \\
&\ \ \ \ +C_{n,s}\big(|\sum_{j\neq 1}\big(h_k^{j}\big)^{-\frac{n+2s}{2}}\psi_j(\frac{\cdot-x_k^j}{h_k^j})+r_k^{l}|^{\frac{n-2s}{n+2s}}, |\phi_k^1|\big)-\big((-\Delta)^{-s}(\sum_{j=1}^{l}\big(h_k^{j}\big)^{-\frac{n+2s}{2}}\psi_j(\frac{\cdot-x_k^j}{h_k^j})+r_k^{l}), \phi_k^1\big)
\end{split}\end{equation*}
and

\begin{equation*}\begin{split}
&\big(|f_k|^{-\frac{4s}{n+2s}}f_k-(-\Delta)^{-s}f_k, \phi_k^1\big)\\
&\ \ \geq \big(|\big(h_k^{1}\big)^{-\frac{n+2s}{2}}\psi_1(\frac{\cdot-x_k^1}{h_k^1})|^{-\frac{4s}{n+2s}}(\big(h_k^{1}\big)^{-\frac{n+2s}{2}}\psi_1(\frac{\cdot-x_k^j}{h_k^1})), \phi_k^1\big) \\
&\ \ \ \ -C_{n,s}\big(|\sum_{j\neq 1}\big(h_k^{j}\big)^{-\frac{n+2s}{2}}\psi_j(\frac{\cdot-x_k^j}{h_k^j})+r_k^{l}|^{\frac{n-2s}{n+2s}}, |\phi_k^1|\big)-\big((-\Delta)^{-s}(\sum_{j=1}^{l}\big(h_k^{j}\big)^{-\frac{n+2s}{2}}\psi_j(\frac{\cdot-x_k^j}{h_k^j})+r_k^{l}), \phi_k^1\big)
\end{split}\end{equation*}

Hence, in order to prove inequality \eqref{key-inequality}, it suffices to prove that for $j\neq 1$,
\begin{equation}\label{str1}
\lim\limits_{k\rightarrow +\infty}\int_{\mathbb{R}^n}|(h_k^j)^{-\frac{n+2s}{2}}\psi_j(\frac{.-x_k^j}{h_k^j})|^{-\frac{4s}{n+2s}}|(h_k^j)^{-\frac{n+2s}{2}}\psi_j(\frac{.-x_k^j}{h_k^j})\phi_k^1|dx=0
\end{equation}
\begin{equation}\label{str2}
\lim\limits_{k\rightarrow +\infty}\int_{\mathbb{R}^n}(-\Delta)^{-s}\big((h_k^j)^{-\frac{n+2s}{2}}\psi_j(\frac{.-x_k^j}{h_k^j})\big)\phi_k^1dx=0.
\end{equation}
According to $$\lim\limits_{k\rightarrow +\infty} \big(|\log \frac{h_k^{i}}{h_k^{j}}| +\frac{|x_k^{i}-x_k^{j}|}{h_k^{i}}\big)= +\infty,\  i\neq j,$$
we have $\lim\limits_{k\rightarrow +\infty}|\log \frac{h_k^1}{h_k^j}|=+\infty$ or $\lim\limits_{k\rightarrow +\infty}\frac{|x_k^{1}-x_k^{j}|}{h_k^{1}}= +\infty$ for $j\neq 1$.
We may first assume that $\lim\limits_{k\rightarrow +\infty}|\log \frac{h_k^1}{h_k^j}|=+\infty$, we distinguish between the case $\lim\limits_{k\rightarrow +\infty}\frac{h_k^j}{h_k^1}=0$ and the case $\lim\limits_{k\rightarrow +\infty}\frac{h_k^1}{h_k^j}=0$.
\vskip0.1cm

Case 1: $\lim\limits_{k\rightarrow +\infty}\frac{h_k^j}{h_k^1}=0$. Careful calculation gives that
\begin{equation*}\begin{split}
&\int_{\mathbb{R}^n}|(h_k^j)^{-\frac{n+2s}{2}}\psi_j(\frac{.-x_k^j}{h_k^j})|^{\frac{n-2s}{n+2s}}|(h_k^1)^{-\frac{n+2s}{2}}\phi(\frac{.-x_k^1}{h_k^1})|dx\\
&\ \ =\int_{\mathbb{R}^n} (\frac{h_k^j}{h_k^1})^{\frac{n+2s}{2}}|\psi_j(y)|^{\frac{n-2s}{n+2s}}\phi(\frac{x_k^j-x_k^1}{h_k^1}+y \frac{h_k^j}{h_k^1})dy\\
& \ \ \leq \Big(\int_{|y|>R}|\psi_j(y)|^{\frac{2n}{n+2s}}dy\Big)^{\frac{n-2s}{2n}}\Big(\int_{\mathbb{R}^n}(\frac{h_k^j}{h_k^1})^n |\phi(\frac{x_k^j-x_k^1}{h_k^1}+y \frac{h_k^j}{h_k^1})|^{\frac{2n}{n+2s}}dy\Big)^{\frac{n+2s}{2n}}\\
&\ \ \ \ +\Big(\int_{\mathbb{R}^n}|\psi_j(y)|^{\frac{2n}{n+2s}}dy\Big)^{\frac{n-2s}{2n}}\Big(\int_{|y|<R}(\frac{h_k^j}{h_k^1})^n |\phi(\frac{x_k^j-x_k^1}{h_k^1}+y \frac{h_k^j}{h_k^1})|^{\frac{2n}{n+2s}}dy\Big)^{\frac{n+2s}{2n}}\\
&\ \ = \Big(\int_{|y|>R}|\psi_j(y)|^{\frac{2n}{n+2s}}dy\Big)^{\frac{n-2s}{2n}}\|\phi\|_{L^{\frac{2n}{n+2s}}(\mathbb{R}^n)}^{\frac{n-2s}{n+2s}}\\
&\ \ \ \ +\|\psi_j\|^{\frac{n-2s}{n+2s}}_{L^{\frac{2n}{n+2s}}(\mathbb{R}^n)}\Big(\int_{|y|<R}(\frac{h_k^j}{h_k^1})^n |\phi(\frac{x_k^j-x_k^1}{h_k^1}+y \frac{h_k^j}{h_k^1})|^{\frac{2n}{n+2s}}dy\Big)^{\frac{n+2s}{2n}}.\\
\end{split}\end{equation*}
For $\int_{|y|>R}|\psi_j(y)|^{\frac{2n}{n+2s}}dy$, absolute continuity and $L^{\frac{2n}{n+2s}}(\mathbb{R}^n)$ integrability of $\psi_j$ leads to
$$\lim\limits_{R\rightarrow +\infty}\lim\limits_{k\rightarrow +\infty}\int_{|y|>R}|\psi_j(y)|^{\frac{2n}{n+2s}}dy=0.$$ For
\begin{equation}\label{inter}
\int_{|y|<R}(\frac{h_k^j}{h_k^1})^n |\phi(\frac{x_k^j-x_k^1}{h_k^1}+y \frac{h_k^j}{h_k^1})|^{\frac{2n}{n+2s}}dy,
\end{equation}
if $\lim\limits_{k\rightarrow +\infty}\frac{|x_k^{1}-x_k^{j}|}{h_k^{1}}= +\infty$, then the change of variable gives that
$$\lim\limits_{k\rightarrow +\infty}\int_{|y|<R}(\frac{h_k^j}{h_k^1})^n |\phi(\frac{x_k^j-x_k^1}{h_k^1}+y \frac{h_k^j}{h_k^1})|^{\frac{2n}{n+2s}}dy\leq
\lim\limits_{k\rightarrow +\infty}\int_{|z|\geq \frac{x_k^j-x_k^1}{h_k^1}-\frac{h_k^j}{h_k^1}R}|\phi(z)|^{\frac{2n}{n+2s}}dz=0;$$
if $\frac{|x_k^{1}-x_k^{j}|}{h_k^{1}}$ is bounded, the change of variable gives that
$$\lim\limits_{k\rightarrow +\infty}\int_{|y|<R}(\frac{h_k^j}{h_k^1})^n |\phi(\frac{x_k^j-x_k^1}{h_k^1}+y \frac{h_k^j}{h_k^1})|^{\frac{2n}{n+2s}}dy\leq
\lim\limits_{k\rightarrow +\infty}\int_{|z-\frac{x_k^j-x_k^1}{h_k^1}|\leq\frac{h_k^j}{h_k^1}R}|\phi(z)|^{\frac{2n}{n+2s}}dz=0.$$
Hence, we derive that
\begin{equation*}\begin{split}
\lim\limits_{k\rightarrow +\infty}\int_{\mathbb{R}^n}|(h_k^j)^{-\frac{n+2s}{2}}\psi_j(\frac{.-x_k^j}{h_k^j})|^{\frac{n-2s}{n+2s}}|(h_k^1)^{-\frac{n+2s}{2}}\phi(\frac{.-x_k^1}{h_k^1})|dx=0.
\end{split}\end{equation*}

Case 2: $\lim\limits_{k\rightarrow +\infty}\frac{h_k^1}{h_k^j}=0$. Careful calculation gives that
\begin{equation*}\begin{split}
&\int_{\mathbb{R}^n}|(h_k^j)^{-\frac{n+2s}{2}}\psi_j(\frac{.-x_k^j}{h_k^j})|^{\frac{n-2s}{n+2s}}|(h_k^1)^{-\frac{n+2s}{2}}\phi(\frac{.-x_k^1}{h_k^1})|dx\\
&\ \ =\int_{\mathbb{R}^n} (\frac{h_k^1}{h_k^j})^{\frac{n-2s}{2}}|\psi_j(\frac{x_k^j-x_k^1}{h_k^j}+y \frac{h_k^1}{h_k^j})|^{\frac{n-2s}{n+2s}}\phi(y)dy\\
& \ \ \leq \Big(\int_{|y|>R}|\phi(y)|^{\frac{2n}{n+2s}}dy\Big)^{\frac{n+2s}{2n}}\Big(\int_{\mathbb{R}^n}(\frac{h_k^1}{h_k^j})^n |\psi_j(\frac{x_k^j-x_k^1}{h_k^j}+y \frac{h_k^1}{h_k^j})|^{\frac{2n}{n+2s}}dy\Big)^{\frac{n-2s}{2n}}\\
&\ \ \ \ +\Big(\int_{\mathbb{R}^n}|\phi(y)|^{\frac{2n}{n+2s}}dy\Big)^{\frac{n+2s}{2n}}\Big(\int_{|y|<R}(\frac{h_k^1}{h_k^j})^n |\psi_j(\frac{x_k^j-x_k^1}{h_k^j}+y \frac{h_k^1}{h_k^j})|^{\frac{2n}{n+2s}}dy\Big)^{\frac{n-2s}{2n}}\\
&\ \ = \Big(\int_{|y|>R}|\phi(y)|^{\frac{2n}{n+2s}}dy\Big)^{\frac{n+2s}{2n}}\|\psi_j\|_{L^{\frac{2n}{n+2s}}(\mathbb{R}^n)}^{\frac{n-2s}{n+2s}}\\
&\ \ \ \ +\|\phi\|^{\frac{2n}{n+2s}}_{L^{\frac{2n}{n+2s}}(\mathbb{R}^n)}\Big(\int_{|y|<R}(\frac{h_k^1}{h_k^j})^n |\psi_j(\frac{x_k^j-x_k^1}{h_k^j}+y \frac{h_k^1}{h_k^j})|^{\frac{2n}{n+2s}}dy\Big)^{\frac{n-2s}{2n}}.\\
\end{split}\end{equation*}
For $\int_{|y|>R}|\phi(y)|^{\frac{2n}{n+2s}}dy$, absolute continuity and $L^{\frac{2n}{n+2s}}(\mathbb{R}^n)$ integrability of $\phi$ leads to
$$\lim\limits_{R\rightarrow +\infty}\lim\limits_{k\rightarrow +\infty}\int_{|y|>R}|\phi(y)|^{\frac{2n}{n+2s}}dy=0.$$ For
$$\int_{|y|<R}(\frac{h_k^1}{h_k^j})^n |\psi_j(\frac{x_k^j-x_k^1}{h_k^j}+y \frac{h_k^1}{h_k^j})|^{\frac{2n}{n+2s}}dy,$$
if $\lim\limits_{k\rightarrow +\infty}\frac{|x_k^{1}-x_k^{j}|}{h_k^{j}}= +\infty$, then the change of variable gives that
$$\lim\limits_{k\rightarrow +\infty}\int_{|y|<R}(\frac{h_k^1}{h_k^j})^n |\psi_j(\frac{x_k^j-x_k^1}{h_k^j}+y \frac{h_k^1}{h_k^j})|^{\frac{2n}{n+2s}}dy\leq
\lim\limits_{k\rightarrow +\infty}\int_{|z|\geq \frac{x_k^j-x_k^1}{h_k^j}-\frac{h_k^1}{h_k^j}R}|\psi_j(z)|^{\frac{2n}{n+2s}}dz=0;$$
if $\frac{|x_k^{1}-x_k^{j}|}{h_k^{1}}$ is bounded, the change of variable gives that
$$\lim\limits_{k\rightarrow +\infty}\int_{|y|<R}(\frac{h_k^j}{h_k^j})^n |\psi_j(\frac{x_k^j-x_k^1}{h_k^j}+y \frac{h_k^1}{h_k^j})|^{\frac{2n}{n+2s}}dy\leq
\lim\limits_{k\rightarrow +\infty}\int_{|z-\frac{x_k^j-x_k^1}{h_k^j}|\leq\frac{h_k^1}{h_k^j}R}|\psi_j(z)|^{\frac{2n}{n+2s}}dz=0.$$
Then
\begin{equation*}\begin{split}
\lim\limits_{k\rightarrow +\infty}\int_{\mathbb{R}^n}|(h_k^j)^{-\frac{n+2s}{2}}\psi_j(\frac{.-x_k^j}{h_k^j})|^{\frac{n-2s}{n+2s}}|(h_k^1)^{-\frac{n+2s}{2}}\phi(\frac{.-x_k^1}{h_k^1})|dx=0.
\end{split}\end{equation*}

Now, we assume that $\lim\limits_{k\rightarrow +\infty}|\log \frac{h_k^1}{h_k^j}|=+\infty$ does not hold, then $\lim\limits_{k\rightarrow +\infty}\frac{|x_k^{1}-x_k^{j}|}{h_k^{1}}= +\infty$ and $\frac{h_k^j}{h_k^1}$ is bounded for $j\neq 1$. Similar calculation gives that \begin{equation*}\begin{split}
&\int_{\mathbb{R}^n}|(h_k^j)^{-\frac{n+2s}{2}}\psi_j(\frac{.-x_k^j}{h_k^j})|^{\frac{n-2s}{n+2s}}|(h_k^1)^{-\frac{n+2s}{2}}\phi(\frac{.-x_k^1}{h_k^1})|dx\\
&\ \ =\int_{\mathbb{R}^n} (\frac{h_k^j}{h_k^1})^{\frac{n+2s}{2}}|\psi_j(y)|^{\frac{n-2s}{n+2s}}\phi(\frac{x_k^j-x_k^1}{h_k^1}+y \frac{h_k^j}{h_k^1})dy\\
& \ \ \leq \Big(\int_{|y|>R}|\psi_j(y)|^{\frac{2n}{n+2s}}dy\Big)^{\frac{n-2s}{2n}}\Big(\int_{\mathbb{R}^n}(\frac{h_k^j}{h_k^1})^n |\phi(\frac{x_k^j-x_k^1}{h_k^1}+y \frac{h_k^j}{h_k^1})|^{\frac{2n}{n+2s}}dy\Big)^{\frac{n+2s}{2n}}\\
&\ \ \ \ +\Big(\int_{\mathbb{R}^n}|\psi_j(y)|^{\frac{2n}{n+2s}}dy\Big)^{\frac{n-2s}{2n}}\Big(\int_{|y|<R}(\frac{h_k^j}{h_k^1})^n |\phi(\frac{x_k^j-x_k^1}{h_k^1}+y \frac{h_k^j}{h_k^1})|^{\frac{2n}{n+2s}}dy\Big)^{\frac{n+2s}{2n}}\\
&\ \ = \Big(\int_{|y|>R}|\psi_j(y)|^{\frac{2n}{n+2s}}dy\Big)^{\frac{n-2s}{2n}}\|\phi\|_{L^{\frac{2n}{n+2s}}(\mathbb{R}^n)}^{\frac{n-2s}{n+2s}}\\
&\ \ \ \ +\|\psi_j\|^{\frac{n-2s}{n+2s}}_{L^{\frac{2n}{n+2s}}(\mathbb{R}^n)}\Big(\int_{|y|<R}(\frac{h_k^j}{h_k^1})^n |\phi(\frac{x_k^j-x_k^1}{h_k^1}+y \frac{h_k^j}{h_k^1})|^{\frac{2n}{n+2s}}dy\Big)^{\frac{n+2s}{2n}}.\\
\end{split}\end{equation*}
For $\int_{|y|>R}|\psi_j(y)|^{\frac{2n}{n+2s}}dy$, absolute continuity and $L^{\frac{2n}{n+2s}}(\mathbb{R}^n)$ integrability of $\psi_j$ leads to
$$\lim\limits_{R\rightarrow +\infty}\lim\limits_{k\rightarrow +\infty}\int_{|y|>R}|\psi_j(y)|^{\frac{2n}{n+2s}}dy=0.$$ For
\begin{equation}\label{inter}
\int_{|y|<R}(\frac{h_k^j}{h_k^1})^n |\phi(\frac{x_k^j-x_k^1}{h_k^1}+y \frac{h_k^j}{h_k^1})|^{\frac{2n}{n+2s}}dy,
\end{equation}
the change of variable gives that
$$\lim\limits_{k\rightarrow +\infty}\int_{|y|<R}(\frac{h_k^j}{h_k^1})^n |\phi(\frac{x_k^j-x_k^1}{h_k^1}+y \frac{h_k^j}{h_k^1})|^{\frac{2n}{n+2s}}dy\leq
\lim\limits_{k\rightarrow +\infty}\int_{|z|\geq \frac{x_k^j-x_k^1}{h_k^1}-\frac{h_k^j}{h_k^1}R}|\phi(z)|^{\frac{2n}{n+2s}}dz=0$$
since $\frac{h_k^j}{h_k^1}$ is bounded. Hence, we also derive that
\begin{equation*}\begin{split}
\lim\limits_{k\rightarrow +\infty}\int_{\mathbb{R}^n}|(h_k^j)^{-\frac{n+2s}{2}}\psi_j(\frac{.-x_k^j}{h_k^j})|^{\frac{n-2s}{n+2s}}|(h_k^1)^{-\frac{n+2s}{2}}\phi(\frac{.-x_k^1}{h_k^1})|dx=0.
\end{split}\end{equation*}
In summary, we prove the equality \eqref{str1}. Now, we start to prove that $$\lim\limits_{k\rightarrow +\infty}\Big((-\Delta)^{-s}\big((h_k^j)^{-\frac{n+2s}{2}}\psi_j(\frac{.-x_k^j}{h_k^j})\big),\phi_k^1\Big)=0.$$
Careful calculation gives that
\begin{equation*}\begin{split}
&\Big((-\Delta)^{-s}\big((h_k^j)^{-\frac{n+2s}{2}}\psi_j(\frac{.-x_k^j}{h_k^j})\big),\phi_k^1\Big)\\
&\ \ =\frac{\Gamma(n/2-s)}{2^{2s}\pi^{n/2}\Gamma(s)}\int_{\mathbb{R}^n}\int_{\mathbb{R}^n}\big((h_k^j)^{-\frac{n+2s}{2}}\psi_j(\frac{x'-x_k^j}{h_k^j})\big)\frac{1}{|x-x'|^{n-2s}}(h_k^1)^{-\frac{n+2s}{2}}\phi(\frac{x-x_k^1}{h_k^1})dx'dx\\
&=\frac{\Gamma(n/2-s)}{2^{2s}\pi^{n/2}\Gamma(s)}\big(\frac{h_k^j}{h_k^1}\big)^{\frac{n+2s}{2}}\int_{\mathbb{R}^n}\int_{\mathbb{R}^n}\psi_j(y')\frac{1}{|y'-z'|^{n-2s}}\phi(\frac{h_k^j}{h_k^1}z'+\frac{x_k^j-x_k^1}{h_k^1})dy'dz'\\
&=\big(\frac{h_k^j}{h_k^1}\big)^{\frac{n+2s}{2}}\int_{\mathbb{R}^n}(-\Delta)^{-\frac{s}{2}}(\psi_j)(-\Delta)^{-\frac{s}{2}}(\phi(\frac{h_k^j}{h_k^1}.+\frac{x_k^j-x_k^1}{h_k^1}))dx\\
&\leq \Big(\int_{|x|>R}|(-\Delta)^{-\frac{s}{2}}\psi_j(x)|^2dx\Big)^{\frac{1}{2}}\int_{\mathbb{R}^n} \Big( \big(\frac{h_k^j}{h_k^1}\big)^{n}|(-\Delta)^{-\frac{s}{2}}\phi(\frac{h_k^j}{h_k^1}x+\frac{x_k^j-x_k^1}{h_k^1})|^2dx\Big)^{\frac{1}{2}}\\
&\ \ +\Big(\int_{\mathbb{R}^n}|(-\Delta)^{-\frac{s}{2}}\psi_j(x)|^2dx\Big)^{\frac{1}{2}}\int_{|x|<R} \Big( \big(\frac{h_k^j}{h_k^1}\big)^{n}|(-\Delta)^{-\frac{s}{2}}\phi(\frac{h_k^j}{h_k^1}x+\frac{x_k^j-x_k^1}{h_k^1})|^2dx\Big)^{\frac{1}{2}}\\
&\leq \Big(\int_{|x|>R}|(-\Delta)^{-\frac{s}{2}}\psi_j(x)|^2dx\Big)^{\frac{1}{2}} \|(-\Delta)^{-\frac{s}{2}}\phi\|_{L^2(\mathbb{R}^n)}\\
&\ \ +\|(-\Delta)^{-\frac{s}{2}}\psi_j\|_{L^2(\mathbb{R}^n)}\int_{|x|<R} \Big( \big(\frac{h_k^j}{h_k^1}\big)^{n}|(-\Delta)^{-\frac{s}{2}}\phi(\frac{h_k^j}{h_k^1}x+\frac{x_k^j-x_k^1}{h_k^1})|^2dx\Big)^{\frac{1}{2}}\\
\end{split}\end{equation*}
and \begin{equation*}\begin{split}
&\Big((-\Delta)^{-s}\big((h_k^j)^{-\frac{n+2s}{2}}\psi_j(\frac{.-x_k^j}{h_k^j})\big),\phi_k^1\Big)\\
&\leq \Big(\int_{|x|>R}|(-\Delta)^{-\frac{s}{2}}\phi(x)|^2dx\Big)^{\frac{1}{2}}\int_{\mathbb{R}^n} \Big( \big(\frac{h_k^1}{h_k^j}\big)^{n}|(-\Delta)^{-\frac{s}{2}}\psi_j(\frac{h_k^1}{h_k^j}x+\frac{x_k^j-x_k^1}{h_k^j})|^2dx\Big)^{\frac{1}{2}}\\
&\ \ +\Big(\int_{\mathbb{R}^n}|(-\Delta)^{-\frac{s}{2}}\phi(x)|^2dx\Big)^{\frac{1}{2}}\int_{|x|<R} \Big( \big(\frac{h_k^1}{h_k^j}\big)^{n}|(-\Delta)^{-\frac{s}{2}}\psi_j(\frac{h_k^1}{h_k^j}x+\frac{x_k^j-x_k^1}{h_k^j})|^2dx\Big)^{\frac{1}{2}}\\
&\leq \Big(\int_{|x|>R}|(-\Delta)^{-\frac{s}{2}}\phi(x)|^2dx\Big)^{\frac{1}{2}} \|(-\Delta)^{-\frac{s}{2}}\psi_j\|_{L^2(\mathbb{R}^n)}\\
&\ \ +\|(-\Delta)^{-\frac{s}{2}}\phi\|_{L^2(\mathbb{R}^n)}\int_{|x|<R} \Big( \big(\frac{h_k^1}{h_k^j}\big)^{n}|(-\Delta)^{-\frac{s}{2}}\psi_j(\frac{h_k^1}{h_k^j}x+\frac{x_k^j-x_k^1}{h_k^j})|^2dx\Big)^{\frac{1}{2}}\\
\end{split}\end{equation*}
Similar to the proof of equality \eqref{str1}, we can conclude that $$\lim\limits_{k\rightarrow +\infty}\Big((-\Delta)^{-s}\big((h_k^j)^{-\frac{n+2s}{2}}\psi_j(\frac{.-x_k^j}{h_k^j})\big),\phi_k^1\Big)=0.$$
All in all, we conclude that $\psi_1$ satisfies equation $|\psi_1|^{-\frac{4s}{n+2s}}\psi_1-(-\Delta)^{-s}\psi_1=0$. Similarly, we can also show that $\psi_i$ satisfies equation $|\psi_j|^{-\frac{4s}{n+2s}}\psi_j-(-\Delta)^{-s}\psi_j=0$. Now, we claim that $\psi_j=0$ for $j\neq 1$. Indeed,
$$\mathcal{S}_{s,n}\leq \frac{\|\psi_j\|_{L^{\frac{2n}{n+2s}}(\mathbb{R}^n)}^2}{\|(-\Delta)^{-\frac{s}{2}}\psi_j\|_{L^2(\mathbb{R}^n)}^2}= \frac{\|\psi_j\|_{L^{\frac{2n}{n+2s}}(\mathbb{R}^n)}^2}{\|\psi_j\|_{L^{\frac{2n}{n+2s}}(\mathbb{R}^n)}^{\frac{2n}{n+2s}}},$$
this leads to $$\|\psi_j\|_{L^{\frac{2n}{n+2s}}(\mathbb{R}^n)}^{\frac{2n}{n+2s}}\geq \mathcal{S}_{s,n}^{\frac{n}{2s}}$$
Hence
\begin{equation*}\begin{split}
\|f_k\|_{L^{\frac{2n}{n+2s}}(\mathbb{R}^n)}^{\frac{2n}{n+2s}}&=\sum_{j=1}^{l}\|\psi_j\|_{L^{\frac{2n}{n+2s}}(\mathbb{R}^n)}^{\frac{2n}{n+2s}}+\|r_{k}^{l}\|_{L^{\frac{2n}{n+2s}}(\mathbb{R}^n)}^{\frac{2n}{n+2s}}+o_l(1)\\
&\geq 2\mathcal{S}_{s,n}^{\frac{n}{2s}}+\|r_{k}^{l}\|_{L^{\frac{2n}{n+2s}}(\mathbb{R}^n)}^{\frac{2n}{n+2s}}+o_l(1)
\end{split}\end{equation*}
if there exists $j\neq 1$ such that $\psi_j\neq 0$. This is a contradiction with the assumption of $f_k$
 $$\frac{1}{2}\mathcal{S}_{s,n}^{\frac{n}{2s}}\leq \int_{\mathbb{R}^n}|f_k|^{\frac{2n}{n+2s}}dx\leq \frac{3}{2}\mathcal{S}_{s,n}^{\frac{n}{2s}}.$$
Hence we conclude that $\psi_j=0$ for $j\neq 1$. Now, we prove that $\psi_1\in \mathcal{M}_{HLS}$. Since we have already known that the positive solution of equation \begin{equation}\label{el equ}
|\psi_1|^{-\frac{4s}{n+2s}}\psi_1-(-\Delta)^{-s}\psi_1=0
\end{equation}
belongs to $\mathcal{M}_{HLS}^{+}$, we only need to prove that $\psi_1$ does not change signs. Testing the above equation (\ref{el equ}) with $\psi_1^{+}$ and $\psi_1^{-}$, we derive that
\begin{equation*}
\|\psi_1^{+}\|_{L^{\frac{2n}{n+2s}}(\mathbb{R}^n)}^{\frac{2n}{n+2s}}=\|(-\Delta)^{-\frac{s}{2}}\psi_1^{+}\|_{L^2(\mathbb{R}^n)}^2-\int_{\mathbb{R}^n}(-\Delta)^{-s}\psi_1^{-}\cdot\psi_1^{+}dx
\end{equation*}
and
\begin{equation*}
\|\psi_1^{-}\|_{L^{\frac{2n}{n+2s}}(\mathbb{R}^n)}^{\frac{2n}{n+2s}}=\|(-\Delta)^{-\frac{s}{2}}\psi_1^{-}\|_{L^2(\mathbb{R}^n)}^2-
\int_{\mathbb{R}^n}(-\Delta)^{-s}\psi_1^{+}\cdot\psi_1^{-}dx.
\end{equation*}
As a consequence of HLS inequality, we have
$$\mathcal{S}_{s,n}^{\frac{n}{n+2s}}\leq \frac{\|\psi_1^{+}\|_{L^{\frac{2n}{n+2s}}(\mathbb{R}^n)}^{\frac{2n}{n+2s}}}{\|(-\Delta)^{-\frac{s}{2}}\psi_1^{+}\|_{L^2(\mathbb{R}^n)}^{\frac{2n}{n+2s}}}=\frac{\|(-\Delta)^{-\frac{s}{2}}\psi_1^{+}\|_{L^2(\mathbb{R}^n)}^2-\int_{\mathbb{R}^n}(-\Delta)^{-s}\psi_1^{-}\cdot\psi_1^{+}dx}{\|(-\Delta)^{-\frac{s}{2}}\psi_1^{+}\|_{L^2(\mathbb{R}^n)}^{\frac{2n}{n+2s}}}$$
and
$$\mathcal{S}_{s,n}^{\frac{n}{n+2s}}\leq \frac{\|\psi_1^{-}\|_{L^{\frac{2n}{n+2s}}(\mathbb{R}^n)}^{\frac{2n}{n+2s}}}{\|(-\Delta)^{-\frac{s}{2}}\psi_1^{-}\|_{L^2(\mathbb{R}^n)}^{\frac{2n}{n+2s}}}
=\frac{\|(-\Delta)^{-\frac{s}{2}}\psi_1^{-}\|_{L^2(\mathbb{R}^n)}^2-\int_{\mathbb{R}^n}(-\Delta)^{-s}\psi_1^{+}\cdot\psi_1^{-}dx}{\|(-\Delta)^{-\frac{s}{2}}\psi_1^{-}\|_{L^2(\mathbb{R}^n)}^{\frac{2n}{n+2s}}},$$
which implies that $$\|(-\Delta)^{-\frac{s}{2}}\psi_1^{+}\|_{L^2(\mathbb{R}^n)}^2-\int_{\mathbb{R}^n}(-\Delta)^{-s}\psi_1^{-}\cdot\psi_1^{+}dx\geq \mathcal{S}_{s,n}^{\frac{n}{2s}}$$
and
$$\|(-\Delta)^{-\frac{s}{2}}\psi_1^{-}\|_{L^2(\mathbb{R}^n)}^2-\int_{\mathbb{R}^n}(-\Delta)^{-s}\psi_1^{+}\cdot\psi_1^{-}dx\geq \mathcal{S}_{s,n}^{\frac{n}{2s}}.$$
Testing the equation (\ref{el equ}) with $\psi_1$, we derive that
\begin{equation}\begin{split}
&\|\psi_1\|_{L^{\frac{2n}{n+2s}}(\mathbb{R}^n)}^{\frac{2n}{n+2s}}\\
&\ \ =\int_{\mathbb{R}^n}(-\Delta)^{-s}\psi_1\cdot \psi_1dx\\
&\ \ =\|(-\Delta)^{-\frac{s}{2}}\psi_1^{+}\|_{L^2(\mathbb{R}^n)}^2+\|(-\Delta)^{-\frac{s}{2}}\psi_1^{-}\|_{L^2(\mathbb{R}^n)}^2-\int_{\mathbb{R}^n}(-\Delta)^{-s}\psi_1^{-}\cdot\psi_1^{+}-\int_{\mathbb{R}^n}(-\Delta)^{-s}\psi_1^{+}\cdot\psi_1^{-}dx.
\end{split}\end{equation}
Combining the above estimate, we conclude that $\|\psi_1\|_{L^{\frac{2n}{n+2s}}(\mathbb{R}^n)}^{\frac{2n}{n+2s}}\geq 2\mathcal{S}_{s,n}^{\frac{n}{2s}},$
which is a contradiction with the assumption on the integral $\int_{\mathbb{R}^n}|f_k|^{\frac{2n}{n+2s}}dx$. Hence $\psi_1$ does not change signs and it must belong to $\mathcal{M}_{HLS}$. Without loss of generality, we denote $\big(h_k^{1}\big)^{-\frac{n+2s}{2}}\psi_1(\frac{\cdot-x_k^j}{h_k^j})$ by $U_{H}(z_k, \lambda_k)$. Then it follows that $$\lim\limits_{k\rightarrow +\infty}\|(-\Delta)^{-\frac{s}{2}}\Big( f_k-U_{H}(z_k,\lambda_k)\Big)\|_{L^2(\mathbb{R}^n)}=0.$$

Finally, we will show that $$\lim\limits_{k\rightarrow +\infty}\Big(|f_k|^{-\frac{4s}{n+2s}}f_k-(U_{H}(z_k,\lambda_k))^{-\frac{4s}{n+2s}}U_{H}(z_k,\lambda_k), f_k-U_{H}(z_k,\lambda_k)\Big)=0.$$
Direct computation gives that
\begin{equation}\begin{split}
&\Big(|f_k|^{-\frac{4s}{n+2s}}f_k-(U_{H}(z_k,\lambda_k))^{-\frac{4s}{n+2s}}U_{H}(z_k,\lambda_k), f_k-U_{H}(z_k,\lambda_k)\Big)\\
&=\Big(|f_k|^{-\frac{4s}{n+2s}}f_k, f_k-U_{H}(z_k,\lambda_k)\Big)-\Big((U_{H}(z_k,\lambda_k))^{-\frac{4s}{n+2s}}U_{H}(z_k,\lambda_k), f_k-U_{H}(z_k,\lambda_k)\Big)\\
&=\Big(|f_k|^{-\frac{4s}{n+2s}}f_k-(-\Delta)^{-s}f_k, f_k-U_{H}(z_k,\lambda_k)\Big)+\Big(-\Delta)^{-s}f_k, f_k-U_{H}(z_k,\lambda_k)\Big)\\
&\ \ -\Big((U_{H}(z_k,\lambda_k))^{-\frac{4s}{n+2s}}U_{H}(z_k,\lambda_k), f_k-U_{H}(z_k,\lambda_k)\Big)\\
&:=I+II+III.
\end{split}\end{equation}
For $I$, we derive that
$$I_1\leq \left\||f_k|^{-\frac{4s}{n+2s}}f_k-(-\Delta)^{-s}f_k\right\|_{L^{\frac{2n}{n-2s}}(\mathbb{R}^n)}\left(\|f_k\|_{L^{\frac{2n}{n+2s}}(\mathbb{R}^n)}+\|U_H\|_{L^{\frac{2n}{n+2s}}(\mathbb{R}^n)}\right)\rightarrow 0$$
as $k\rightarrow +\infty$ since $\lim\limits_{k\rightarrow +\infty}\left\||f_k|^{-\frac{4s}{n+2s}}f_k-(-\Delta)^{-s}f_k\right\|_{L^{\frac{2n}{n-2s}}(\mathbb{R}^n)}=0$ through the assumption.
For $II$, we derive that
$$II\leq \|(-\Delta)^{-\frac{s}{2}}f_k\|_{L^2(\mathbb{R}^n)}\|(-\Delta)^{-\frac{s}{2}}\left(f_k-U_{H}(z_k,\lambda_k\right)\|_{L^2(\mathbb{R}^n)}\rightarrow 0$$
as $k\rightarrow +\infty$ since we have obtained $\lim\limits_{k\rightarrow +\infty}\|(-\Delta)^{-\frac{s}{2}}\left(f_k-U_{H}(z_k,\lambda_k\right)\|_{L^2(\mathbb{R}^n)}=0$.
For $III$, since $U_{H}(z_k,\lambda_k)$ satisfies equation
$$(U_{H}(z_k,\lambda_k))^{-\frac{4s}{n+2s}}U_{H}(z_k,\lambda_k)=(-\Delta)^{-s}(U_{H}(z_k,\lambda_k)),$$ it follows that
\begin{equation}\begin{split}
&\Big((U_{H}(z_k,\lambda_k))^{-\frac{4s}{n+2s}}U_{H}(z_k,\lambda_k), f_k-U_{H}(z_k,\lambda_k)\Big)\\
&\ \ =\Big((-\Delta)^{-s}(U_{H}(z_k,\lambda_k)), f_k-U_{H}(z_k,\lambda_k)\Big)\\
&\ \ \leq \|(-\Delta)^{-\frac{s}{2}}(U_{H}(z_k,\lambda_k))\|_{L^2(\mathbb{R}^n)}\|(-\Delta)^{-\frac{s}{2}}\left(f_k-U_{H}(z_k,\lambda_k\right)\|_{L^2(\mathbb{R}^n)}\rightarrow 0
\end{split}\end{equation}
as $k\rightarrow +\infty$. This proves
\begin{equation}\label{est deHLS}
\lim\limits_{k\rightarrow +\infty}\Big(|f_k|^{-\frac{4s}{n+2s}}f_k-(U_{H}(z_k,\lambda_k))^{-\frac{4s}{n+2s}}U_{H}(z_k,\lambda_k), f_k-U_{H}(z_k,\lambda_k)\Big)=0.
\end{equation}
Finally, we prove $$\lim_{k\rightarrow +\infty}\int_{\mathbb{R}^n}|f_k-U_{H}|^{\frac{2n}{n+2s}}dx=0.$$
Write $$\langle|f_k|^{\frac{2n}{n+2s}-2}f_k-|U_{H}|^{\frac{2n}{n+2s}-2}U_{H}, f_k-U_{H}\rangle:=I+II,$$
where
$$I=\int_{\{f_k\geq 0\}}(|f_k|^{\frac{2n}{n+2s}-2}f_k-|U_{H}|^{\frac{2n}{n+2s}-2}U_{H})(f_k-U_{H})dx$$
and
$$II=\int_{\{f_k< 0\}}(|f_k|^{\frac{2n}{n+2s}-2}f_k-|U_{H}|^{\frac{2n}{n+2s}-2}U_{H})(f_k-U_{H})dx.$$
For $I$, using the inequality $a^q-b^q\geq q a^{q-1}(a-b)$ when $a\geq b\geq0,~~0<q<1$, H$\ddot{o}$lder's inequality for index less than 1, we can write
\begin{equation}\begin{split}\label{est I}
 &\ \ I\geq \frac{n-2s}{n+2s}\left(\int_{\{f_k\geq U_{H}\}} f_k^{\frac{2n}{n+2s}-2}(f_k-U_{H})^2dx+ \int_{\{0\leq f_k\leq  U_{H}\}}U_{H}^{\frac{2n}{n+2s}-2}(U_{H}-f_k)^2dx\right)\\
 &\ \ \geq \frac{n-2s}{n+2s} \left(\int_{\{f_k\geq U_{H}\}} |f_k|^{\frac{2n}{n+2s}}\right)^{-\frac{2s}{n}}\|(f_k-U_{H})\chi_{\{f_k\geq U_{H}\}}\|_{\frac{2n}{n+2s}}^2\\
 &\ \ \ \ \ +\frac{n-2s}{n+2s}\left(\int_{\{0\leq f_k\leq U_{H}\}} |U_{H}|^{\frac{2n}{n+2s}}\right)^{-\frac{2s}{n}}\|(f_k-U_{H})\chi_{\{0\leq f_k\leq U_{H}\}}\|_{\frac{2n}{n+2s}}^2\\
 &\ \ \geq \min\left\{\left(\int_{\mathbb{R}^n} |f_k|^{\frac{2n}{n+2s}}\right)^{-\frac{2s}{n}},\  \left(\int_{\mathbb{R}^n} |U_{H}|^{\frac{2n}{n+2s}}\right)^{-\frac{2s}{n}}\right\} \frac{n-2s}{n+2s}2^{-2s/n}\|(f_k-U_{H})\chi_{\{f_k\geq 0\}}\|_{\frac{2n}{n+2s}}^2.
 \end{split}\end{equation}
For $II$, using the inequality $a^q+b^q\geq  a^{q-1}(a+b)$ when $a\geq b\geq0,~~0<q<1$, H$\ddot{o}$lder's inequality for index less than 1, we can write
\begin{equation}\begin{split}\label{est II}
 &II=\int_{\{f_k< 0\}}(|f_k|^{\frac{2n}{n+2s}-2}(-f_k)+|U_{H}|^{\frac{2n}{n+2s}-2}U_{H})((-f_k)+U_{H})dx.\\
 &\ \ \geq \left(\int_{\{-f_k\geq U_{H}\}} f_k^{\frac{2n}{n+2s}-2}(f_k-U_{H})^2dx+ \int_{\{0\leq-f_k\leq  U_{H}\}}U_{H}^{\frac{2n}{n+2s}-2}(U_{H}-f_k)^2dx\right)\\
 &\ \ \geq  \left(\int_{\{-f_k\geq U_{H}\}} |f_k|^{\frac{2n}{n+2s}}\right)^{-\frac{2s}{n}}\|(f_k-U_{H})\chi_{\{-f_k\geq U_{H}\}}\|_{\frac{2n}{n+2s}}^2\\
 &\ \ \ \ \ +\left(\int_{\{0< -f_k\leq U_{H}\}} |U_{H}|^{\frac{2n}{n+2s}}\right)^{-\frac{2s}{n}}\|(f_k-U_{H})\chi_{\{0< -f_k\leq U_{H}\}}\|_{\frac{2n}{n+2s}}^2\\
 &\ \ \geq \min\left\{\left(\int_{\mathbb{R}^n} |f_k|^{\frac{2n}{n+2s}}\right)^{-\frac{2s}{n}},\  \left(\int_{\mathbb{R}^n} |U_{H}|^{\frac{2n}{n+2s}}\right)^{-\frac{2s}{n}}\right\} 2^{-2s/n}\|(f_k-U_{H})\chi_{\{f_k<0\}}\|_{\frac{2n}{n+2s}}^2.
 \end{split}\end{equation}
Along with (\ref{est deHLS}), (\ref{est I}) and (\ref{est II}), we accomplish the proof of lemma \ref{deHLS}.
\end{proof}

Next, we will set up the Struwe type decomposition lemma for critical points of Sobolev inequality without the nonnegativity assumption for functions under consideration from the analogues for the HLS inequality i.e. Lemma \ref{deHLS} by a duality method. First we need the following Lemma.
\begin{lemma}\label{simh}
Assume that $u_k\in \dot{H}^{s}(\mathbb{R}^n)$ satisfying
$$\lim\limits_{k\rightarrow +\infty}\left\|(-\Delta)^su_k-|u_k|^{\frac{4s}{n-2s}}u_k\right\|_{\dot H^{-s}(\mathbb{R}^n)}=0.$$
If we let $f_k=|u_k|^{\frac{4s}{n-2s}}u_k$, then there holds
$$\lim\limits_{k\rightarrow +\infty}\left\|(-\Delta)^{-s}f_k-|f_k|^{-\frac{4s}{n+2s}}f_k\right\|_{L^{\frac{2n}{n-2s}}(\mathbb{R}^n)}=0.$$
\end{lemma}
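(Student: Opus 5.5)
Write $p=\frac{n+2s}{n-2s}$, so that $f_k=|u_k|^{p-1}u_k$. Since $t\mapsto |t|^{p-1}t$ is inverted by $t\mapsto |t|^{\frac1p-1}t$ and $\frac1p-1=-\frac{4s}{n+2s}$, we have the pointwise identity
$$|f_k|^{-\frac{4s}{n+2s}}f_k=u_k .$$
Put $w_k:=(-\Delta)^su_k-|u_k|^{\frac{4s}{n-2s}}u_k=(-\Delta)^su_k-f_k$. Then
$$(-\Delta)^{-s}f_k-|f_k|^{-\frac{4s}{n+2s}}f_k=(-\Delta)^{-s}f_k-u_k=-(-\Delta)^{-s}w_k .$$
The whole lemma therefore reduces to the bound
$$\|(-\Delta)^{-s}w_k\|_{L^{\frac{2n}{n-2s}}}\lesssim \|w_k\|_{\dot H^{-s}} .$$

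To prove this bound, note that $(-\Delta)^{-s}$ is an isometry from $\dot H^{-s}(\mathbb{R}^n)$ onto $\dot H^{s}(\mathbb{R}^n)$. Indeed,
$$\|(-\Delta)^{-s}w\|_{\dot H^s}=\|(-\Delta)^{-s/2}w\|_{L^2}=\|w\|_{\dot H^{-s}} .$$
The fractional Sobolev inequality \eqref{eq-sob} then gives
$$\|(-\Delta)^{-s}w_k\|_{L^{\frac{2n}{n-2s}}}\le \mathcal S_{s,n}^{-1/2}\|(-\Delta)^{-s}w_k\|_{\dot H^s}=\mathcal S_{s,n}^{-1/2}\|w_k\|_{\dot H^{-s}}\to 0 .$$

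The only step needing care is the functional setting, i.e.\ justifying that $w_k\in\dot H^{-s}$ and that $(-\Delta)^{-s}w_k=u_k-(-\Delta)^{-s}f_k$ holds as an identity of functions. For this:
\begin{itemize}
\item Since $u_k\in\dot H^s$, Sobolev gives $u_k\in L^{\frac{2n}{n-2s}}$, so $f_k\in L^{\frac{2n}{n+2s}}$.
\item By duality with Sobolev, $L^{\frac{2n}{n+2s}}\hookrightarrow \dot H^{-s}$, so $f_k\in\dot H^{-s}$.
\item By HLS, $(-\Delta)^{-s}f_k\in \dot H^s\cap L^{\frac{2n}{n-2s}}$.
\item Also $(-\Delta)^su_k\in\dot H^{-s}$ and $(-\Delta)^{-s}(-\Delta)^su_k=u_k$ in $\dot H^s$.
\end{itemize}
Hence $w_k\in\dot H^{-s}$ and the identity above holds in $\dot H^s$, which embeds into $L^{\frac{2n}{n-2s}}$. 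The claimed limit follows.
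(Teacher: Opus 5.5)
Your proof is correct and follows the paper's argument: invert the pointwise power map to get $|f_k|^{-\frac{4s}{n+2s}}f_k=u_k$, so the HLS residual is $(-\Delta)^{-s}$ applied to the Sobolev residual (up to sign), then use that $(-\Delta)^{-s}$ is an isometry $\dot H^{-s}\to\dot H^{s}$ together with the fractional Sobolev inequality. Your version has the correct constant $\mathcal S_{s,n}^{-1/2}$ and the correct norm $L^{\frac{2n}{n-2s}}$; the last line of the paper's chain writes $\mathcal S_{s,n}$ and $L^{\frac{2n}{n+2s}}$, which is a typo. You also supply the functional-setting justification that the paper omits.
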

\begin{proof}
Let $F(u_k)=(-\Delta)^su_k-|u_k|^{\frac{4s}{n-2s}}u_k$, then direct calculation leads to
$$|f_k|^{-\frac{4s}{n+2s}}f_k-(-\Delta)^{-s}f_k=(-\Delta)^{-s}(F(u_k)).$$
Then it follows that
\begin{equation}\label{shls}\begin{split}
\left\|(-\Delta)^su_k-|u_k|^{\frac{4s}{n-2s}}u_k\right\|_{H^{-s}(\mathbb{R}^n)}&=\|F(u_k)\|_{H^{-s}(\mathbb{R}^n)}\\
&=\|(-\Delta)^{-\frac{s}{2}}(F(u_k))\|_{L^2(\mathbb{R}^n)}\\
&=\|(-\Delta)^{\frac{s}{2}}\Big((-\Delta)^{-s}(F(u_k))\Big)\|_{L^2(\mathbb{R}^n)}\\
&\geq \mathcal{S}_{s,n}\left\||f_k|^{-\frac{4s}{n+2s}}f_k-(-\Delta)^{-s}f_k\right\|_{L^{\frac{2n}{n+2s}}(\mathbb{R}^n)}.
\end{split}\end{equation}
This accomplishes the proof of Lemma \ref{simh}.
\end{proof}

\begin{lemma}\label{deSo}
For $0<s<\frac{n}{2}$ and $\{u_k\}_k\in \dot{H}^s(\mathbb{R}^n)$ such that
$$\big(\frac{1}{2}\big)^{\frac{n-2s}{n}}\mathcal{S}_{s,n}^{\frac{n}{2s}}\leq \int_{\mathbb{R}^n}|(-\Delta)^{\frac{s}{2}}u_k|^2dx\leq \big(\frac{3}{2}\big)^{\frac{n-2s}{n}}\mathcal{S}_{s,n}^{\frac{n}{2s}},$$
$$\lim\limits_{k\rightarrow +\infty}\left\|(-\Delta)^su_k-|u_k|^{\frac{4s}{n-2s}}u_k\right\|_{\dot H^{-s}(\mathbb{R}^n)}=0,$$
then there exists a subsequence of $\{u_k\}_k$ (still denoted by $\{u_k\}_k$) and $\{z_k\}_k\subset \mathbb{R}^n$, $\{\lambda_k\}_k\subset \mathbb{R}^{+}$ such that
$$\lim\limits_{k\rightarrow +\infty}\left\|(-\Delta)^{\frac{s}{2}}(u_k-\beta U(z_k, \lambda_k))\right\|_{L^2(\mathbb{R}^n)}=0,$$
where $\beta$ is equal to $+1$ or $-1$.
\end{lemma}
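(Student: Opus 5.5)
We argue by duality, transferring Lemma \ref{deSo} to the HLS setting where Lemma \ref{deHLS} is available. Set $f_k=|u_k|^{\frac{4s}{n-2s}}u_k$, so that $|f_k|^{\frac{2n}{n+2s}}=|u_k|^{\frac{2n}{n-2s}}$ and $|f_k|^{-\frac{4s}{n+2s}}f_k=u_k$. By Lemma \ref{simh}, $\{f_k\}$ is a Palais--Smale sequence for the HLS integral equation in the sense required by Lemma \ref{deHLS}. It then remains to check the energy window and to transfer convergence back from $f_k$ to $u_k$.

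\textbf{Energy window.} Test the Sobolev equation with $u_k$:
$$\|u_k\|_{\dot H^s}^2-\int|u_k|^{\frac{2n}{n-2s}}dx=o(1)\|u_k\|_{\dot H^s}=o(1),$$
so $\int|f_k|^{\frac{2n}{n+2s}}=\|u_k\|_{\dot H^s}^2+o(1)$. The hypothesis $(\frac12)^{\frac{n-2s}{n}}\mathcal S^{\frac{n}{2s}}\le\|u_k\|^2_{\dot H^s}\le(\frac32)^{\frac{n-2s}{n}}\mathcal S^{\frac{n}{2s}}$ therefore places the HLS energy, for large $k$, strictly inside an interval $[\alpha\mathcal S^{\frac n{2s}},\beta\mathcal S^{\frac n{2s}}]$ with $0<\alpha<1<\beta<2$. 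Indeed, $(1/2)^{(n-2s)/n}\in(1/2,1)$ and $(3/2)^{(n-2s)/n}\in(1,3/2)$. By Remark \ref{remark}, Lemma \ref{deHLS} applies.

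\textbf{Applying Lemma \ref{deHLS}.} We obtain $z_k,\lambda_k$ and $\beta=\pm1$ with $\|f_k-\beta U_H(z_k,\lambda_k)\|_{L^{\frac{2n}{n+2s}}}\to0$. Note that $U_H^{\frac{n-2s}{n+2s}}=U(z_k,\lambda_k)$ up to the normalizing constants, which match because $d_{n,s}=c_{n,s}^{\frac{n-2s}{n+2s}}$. The map $f\mapsto|f|^{-\frac{4s}{n+2s}}f$ is continuous from $L^{\frac{2n}{n+2s}}$ to $L^{\frac{2n}{n-2s}}$ (a Nemytskii operator). Using the elementary inequality from the proof of Lemma \ref{deHLS}, $|F(a)-F(b)|\le C|a-b|^{\frac{n-2s}{n+2s}}$, we get
$$\|u_k-\beta U(z_k,\lambda_k)\|_{L^{\frac{2n}{n-2s}}}\le C\|f_k-\beta U_H\|_{L^{\frac{2n}{n+2s}}}^{\frac{n-2s}{n+2s}}\to0.$$

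\textbf{Upgrading to $\dot H^s$.} Write $w_k=u_k-\beta U_k$ with $U_k=U(z_k,\lambda_k)$. Since $\beta U_k$ solves the equation,
$$\|w_k\|_{\dot H^s}^2=\big((-\Delta)^su_k-|u_k|^{\frac{4s}{n-2s}}u_k,w_k\big)+\big(|u_k|^{\frac{4s}{n-2s}}u_k-|\beta U_k|^{\frac{4s}{n-2s}}\beta U_k,w_k\big).$$
The first term is at most $o(1)\|w_k\|_{\dot H^s}$, and $\|w_k\|_{\dot H^s}$ is bounded. The second term is $\int(f_k-\beta U_H)w_k$, which by Hölder is bounded by $\|f_k-\beta U_H\|_{L^{\frac{2n}{n+2s}}}\|w_k\|_{L^{\frac{2n}{n-2s}}}\to0$. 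Hence $\|w_k\|_{\dot H^s}\to0$.

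\textbf{Main obstacle.} The delicate point is confirming that Lemma \ref{deHLS} really holds in the widened energy window (Remark \ref{remark}). The sign argument there needs $\int|f_k|^{\frac{2n}{n+2s}}<2\mathcal S^{\frac n{2s}}$, which holds here, and the lower bound ensures the profile $\psi_1\ne0$. Everything else is Hölder-type bookkeeping.
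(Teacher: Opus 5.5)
Your proof is correct and follows the paper's route: you pass to $f_k=|u_k|^{\frac{4s}{n-2s}}u_k$, get the energy window by testing the equation with $u_k$, apply Lemma \ref{deHLS}, and upgrade to $\dot H^s$ using that $\beta U(z_k,\lambda_k)$ solves the equation. The difference is minor. The paper transfers the monotone pairing from Lemma \ref{deHLS} to the Sobolev side and uses the $p\ge 2$ inequality to get $u_k\to\beta U$ in $L^{\frac{2n}{n-2s}}$. You instead use the $L^{\frac{2n}{n+2s}}$ conclusion of Lemma \ref{deHLS}. In your final H\"older step, boundedness of $\|w_k\|_{L^{\frac{2n}{n-2s}}}$ (by Sobolev) already suffices, so the Lemma \ref{lemma-holder-1d} step is not needed.
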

\begin{proof}
Let $f_k=|u_k|^{\frac{4s}{n-2s}}u_k$, then through the fractional Sobolev inequality, we can get
$$\int_{\mathbb{R}^n}|f_k|^{\frac{2n}{n+2s}}dx=\int_{\mathbb{R}^n}|u_k|^{\frac{2n}{n-2s}}dx\leq (\mathcal{S}_{s,n}^{-1})^{\frac{n}{n-2s}} \big(\int_{\mathbb{R}^n}|(-\Delta)^{\frac{s}{2}}u_k|^2dx\big)^{\frac{n}{n-2s}}.$$
This together with $\lim\limits_{k\rightarrow +\infty}\int_{\mathbb{R}^n}|(-\Delta)^{\frac{s}{2}}u_k|^2dx\leq \big(\frac{3}{2}\big)^{\frac{n-2s}{n}}\mathcal{S}_{s,n}^{\frac{n}{2s}}$ gives that
$$\lim\limits_{k\rightarrow +\infty}\int_{\mathbb{R}^n}|f_k|^{\frac{2n}{n+2s}}dx\leq \frac{3}{2}\mathcal{S}_{s,n}^{\frac{n}{2s}}.$$
On the other hand, since $$\lim\limits_{k\rightarrow +\infty}\left\|(-\Delta)^su_k-|u_k|^{\frac{4s}{n-2s}}u_k\right\|_{H^{-s}(\mathbb{R}^n)}=0,$$ we derive that
$$\lim\limits_{k\rightarrow +\infty}|\big((-\Delta)^su_k-|u_k|^{\frac{4s}{n-2s}}u_k, u_k\big)|\leq \lim\limits_{k\rightarrow +\infty}\left\|(-\Delta)^su_k-|u_k|^{\frac{4s}{n-2s}}u_k\right\|_{\dot H^{-s}(\mathbb{R}^n)} \|u_k\|_{\dot H^s(\mathbb{R}^n)}=0.$$
This gives that $$\lim\limits_{k\rightarrow +\infty}\int_{\mathbb{R}^n}|(-\Delta)^{\frac{s}{2}}u_k|^2dx=\lim\limits_{k\rightarrow +\infty}\int_{\mathbb{R}^n}|u_k|^{\frac{2n}{n-2s}}dx.$$
Hence $$\lim\limits_{k\rightarrow +\infty}\int_{\mathbb{R}^n}|f_k|^{\frac{2n}{n+2s}}dx=\lim\limits_{k\rightarrow +\infty}\int_{\mathbb{R}^n}|u_k|^{\frac{2n}{n-2s}}dx=\lim\limits_{k\rightarrow +\infty}\int_{\mathbb{R}^n}|(-\Delta)^{\frac{s}{2}}u_k|^2dx\geq \big(\frac{1}{2}\big)^{\frac{n-2s}{n}}\mathcal{S}_{s,n}^{\frac{n}{2s}}.$$
Now, applying Lemma \ref{deHLS}(see remark \ref{remark}), we derive that $$\lim\limits_{k\rightarrow +\infty}\Big(|f_k|^{-\frac{4s}{n+2s}}f_k-\beta U_{H}^{-\frac{4s}{n+2s}}(z_k,\lambda_k)U_{H}(z_k,\lambda_k), f_k-\beta U_{H}(z_k,\lambda_k)\Big)=0.$$
Since $f_k=|u_k|^{\frac{4s}{n-2s}}u_k$, then it follows that
\begin{equation}\begin{split}\label{eq 1}
&\Big(|f_k|^{-\frac{4s}{n+2s}}f_k-\beta U_{H}^{-\frac{4s}{n+2s}}(z_k,\lambda_k)U_{H}(z_k,\lambda_k), f_k-\beta U_{H}(z_k,\lambda_k)\Big)\\
&\ \ =\Big(u_k-\beta U(z_k,\lambda_k), |u_k|^{\frac{2n}{n-2s}-2}u_k-\beta U^{\frac{2n}{n-2s}-2}(z_k,\lambda_k)U(z_k,\lambda_k)\Big),
\end{split}\end{equation}
where we used the fact $U_{H}^{-\frac{4s}{n+2s}}(z_k,\lambda_k)U_{H}(z_k,\lambda_k)=U(z_k, \lambda_k)$.
Combining (\ref{eq 1}) and the vector inequality $$\big(|\overrightarrow{a}|^{p-2}\overrightarrow{a}-|\overrightarrow{b}|^{p-2}, \overrightarrow{a}-\overrightarrow{b}\big)\geq C_p |\overrightarrow{a}-\overrightarrow{b}|^p,\ \ \forall p\geq 2,$$
we conclude that
$$\lim\limits_{k\rightarrow +\infty}\int_{\mathbb{R}^n}|u_k-\beta U(z_k,\lambda_k)|^{\frac{2n}{n-2s}}dx=0.$$
Now, we start to prove that
$$\lim\limits_{k\rightarrow +\infty}\int_{\mathbb{R}^n}|(-\Delta)^{\frac{s}{2}}(u_k-\beta U(z_k, \lambda_k))|^2dx=0.$$
We can write
\begin{equation}\begin{split}
&\|(-\Delta)^{\frac{s}{2}}(u_k-\beta U(z_k, \lambda_k))\|_{L^2(\mathbb{R}^n)}\\
&\ \ =\Big((-\Delta)^su_k, u_k-\beta U(z_k, \lambda_k)\Big)-\Big((-\Delta)^s(\beta U(z_k, \lambda_k)), u_k-\beta U(z_k, \lambda_k)\Big)\\
&\ \ =\Big((-\Delta)^su_k-|u_k|^{\frac{4s}{n-2s}}u_k,u_k-\beta U(z_k, \lambda_k)\Big)+\Big(|u_k|^{\frac{4s}{n-2s}}u_k,u_k-\beta U(z_k, \lambda_k)\Big)\\
&\ \ \ \ -\Big((-\Delta)^s(\beta U(z_k, \lambda_k)), u_k-\beta U(z_k, \lambda_k)\Big)\\
&\ \ :=I+II+III.
\end{split}\end{equation}
For $I$, we derive that
$$I\leq \left\|(-\Delta)^su_k-|u_k|^{\frac{4s}{n-2s}}u_k\right\|_{\dot H^{-s}(\mathbb{R}^n)}\|u_k-\beta U(z_k, \lambda_k)\|_{\dot H^s(\mathbb{R}^n)}\rightarrow 0$$
as $k\rightarrow +\infty$ since $\lim\limits_{k\rightarrow +\infty}\left\|(-\Delta)^su_k-|u_k|^{\frac{4s}{n-2s}}u_k\right\|_{\dot H^{-s}(\mathbb{R}^n)}=0$ through the assumption of Lemma \ref{deSo}.
For $II$, we derive that
$$II\leq \|u_k\|_{L^{\frac{2n}{n-2s}}(\mathbb{R}^n)}^{\frac{2n}{n-2s}-1}\|u_k-\beta U(z_k, \lambda_k)\|_{L^{\frac{2n}{n-2s}}(\mathbb{R}^n)}\rightarrow 0$$
as $k\rightarrow +\infty$ since we have obtained $\lim\limits_{k\rightarrow +\infty}\|u_k-\beta U(z_k, \lambda_k)\|_{L^{\frac{2n}{n-2s}}(\mathbb{R}^n)}=0$.
For III, since $U(z_k, \lambda_k)$ satisfies equation $$(-\Delta)^s U(z_k, \lambda_k)=U(z_k, \lambda_k)^{\frac{4s}{n-2s}}U(z_k, \lambda_k),$$
then it follows that
\begin{equation}\begin{split}
III&=\Big((\beta U(z_k, \lambda_k))^{\frac{4s}{n-2s}}U(z_k, \lambda_k), u_k-\beta U(z_k, \lambda_k)\Big)\\
&\leq \|U(z_k,\lambda_k)\|_{L^{\frac{2n}{n-2s}}(\mathbb{R}^n)}^{\frac{2n}{n-2s}-1}\|u_k-\beta U(z_k, \lambda_k)\|_{L^{\frac{2n}{n-2s}}(\mathbb{R}^n)}\rightarrow 0
\end{split}\end{equation}
as $k\rightarrow +\infty$. Then we accomplish the proof of Lemma \ref{deSo}.
\end{proof}

\section{Proof of the main Theorem}
In this section, we will prove the stability for critical point of the Hardy-Littlewood-Sobolev inequality. We first prove the following local stability.

\begin{lemma}\label{loc sta}
For all $f\in L^{\frac{2n}{n+2s}}(\mathbb{R}^n)$ with
$$\frac{1}{2}\mathcal{S}_{s,n}^{\frac{n}{2s}}\leq\int_{\mathbb{R}^n}|f|^{\frac{2n}{n+2s}}dx\leq \frac{3}{2}\mathcal{S}_{s,n}^{\frac{n}{2s}},$$
 there holds
$$\left\||f|^{-\frac{4s}{n+2s}}f-(-\Delta)^{-s}f\right\|_{L^\frac{2n}{n-2s}(\mathbb{R}^n)}\geq C_{n,s}d(f,\mathcal{M}_{HLS})+o(d(f,\mathcal{M}_{HLS})),$$
where $d(f,\mathcal{M}_{HLS})=\inf\limits_{h\in \mathcal{M}_{HLS}}\|f-h\|_{L^{\frac{2n}{n+2s}}(\mathbb{R}^n)}$.
\end{lemma}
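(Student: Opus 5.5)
The local statement is really a statement about $f$ close to $\mathcal{M}_{HLS}$: when $d(f,\mathcal{M}_{HLS})$ is not small the $o(\cdot)$ term absorbs everything. So I will assume $d(f,\mathcal{M}_{HLS})\to 0$ and prove the inequality asymptotically.

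By conformal invariance I will transfer the problem to $\mathbb{S}^n$ through stereographic projection. There the $L^{\frac{2n}{n+2s}}$ and $L^{\frac{2n}{n-2s}}$ norms are preserved, and $(-\Delta)^{-s}$ becomes the intertwining operator $A_{2s}^{-1}$. Its eigenvalues on degree-$k$ spherical harmonics are
$$\lambda_k^{-1}=\frac{\Gamma(k+n/2-s)}{\Gamma(k+n/2+s)}.$$

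Following the weak-decomposition--strong-stability idea, I write $u=\phi+r$, with $\phi=\pm c(1+\ldots)$ a bubble chosen to minimize the $H^{-s}(\mathbb{S}^n)$ distance $\|A_{2s}^{-1/2}(u-h)\|_{L^2}$ over $h\in\mathcal{M}_{HLS}$ (the minimizer exists by a compactness argument when $d$ is small). Rescaling, I take $\phi$ to be the constant bubble $c_*$. The first-order conditions give that $r$ is $H^{-s}$-orthogonal to constants and to the first spherical harmonics, i.e. to the tangent space of $\mathcal{M}_{HLS}$. The key comparison, Lemma \ref{infinitesimals}, is that $\|r\|_{L^{\frac{2n}{n+2s}}}\to 0$ as $d\to 0$, and that $\|r\|_{\frac{2n}{n+2s}}\ge d(u,\mathcal{M}_{HLS})$ trivially; so it suffices to bound the defect from below by $C\|r\|_{\frac{2n}{n+2s}}$. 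I would prove the smallness of $r$ in $L^{\frac{2n}{n+2s}}$ from the $L^{\frac{2n}{n+2s}}$-closeness of $u$ to some bubble together with continuity of the $H^{-s}$-projection onto the manifold.

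Next I expand the defect. Put $F(t)=|t|^{-\frac{4s}{n+2s}}t$ and $p=\frac{n-2s}{n+2s}<1$, so $F$ is homogeneous of degree $p$. Then
$$F(c_*+r)-A_{2s}^{-1}(c_*+r)=F'(c_*)r-A_{2s}^{-1}r+R(r),\qquad F'(c_*)=p\,c_*^{p-1}.$$
The linear part is a multiplier on spherical harmonics: $p\lambda_0^{-1}-\lambda_k^{-1}$ in suitable normalization. It vanishes for $k=1$ and is negative for $k=0$, but $r$ has no components in degrees $0,1$. For $k\ge2$ it is bounded away from zero uniformly, by $p\lambda_0^{-1}-\lambda_2^{-1}$ up to normalization; this is where the explicit constants in the Remark come from.

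To get a lower bound in $L^{\frac{2n}{n-2s}}$ without Hilbert structure, I test against $r$ itself. By Hölder,
$$\|D(u)\|_{\frac{2n}{n-2s}}\,\|r\|_{\frac{2n}{n+2s}}\ \ge\ \big(D(u),r\big).$$
Here $\big(F(c_*+r)-F(c_*),r\big)$ is handled by the pointwise monotonicity inequalities already used in the proof of Lemma \ref{deHLS}, which give
$$\big(F(c_*+r)-F(c_*),r\big)\ \gtrsim\ \int\min\big(c_*^{p-1},|r|^{p-1}\big)r^2.$$
The term $-(A_{2s}^{-1}r,r)$ is controlled by the spectral gap above, using the orthogonality. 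Combining, I aim for
$$\big(D(u),r\big)\ \ge\ \Big(p\,c_*^{p-1}-\tfrac{\lambda_2^{-1}}{\lambda_0^{-1}}\,p\,c_*^{p-1}\Big)\int_{\{|r|\le\delta\}}r^2\;+\;\text{(contribution of }\{|r|>\delta\})\;-\;o\big(\|r\|^2_{\frac{2n}{n+2s}}\big).$$

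\textbf{Main obstacle.} The $L^2$-type quantity obtained this way must then be converted back to $\|r\|_{\frac{2n}{n+2s}}^2$. Since $\frac{2n}{n+2s}<2$ and the sphere has finite measure, Hölder gives $\|r\|_{\frac{2n}{n+2s}}\lesssim\|r\|_{L^2}$ on the bounded region. On the region where $|r|$ is large, the weight $|r|^{p-1}$ yields $\int|r|^{p+1}=\int|r|^{\frac{2n}{n+2s}}$, which is much larger than $\|r\|^2_{\frac{2n}{n+2s}}$ when the latter is small. The real difficulty is that the spectral gap only controls $(A_{2s}^{-1}r,r)\le\lambda_2^{-1}\|r\|_2^2$ via the full $L^2$ norm, including the large-$|r|$ set where the nonlinearity is weaker. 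I would first try splitting $r=r_1+r_2$ with $r_1=r\chi_{\{|r|\le\delta c_*\}}$. On $r_1$ I would use the Taylor expansion and spectral gap; on $r_2$ I would use the fact that $|\{|r|>\delta c_*\}|$ is small, so $(A_{2s}^{-1}r_2,r_2)$ is small relative to $\int|r_2|^{\frac{2n}{n+2s}}$ by HLS on small sets. Cross terms would be handled by Cauchy--Schwarz in $H^{-s}$.

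Finally, dividing by $\|r\|_{\frac{2n}{n+2s}}\ge d(f,\mathcal{M}_{HLS})$ yields the claimed inequality with an explicit $C_{n,s}$ and an $o(d)$ error.
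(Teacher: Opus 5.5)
There is a genuine gap where you assert that $r$ is $H^{-s}$-orthogonal to constants.

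\textbf{Why constants are not excluded.} The set $\mathcal{M}_{HLS}$ (on the sphere, $\mathcal{M}$) consists of bubbles whose amplitude is \emph{fixed} at $\pm c_{n,s}$ by the equation. It is parametrized by $\xi\in B^{n+1}$ only, and at the constant bubble its tangent vectors $\partial_{\xi_i}v_\xi|_{\xi=0}$ are multiples of $\omega_i$. So minimizing $\langle\mathcal{P}_{2s}(u-h),u-h\rangle$ over $\mathcal{M}$ gives orthogonality to degree-one harmonics only, which is exactly what Lemma \ref{decompostition} states. Orthogonality to constants would come from minimizing over the cone $M_{HLS}$ of HLS optimizers, which is the wrong set here.

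\textbf{Why this breaks the argument.} Degree $0$ is precisely where coercivity fails. Since $c_{n,s}^{-\frac{4s}{n+2s}}=\frac{\Gamma(n/2-s)}{\Gamma(n/2+s)}=\lambda_0^{-1}$, the linearized operator $F'(c_{n,s})-\mathcal{P}_{2s}$ has eigenvalue $-\frac{4s}{n+2s}\lambda_0^{-1}<0$ on constants (you note this yourself), $0$ on degree one, and $p\lambda_0^{-1}-\lambda_k^{-1}>0$ for $k\ge 2$. Take $r=\epsilon(aY_{0,1}+bY_{2,1})$ with $\frac{4s}{n+2s}\lambda_0^{-1}a^2=(p\lambda_0^{-1}-\lambda_2^{-1})b^2$. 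Then $(D(u),r)=o(\epsilon^2)$, although $\|D(u)\|_{\frac{2n}{n-2s}}\sim\epsilon$ and $\|r\|_{\frac{2n}{n+2s}}\sim\epsilon$. So testing against $r$ plus H\"older proves nothing. The cancellation occurs at the linear level, so no treatment of the set $\{|r|>\delta c_*\}$ can repair it.

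\textbf{The missing idea.} The paper resolves this with a case distinction on the constant coefficient. Normalize $u=c_{n,s}+\epsilon\widetilde{r_0}$ with $\|\widetilde{r_0}\|_{\frac{2n}{n+2s}}=1$.

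If $\widetilde{r_{0,1}}^2\le\frac{n}{n+2s}\|\widetilde{r_0}\|_2^2$, the constant part is absorbed into the amplitude: $u=\beta c_{n,s}+\epsilon(\widetilde{r_0}-\widetilde{r_{0,1}}Y_{0,1})$ with $\beta\to 1$. One then tests against the remainder, which is orthogonal to degrees $0$ and $1$. This uses the gap $\beta^{-\frac{4s}{n+2s}}-\frac{n/2-s+1}{n/2+s+1}>\frac{s}{n/2+s+1}$ and the fact that the remainder carries at least $\frac{2s}{n+2s}$ of the $L^2$ mass.

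If the constant part dominates, one tests against $-\widetilde{r_0}$. The negative eigenvalue now contributes $+\frac{4s}{n+2s}\lambda_0^{-1}\widetilde{r_{0,1}}^2$. This beats the at most $p\lambda_0^{-1}\cdot\frac{2s}{n+2s}\|\widetilde{r_0}\|_2^2$ coming from degrees $\ge 2$. At the linearized level you could equally test against $r$ with its constant component sign-flipped, which makes every mode contribute positively.

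\textbf{Consequences for the constants.} The second case produces the second entry $\frac{\Gamma(n/2-s)}{\Gamma(n/2+s)}\frac{2s}{n+2s}|\mathbb{S}^n|^{-\frac{2s}{n}}$ in the minimum defining $C_{n,s}$. Your attribution of the constants to the $k=2$ gap alone misses this. Also, that gap is $p\lambda_0^{-1}-\lambda_2^{-1}$, not $p\lambda_0^{-1}-p\lambda_2^{-1}$ as in your display.

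\textbf{Your ``main obstacle''.} The paper sidesteps it by reducing to $u\in C(\mathbb{S}^n)$ and Taylor-expanding pointwise. It then passes from $\|\widetilde{r_0}\|_2$ to $\|\widetilde{r_0}\|_{\frac{2n}{n+2s}}$ by H\"older on $\mathbb{S}^n$. Your truncation at $|r|\sim\delta c_*$ is a reasonable way to make that expansion quantitative, but it only helps once the sign issue is handled. Note also that once you test against something other than $r$, the monotonicity of $F$ you planned to use on the large set no longer applies directly.
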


In fact, we only need to prove the analogue Lemma on the sphere $\mathbb{S}^n$. By the stereographic projection $\mathcal{S}:\mathbb{R}^n\cup\{\infty\}\rightarrow \mathbb{S}^n$, we know that $\mathbb{R}^n$ (or rather $\mathbb{R}^n\cup\{\infty\}$) and $\mathbb{S}^n$ ($\subset \mathbb{R}^{n+1}$) are conformally equivalent. For any $f\in L^{\frac{2n}{n+2s}}(\mathbb{R}^n)$, let $u$ be defined on $\mathbb{S}^n$ by (See Lieb and Loss in \cite{LiebLoss})
$$u(\xi)=\left(\frac{1+|\mathcal{S}^{-1}(\xi)|^2}{2}\right)^{\frac{n+2s}{2}}f(\mathcal{S}^{-1}(\xi)),$$
then $$\|f\|_{L^{\frac{2n}{n+2s}}(\mathbb{R}^n)}=\|u\|_{L^{\frac{2n}{n+2s}}(\mathbb{S}^n)}.$$
Denote the fractional integral operator on the sphere by
$$\mathcal{P}_{2s}(u)(\omega)=\frac{\Gamma(\frac{n}{2}-s)}{2^{2s}\pi^{\frac{n}{2}}\Gamma(s)}\int_{\mathbb{S}^n}\frac{u(\xi)}{|\omega-\xi|^{n-2s}}d\xi.$$
Using the fact
$$J_{\mathcal{S}}(x)^{\frac{1}{n}}|x-y|^2J_{\mathcal{S}}(y)^{\frac{1}{n}}=|\mathcal{S}(x)-\mathcal{S}(y)|,~~~x,y\in \mathbb{R}^n$$
we easily see that
$$\mathcal{P}_{2s}(u)(\omega)=J_{\mathcal{S}^{-1}}^{\frac{n-2s}{2n}}(\omega)((-\Delta)^{-s}f)(\mathcal{S}^{-1}\omega).$$
Thus the sharp Hardy-Littlewood-Sobolev inequality on the sphere $\mathbb{S}^n$ states that
\begin{equation*}
	\| u\|_{L^{\frac{2n}{n+2s}}(\mathbb{S}^n)}^2\geq \mathcal S_{s,n} \langle\mathcal{P}_{2s}u, u\rangle
	\qquad\text{for all}\ u\in L^{\frac{2n}{n+2s}}(\mathbb{S}^n)
\end{equation*}
and the equality holds if and only if
$$u\in M:=\left\{c\left(\frac{\sqrt{1-|\xi|^2}}{1-\xi\cdot\omega}\right)^{\frac{n+2s}{2}}: c\in \mathbb{R},\  \xi\in \mathbb{R}^{n},\  |\xi|<0 \right\}.$$

Together with the fact that $|u(\omega)|^{\frac{-4s}{n+2s}}u(\omega)=J_{\mathcal{S}^{-1}}^{\frac{n-2s}{2n}}(\omega)|f(\mathcal{S}^{-1}\omega)|^{\frac{-4s}{n+2s}}f(\mathcal{S}^{-1}\omega),$
it is well known that all constant-sign solutions of the Euler-Lagrange equation related to the Hardy-Littlewood-Sobolev inequality on the sphere
$$\mathcal{P}_{2s}(u)=|u|^{-\frac{4s}{n+2s}}u$$
are in the space
$$\mathcal{M}:=\left\{\pm c_{n,s}\left(\frac{\sqrt{1-|\xi|^2}}{1-\xi\cdot\omega}\right)^{\frac{n+2s}{2}}: c\in \mathbb{R},\  \xi\in \mathbb{R}^{n},\  |\xi|<0 \right\},$$
where $$c_{n,s}=\left(\frac{\Gamma(\frac{n}{2}+s)}{\Gamma(\frac{n}{2}-s)}\right)^{\frac{n+2s}{4s}}.$$
Then it is easy to check that
$$d(u,\mathcal{M})=\inf\limits_{g\in \mathcal{M}}\|u-g\|_{L^{\frac{2n}{n+2s}}(\mathbb{S}^n)}=\inf\limits_{h\in \mathcal{M}_{HLS}}\|f-h\|_{L^{\frac{2n}{n+2s}}(\mathbb{R}^n)}=d(f,\mathcal{M}_{HLS}).$$
and
 $$\frac{\left\||f|^{-\frac{4s}{n+2s}}f-(-\Delta)^{-s}f\right\|_{L^\frac{2n}{n-2s}(\mathbb{R}^n)}}{d(f,\mathcal{M}_{HLS})}
=\frac{\left\||u|^{-\frac{4s}{n+2s}}u-\mathcal{P}_{2s}(u)\right\|_{L^{\frac{2n}{n-2s}}(\mathbb{S}^n)}}{d(u,\mathcal{M})},$$
which implies the following stabilities for critical points of HLS inequalities on $\mathbb{S}^n$ are equivalence to Lemma \ref{loc sta}.
\begin{lemma}\label{loc sta sph}
For all $u\in L^{\frac{2n}{n+2s}}(\mathbb{S}^n)$ with
$$\frac{1}{2}\mathcal{S}_{s,n}^{\frac{n}{2s}}\leq\|u\|^{\frac{2n}{n+2s}}_{L^{\frac{2n}{n+2s}}(\mathbb{S}^n)}\leq \frac{3}{2}\mathcal{S}_{s,n}^{\frac{n}{2s}},$$
then there holds
$$\left\||u|^{-\frac{4s}{n+2s}}u-\mathcal{P}_{2s}(u)\right\|_{L^{\frac{2n}{n-2s}}(\mathbb{S}^n)}\geq C_{n,s}d(u,\mathcal{M})+o(d(u,\mathcal{M})),$$
where $d(u,\mathcal{M})=\inf\limits_{g\in \mathcal{M}}\|u-g\|_{L^{\frac{2n}{n+2s}}(\mathbb{S}^n)}$ and
$$C_{n,s}=\min\{\frac{\Gamma(\frac{n}{2}-s+1)}{\Gamma(\frac{n}{2}+s+1)}\frac{s}{n/2+s+1}
|\mathbb{S}^n|^{-\frac{2s}{n}}\sqrt{\frac{2s}{n+2s}},\ \frac{\Gamma(n/2-s)}{\Gamma(n/2+s)}\frac{2s}{n+2s}|\mathbb{S}^n|^{-\frac{2s}{n}}\}$$
\end{lemma}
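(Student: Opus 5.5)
Since the statement is local, we may assume $d(u,\mathcal{M})$ is small. The plan follows the weak-decomposition--strong-stability strategy announced in the introduction.

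\emph{Step 1: weak decomposition.} Choose $\phi\in\mathcal{M}$ nearest to $u$ in the Hilbert norm $\langle\mathcal{P}_{2s}\cdot,\cdot\rangle^{1/2}$, i.e.\ in $H^{-s}(\mathbb{S}^n)$. By conformal invariance we may compose with a M\"obius map and assume $\phi=\pm c_{n,s}$ is constant, say $\phi=c_{n,s}$. Write $u=\phi+r$. Minimality in $H^{-s}$ gives the orthogonality relations $\langle \mathcal{P}_{2s}r,1\rangle=0$ and $\langle\mathcal{P}_{2s}r,Y_1\rangle=0$ for all first-order spherical harmonics $Y_1$. Since $\mathcal{P}_{2s}$ is diagonal on spherical harmonics, $r$ itself has no components of degree $0$ and $1$.

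\emph{Step 2: comparison lemma.} Show that $\|r\|_{L^{\frac{2n}{n+2s}}}$ and $d(u,\mathcal{M})$ are comparable as both tend to zero, in the sense that $\|r\|_{\frac{2n}{n+2s}}\le d(u,\mathcal{M})+o(d(u,\mathcal{M}))$. This is the Lemma \ref{infinitesimals} alluded to in the introduction. The argument should go as follows. Let $g$ be the $L^{\frac{2n}{n+2s}}$-minimizer, and consider the set $\mathcal{M}$, a smooth $(n+2)$-dimensional manifold near $\phi$. Since $H^{-s}$ is weaker than $L^{\frac{2n}{n+2s}}$, both $\|\phi-g\|_{H^{-s}}$ and the tangential component are small. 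One then shows that $\phi-g$ is of higher order compared with $r$, by expanding the parametrization of $\mathcal{M}$ to second order and using the orthogonality of $r$. I expect this to be the main obstacle, because there is no Pythagorean identity in $L^p$ with $p<2$. My first attempt will be a compactness/contradiction argument: normalize $r/\|r\|$ and show that any tangential defect vanishes in the limit.

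\emph{Step 3: strong stability via linearization.} With $\phi$ constant, set $p=\frac{n-2s}{n+2s}$ and linearize:
\[
|u|^{-\frac{4s}{n+2s}}u-\mathcal{P}_{2s}u=p\,c_{n,s}^{p-1}r-\mathcal{P}_{2s}r+N(r),
\]
where we used that $\phi$ solves the equation. The nonlinear term $N(r)$ is pointwise bounded by $C|r|^{p}$ where $|r|$ is large and by $C|r|^2$ where $|r|$ is small; this must be handled carefully because $p<1$. Next, test the linear part against $r$ (or against $|r|^{p-1}r$-type duals) and use the eigenvalues of $\mathcal{P}_{2s}$ on degree-$k$ harmonics,
\[
\lambda_k=\frac{\Gamma(k+\frac n2-s)}{\Gamma(k+\frac n2+s)},
\]
which decrease in $k$. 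Since $r$ has only components of degree $\ge2$, the gap satisfies $p\,c_{n,s}^{p-1}-\lambda_k\ge p\,c_{n,s}^{p-1}-\lambda_2>0$. This gives the bound
\[
\|\cdot\|_{L^{\frac{2n}{n-2s}}}\,\|r\|_{L^{\frac{2n}{n+2s}}}\ \ge\ \text{gap}\cdot\int r^2\ -\ o(\|r\|^2).
\]
The next task is to convert $\int r^2$ back into $\|r\|_{L^{\frac{2n}{n+2s}}}^2$ via H\"older on $\mathbb{S}^n$, which produces the factor $|\mathbb{S}^n|^{-2s/n}$. The pieces of $r$ that are large must be treated separately, using the $L^{\frac{2n}{n+2s}}$ smallness; this produces the $\sqrt{2s/(n+2s)}$ branch of the minimum. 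The two entries in $C_{n,s}$ should correspond to these two regimes (small $|r|$ versus large $|r|$).

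\emph{Step 4: conclusion.} Finally, combine with Step 2 to replace $\|r\|$ by $d(u,\mathcal{M})$. This yields the claimed lower bound with $C_{n,s}$ up to $o(d(u,\mathcal{M}))$.
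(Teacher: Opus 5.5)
Your architecture matches the paper's: $H^{-s}$-nearest bubble, M\"obius normalization to $\phi=c_{n,s}$, Funk--Hecke eigenvalues, and H\"older on $\mathbb{S}^n$. However, Step 1 contains an error that breaks Step 3.

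Here $\mathcal{M}$ is the set of \emph{solutions} of $\mathcal{P}_{2s}v=|v|^{-\frac{4s}{n+2s}}v$, i.e.\ bubbles of fixed amplitude $\pm c_{n,s}$. The equation is not homogeneous, so scalar multiples of a bubble are not in $\mathcal{M}$: for $\beta\neq\pm1$, $\beta c_{n,s}$ has nonzero residual. Thus $\mathcal{M}$ consists of two $(n+1)$-parameter families, not an $(n+2)$-dimensional manifold as you assume in Step 2. Consequently $H^{-s}$-minimality only gives orthogonality to the $n+1$ conformal directions, i.e.\ to the degree-one harmonics (Lemma \ref{decompostition}); nothing forces $\int_{\mathbb{S}^n}r\,d\omega=0$.

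The degree-zero mode is exactly where your linearized operator has the wrong sign. With $p=\frac{n-2s}{n+2s}$ and $\lambda_l=\frac{\Gamma(l+\frac n2-s)}{\Gamma(l+\frac n2+s)}$, one has $c_{n,s}^{p-1}=\lambda_0$ and $\lambda_1=p\lambda_0$. Hence $p\,c_{n,s}^{p-1}-\lambda_l$ equals $-\frac{4s}{n+2s}\lambda_0<0$ for $l=0$, equals $0$ for $l=1$, and is positive only for $l\ge2$. Testing the residual against $r$, as you propose, gives only the indefinite form $-\frac{4s}{n+2s}\lambda_0\|r_{(0)}\|_2^2+\sum_{l\ge2}(p\lambda_0-\lambda_l)\|r_{(l)}\|_2^2$, where $r_{(l)}$ is the degree-$l$ component. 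This vanishes to leading order for suitable $r=a+bY_2$, so the lower bound $\mathrm{gap}\int r^2$ does not follow.

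The missing idea is the paper's handling of this mode. Writing $u=c_{n,s}+\epsilon\tilde r_0$ with $\|\tilde r_0\|_{L^{\frac{2n}{n+2s}}}=1$, it splits according to the degree-zero coefficient $\tilde r_{0,1}$. If $\tilde r_{0,1}^2\le\frac{n}{n+2s}\|\tilde r_0\|_2^2$, it absorbs the mean into the non-solution base point $\beta c_{n,s}$ with $\beta=1+O(\epsilon)$. It then tests only against the degree-$\ge2$ part, which carries at least the fraction $\frac{2s}{n+2s}$ of the $L^2$ mass. That case split, not large values of $|r|$, is the origin of $\sqrt{2s/(n+2s)}$. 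If the mean dominates, it tests against $-\tilde r_0$, so the negative eigenvalue contributes $+\frac{4s}{n+2s}\lambda_0\tilde r_{0,1}^2$. This beats the degree-$\ge2$ terms and gives the second entry of $C_{n,s}$. The paper does not split $r$ into large and small values; it reduces to continuous $u$ via Lemma \ref{approxi} and Taylor-expands.

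Your Step 2 target is also off. The bound $\|r\|_{L^{\frac{2n}{n+2s}}}\le d(u,\mathcal{M})+o(d(u,\mathcal{M}))$ fails in general, because the $L^{\frac{2n}{n+2s}}$- and $H^{-s}$-nearest bubbles differ at first order. Indeed $\int|w|^{\frac{2n}{n+2s}-2}w\,Y_1\neq0$ generically, even when $w$ has no degree-one component. The bound is also unnecessary: $\|r\|\ge d$ is trivial, and only $\|r\|\lesssim d$ is needed to turn $o(\|r\|)$ into $o(d)$.

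Lemma \ref{infinitesimals} proves $\|r\|\lesssim d$ with no expansion or compactness. Let $g$ be the $L^{\frac{2n}{n+2s}}$-nearest bubble and $\phi$ the $H^{-s}$-nearest bubble; they necessarily have the same sign. The equation gives $\langle\mathcal{P}_{2s}(\phi-g),\phi-g\rangle=\langle\phi^{p}-g^{p},\phi-g\rangle\gtrsim\|\phi-g\|_{L^{\frac{2n}{n+2s}}}^2$. Meanwhile HLS and the triangle inequality bound the left side by $4\mathcal{S}_{s,n}^{-1}d(u,\mathcal{M})^2$. This argument again only works because $\mathcal{M}$ consists of solutions.
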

\medskip

The difficulty to prove the local stability Lemma \ref{loc sta sph} is the lack of Hilbert space structure for the distance $d(u,\mathcal{M})$, which prevents us from using the orthogonal property of spherical harmonics. To overcome the difficult, We will adopt the idea from our earlier work \cite{CLT2} to prove the optimal local stability of HLS inequalities, where we encounter the same difficulty. In fact, assume $u\in L^{\frac{2n}{n+2s}}(\mathbb{S}^n)$ with $\inf\limits_{h\in \mathcal{M}}\|u-h\|_{L^{\frac{2n}{n+2s}}(\mathbb{S}^n)}=o(1)$ and we will decompose $u$ in $H^{-s}(\mathbb{S}^n)$ instead of in $L^{\frac{2n}{n+2s}}(\mathbb{S}^n)$. More precisely, let $\varphi\in \mathcal{M}$ (see Lemma \ref{decompostition} for the existence of $\varphi$) such that
$$\inf\limits_{g\in \mathcal{M}}\langle\mathcal{P}_{2s}(u-g),(u-g)\rangle=\langle\mathcal{P}_{2s}(u-\varphi),(u-\varphi)\rangle,$$  then by the Hardy-Littlewood-Sobolev inequality, there holds
\begin{equation}\label{Sob dis}
\langle\mathcal{P}_{2s}(u-\varphi),(u-\varphi)\rangle\leq \mathcal{S}_{s,n}^{-1} \inf\limits_{h\in \mathcal{M}}\|u-h\|^2_{L^{\frac{2n}{n+2s}}(\mathbb{S}^n)}=o(1).
\end{equation}
It is obvious that $\|u-\varphi\|_{L^{\frac{2n}{n+2s}}(\mathbb{S}^n)}\geq \mathcal{S}_{s,n}^{1/2}\|(-\Delta)^{-s/2}(u-\varphi)\|_2=o(1)$. We will prove $\|u-\varphi\|_{L^{\frac{2n}{n+2s}}(\mathbb{S}^n)}=o(1)$. In fact, we will prove $\|u-\varphi\|_{L^{\frac{2n}{n+2s}}(\mathbb{S}^n)}$ and $\inf\limits_{h\in \mathcal{M}}\|u-h\|_{L^{\frac{2n}{n+2s}}(\mathbb{S}^n)}$ are infinitesimals of the same order.

\begin{lemma}\label{infinitesimals}
With above notations, there holds,
$$\|u-\varphi\|_{L^{\frac{2n}{n+2s}}(\mathbb{S}^n)}=o(1).$$
And more precisely,
$$\inf\limits_{h\in \mathcal{M}}\|u-h\|_{L^{\frac{2n}{n+2s}}(\mathbb{S}^n)}\leq \|u-\varphi\|_{L^{\frac{2n}{n+2s}}(\mathbb{S}^n)}\leq \left(1+2\mathcal{S}_{s,n}\sqrt{\frac{2(n+2s)}{n-2s}}\right)\inf\limits_{h\in \mathcal{M}}\|u-h\|_{L^{\frac{2n}{n+2s}}(\mathbb{S}^n)}$$
if $\inf\limits_{h\in \mathcal{M}}\|u-h\|_{L^{\frac{2n}{n+2s}}(\mathbb{S}^n)}=o(1)$.
\end{lemma}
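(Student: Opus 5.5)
The lower bound is trivial because $\varphi\in\mathcal{M}$. For the upper bound, fix $h\in\mathcal{M}$ with $\|u-h\|_{L^{\frac{2n}{n+2s}}(\mathbb{S}^n)}\le (1+\epsilon)\,\delta$, where $\delta:=\inf_{g\in\mathcal{M}}\|u-g\|_{L^{\frac{2n}{n+2s}}(\mathbb{S}^n)}=o(1)$. The triangle inequality gives
$$\|u-\varphi\|_{\frac{2n}{n+2s}}\le \|u-h\|_{\frac{2n}{n+2s}}+\|h-\varphi\|_{\frac{2n}{n+2s}}.$$
So it suffices to show that $\|h-\varphi\|_{\frac{2n}{n+2s}}\le C\,\langle\mathcal{P}_{2s}(h-\varphi),h-\varphi\rangle^{1/2}$, and then to control the $H^{-s}$ distance between $h$ and $\varphi$ by $\delta$.

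\textbf{Step 1: the $H^{-s}$ distance.} By the $H^{-s}$ triangle inequality, the minimality of $\varphi$, and \eqref{Sob dis} applied to $h$,
$$\langle\mathcal{P}_{2s}(h-\varphi),h-\varphi\rangle^{1/2}\le \langle\mathcal{P}_{2s}(u-h),u-h\rangle^{1/2}+\langle\mathcal{P}_{2s}(u-\varphi),u-\varphi\rangle^{1/2}\le 2\,\mathcal{S}_{s,n}^{-1/2}(1+\epsilon)\,\delta .$$
Thus $h$ and $\varphi$ are two elements of the smooth finite-dimensional manifold $\mathcal{M}$ that are $o(1)$-close in $H^{-s}$.

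\textbf{Step 2: both points lie near a common bubble.} Since $u$ is close to $h$ in $L^{\frac{2n}{n+2s}}$, the norms $\|h\|$ and $\|\varphi\|$ are bounded above and below. Bubbles concentrating at different parameters are asymptotically orthogonal in $H^{-s}$. Hence after a conformal transformation, which preserves both the $L^{\frac{2n}{n+2s}}$ norm and $\langle\mathcal{P}_{2s}\cdot,\cdot\rangle$, we may take $h=c_1\cdot 1$ and $\varphi=c_2\,(\text{bubble with parameter }\xi)$, with $|c_1-c_2|+|\xi|=o(1)$ and the same sign.

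\textbf{Step 3: comparing the two norms on the tangent space.} Expand $\varphi-h$ to first order:
$$\varphi-h=(c_2-c_1)+c_1\tfrac{n+2s}{2}\,\xi\cdot\omega+O(|c_1-c_2|^2+|\xi|^2).$$
This difference lies in spherical harmonics of degree $0$ and $1$, up to a second-order error. On those modes $\mathcal{P}_{2s}$ acts by the explicit eigenvalues $\lambda_0=\frac{\Gamma(n/2+s)}{\Gamma(n/2-s)}$ and $\lambda_1=\frac{\Gamma(n/2+s+1)}{\Gamma(n/2-s+1)}$, so $\langle\mathcal{P}_{2s}(h-\varphi),h-\varphi\rangle$ is comparable to $|c_1-c_2|^2+|\xi|^2$. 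The $L^{\frac{2n}{n+2s}}$ norm of $a+b\cdot\omega$ is also comparable to $|a|+|b|$. A direct estimate of $\|a+b\cdot\omega\|_{\frac{2n}{n+2s}}$ by $\|\cdot\|_{L^2}$ (Hölder, since $\frac{2n}{n+2s}<2$), followed by comparison with the $\mathcal{P}_{2s}$ quadratic form through these eigenvalues, yields a constant of the form $\sqrt{\frac{2(n+2s)}{n-2s}}\,\mathcal{S}_{s,n}^{1/2}$ after normalizing $|\mathbb{S}^n|$. Combined with Step 1, this gives
$$\|h-\varphi\|_{\frac{2n}{n+2s}}\le 2\,\mathcal{S}_{s,n}\sqrt{\tfrac{2(n+2s)}{n-2s}}\,\delta\,(1+o(1)).$$
Letting $\epsilon\to0$ and absorbing the $o(1)$ into the asymptotic statement gives the claimed bound and, in particular, $\|u-\varphi\|_{\frac{2n}{n+2s}}=o(1)$.

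The main obstacle is Step 2: showing rigorously that $H^{-s}$-closeness of two bubbles forces closeness of their parameters. This needs a compactness/nondegeneracy argument on $\mathcal{M}$ modulo the conformal group: a minimizing sequence in $H^{-s}$ cannot escape, because escaping bubbles become asymptotically $H^{-s}$-orthogonal. Once that is in place, matching the exact constant in Step 3 is routine bookkeeping with the eigenvalues.
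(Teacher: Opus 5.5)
Your argument is correct in outline, but after Step 1 (which is exactly the paper's (\ref{est 2})) it takes a genuinely different route. The paper never localizes parameters. It first runs a sign argument: opposite signs would force $\langle\mathcal{P}_{2s}(\phi-\varphi),\phi-\varphi\rangle>2\mathcal{S}_{s,n}^{\frac{n}{2s}}$, and this is also what justifies your ``same sign''. It then uses that both bubbles solve $\mathcal{P}_{2s}v=v^{p-1}$, $p=\frac{2n}{n+2s}$, to rewrite the $H^{-s}$ form as the monotone pairing $\int(\phi^{p-1}-\varphi^{p-1})(\phi-\varphi)$. This pairing is bounded below by $\frac{n-2s}{2(n+2s)}\mathcal{S}_{s,n}^{-1}\|\phi-\varphi\|_p^2$, using $a^q-b^q\ge qa^{q-1}(a-b)$, reverse H\"older, and $\int\phi^p=\int\varphi^p=\mathcal{S}_{s,n}^{\frac{n}{2s}}$. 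That comparison is global: it holds for any two same-sign bubbles, with no smallness and no Taylor remainder.

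Your perturbative route is heavier but yields a sharper local constant. Using a near-minimizer $h$ also spares you the attainment of the $L^p$ infimum, which Lemma \ref{decompostition} asserts but only proves for the $H^{-s}$ distance. Step 2 is easier than you fear, and no compactness argument is needed:
\begin{itemize}
\item Amplitudes in $\mathcal{M}$ are fixed, so $c_1=c_2$ automatically.
\item With $h=c_{n,s}$ and $\varphi=c_{n,s}b_\xi$, where $b_\xi=\bigl(\frac{\sqrt{1-|\xi|^2}}{1-\xi\cdot\omega}\bigr)^{\frac{n+2s}{2}}$, one has $\langle\mathcal{P}_{2s}(h-\varphi),h-\varphi\rangle=2\mathcal{S}_{s,n}^{\frac{n}{2s}}-2c_{n,s}^{p}\int b_\xi$.
\item $\int b_\xi<\int b_0$ for $\xi\neq0$ by strict H\"older, and $\int b_\xi\to0$ as $|\xi|\to1$, so small $H^{-s}$ distance forces $|\xi|$ small.
\end{itemize}

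Your constant bookkeeping in Step 3 needs correcting. By Funk--Hecke, $\mathcal{P}_{2s}$ acts on degree-$l$ harmonics by $\frac{\Gamma(l+n/2-s)}{\Gamma(l+n/2+s)}$; your $\lambda_0,\lambda_1$ are the reciprocals. With $\mu_1=\frac{\Gamma(n/2-s+1)}{\Gamma(n/2+s+1)}$, every nonzero degree-one $Y$ satisfies $\|Y\|_p^2<|\mathbb{S}^n|^{\frac{2s}{n}}\|Y\|_2^2=\mathcal{S}_{s,n}\frac{n+2s}{n-2s}\langle\mathcal{P}_{2s}Y,Y\rangle$. The inequality is strict because $|Y|$ is not constant. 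This strict margin, uniform by rotation invariance, is what legitimately absorbs your factor $1+o(1)$.

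However, once you multiply by Step 1's factor $2\mathcal{S}_{s,n}^{-1/2}$, the powers of $\mathcal{S}_{s,n}$ cancel. You actually get $\|h-\varphi\|_p\le 2\sqrt{\frac{n+2s}{n-2s}}\,\delta$ for small $\delta$. Even your stated intermediate constant only gives $2\sqrt{\frac{2(n+2s)}{n-2s}}\,\delta$. So the factor $\mathcal{S}_{s,n}$ in your final display does not follow from your steps. The same cancellation occurs in the paper's chain (\ref{est 2})--(\ref{est3*}). Only comparability with some fixed constant is used downstream in Lemma \ref{loc sta sph}.
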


\begin{proof}

By Lemma \ref{decompostition} there esists $\phi\in \mathcal{M}$ such that
$$\inf\limits_{h\in \mathcal{M}}\|u-h\|_{L^{\frac{2n}{n+2s}}(\mathbb{S}^n)}=\|u-\phi\|_{L^{\frac{2n}{n+2s}}(\mathbb{S}^n)}=o(1),$$
and since
\begin{equation}\label{est 1}
\|u-\varphi\|_{L^{\frac{2n}{n+2s}}(\mathbb{S}^n)}\leq \|u-\phi\|_{L^{\frac{2n}{n+2s}}(\mathbb{S}^n)}+\|\phi-\varphi\|_{L^{\frac{2n}{n+2s}}(\mathbb{S}^n)},
\end{equation}
then
we only need to prove that
$$\|\phi-\varphi\|_{L^{\frac{2n}{n+2s}}(\mathbb{S}^n)}=o(1).$$
First, we can derive that
$$\langle\mathcal{P}_{2s}(\phi-\varphi),(\phi-\varphi)\rangle^{1/2}=o(1)$$
from (\ref{Sob dis}) and
\begin{align}\label{est 2}\nonumber
& \langle\mathcal{P}_{2s}(\phi-\varphi),(\phi-\varphi)\rangle^{1/2}\\
& \leq \langle\mathcal{P}_{2s}(u-\varphi),(u-\varphi)\rangle^{1/2}+\langle\mathcal{P}_{2s}(u-\phi),(u-\phi)\rangle^{1/2}
 \leq 2\mathcal{S}_{s,n}^{-1/2}\|u-\phi\|_{L^{\frac{2n}{n+2s}}(\mathbb{S}^n)}.
\end{align}
We now verify that $\phi$ and $\varphi$ have the same sign. Write
$\phi=\sigma_\phi b_\phi$ and $\varphi=\sigma_\varphi b_\varphi$, where
$\sigma_\phi,\sigma_\varphi\in\{-1,1\}$ and $b_\phi,b_\varphi$ are positive
normalized bubbles. Every such bubble $b$ satisfies
$$
\langle\mathcal P_{2s}b,b\rangle
=\int_{\mathbb S^n}b^{\frac{2n}{n+2s}}\,d\omega
=\mathcal S_{s,n}^{\frac n{2s}}.
$$
If $\sigma_\phi=-\sigma_\varphi$, then the strict positivity of the integral
kernel of $\mathcal P_{2s}$ gives
\begin{align*}
\langle\mathcal P_{2s}(\phi-\varphi),\phi-\varphi\rangle
&=\langle\mathcal P_{2s}(b_\phi+b_\varphi),b_\phi+b_\varphi\rangle\\
&=2\mathcal S_{s,n}^{\frac n{2s}}
+2\langle\mathcal P_{2s}b_\phi,b_\varphi\rangle
>2\mathcal S_{s,n}^{\frac n{2s}}.
\end{align*}
On the other hand, by (\ref{est 2}), we have
\ref{decompostition} imply
$$
\langle\mathcal P_{2s}(\phi-\varphi),\phi-\varphi\rangle
\leq 4\mathcal S_{s,n}^{-1}
\|u-\phi\|_{L^{\frac{2n}{n+2s}}(\mathbb S^n)}^2
=o(1),
$$
which is a contradiction. Hence $\sigma_\phi=\sigma_\varphi$ and we may therefore
assume that both $\phi$ and $\varphi$ are positive. Since they satisfy the
Euler-Lagrange equation
\begin{equation*}
\mathcal{P}_{2s}v=|v|^{-\frac{4s}{n+2s}}v,
\end{equation*}
we have
\begin{align*}
& \langle\mathcal{P}_{2s}(\phi-\varphi),(\phi-\varphi)\rangle\\
& =\langle |\phi|^{\frac{2n}{n+2s}-2}\phi-|\varphi|^{\frac{2n}{n+2s}-2}\varphi,\phi-\varphi\rangle
=\langle \phi^{\frac{2n}{n+2s}-1}-\varphi^{\frac{2n}{n+2s}-1},\phi-\varphi\rangle.
\end{align*}

Using the inequality $a^q-b^q\geq q a^{q-1}(a-b)$ when $a\geq b>0,~~0<q<1$, H$\ddot{o}$lder's inequality for index less than $1$, we have

\begin{equation}\begin{split}\label{est3*}
 &\langle \phi^{\frac{2n}{n+2s}-1}-\varphi^{\frac{2n}{n+2s}-1}, \phi-\varphi\rangle\\
 &\ \ \geq \frac{n-2s}{n+2s}\left(\int_{\{\phi\geq \varphi\}} \phi^{\frac{2n}{n+2s}-2}(\phi-\varphi)^2dx+ \int_{\{\phi\leq  \varphi\}} \varphi^{\frac{2n}{n+2s}-2}(\varphi-\phi)^2dx\right)\\
 &\ \ \geq \frac{n-2s}{n+2s} \left(\int_{\{\phi\geq \varphi\}} |\phi|^{\frac{2n}{n+2s}}\right)^{-\frac{2s}{n}}\|(\phi-\varphi)\chi_{\{\phi\geq \varphi\}}\|_{L^{\frac{2n}{n+2s}}(\mathbb{S}^n)}^2\\
 &\ \ \ \ \ +\frac{n-2s}{n+2s}\left(\int_{\{\phi\leq \varphi\}} |\varphi|^{\frac{2n}{n+2s}}\right)^{-\frac{2s}{n}}\|(\phi-\varphi)\chi_{\{\phi\leq \varphi\}}\|_{L^{\frac{2n}{n+2s}}(\mathbb{S}^n)}^2\\
 &\ \ \geq \min\left\{\left(\int_{\mathbb{S}^n} |\phi|^{\frac{2n}{n+2s}}\right)^{-\frac{2s}{n}},\  \left(\int_{\mathbb{S}^n} |\varphi|^{\frac{2n}{n+2s}}\right)^{-\frac{2s}{n}}\right\} \frac{n-2s}{n+2s}2^{-1}\|\phi-\varphi\|_{L^{\frac{2n}{n+2s}}(\mathbb{S}^n)}^2\\
 & =\mathcal{S}^{-1}_{s,n} \frac{n-2s}{n+2s}2^{-1}\|\phi-\varphi\|_{L^{\frac{2n}{n+2s}}(\mathbb{S}^n)}^2.
 \end{split}\end{equation}

 Combining the estimates (\ref{est 1}), (\ref{est 2}) and (\ref{est3*}), we derive that
 $$\|u-\varphi\|_{L^{\frac{2n}{n+2s}}(\mathbb{S}^n)}\leq \left(1+2\mathcal{S}_{s,n}\sqrt{\frac{2(n+2s)}{n-2s}}\right)\inf\limits_{h\in M}\|u-h\|_{L^{\frac{2n}{n+2s}}(\mathbb{S}^n)}$$ and Lemma \ref{infinitesimals} is proved.
\end{proof}

Now let us prove the local stability for critical points of the HLS inequality on $\mathbb{S}^n$.
\begin{proof}
First we claim that we only need to assume $u\in C(\mathbb{S}^n)$. For any $u\in L^{\frac{2n}{n+2s}}(\mathbb{S}^n)$, there exist $\{u_k\}\subset C(\mathbb{S}^n)$ such that $\|u_k-u\|_{L^{\frac{2n}{n+2s}}(\mathbb{S}^n)}\rightarrow 0$. By Lemma \ref{approxi} we have
$$\left\||u_k^{-\frac{4s}{n+2s}}u_k-\mathcal{P}_{2s}u_k\right\|_{L^{\frac{2n}{n-2s}}(\mathbb{S}^n)}
\rightarrow\left\||u|^{-\frac{4s}{n+2s}}u-\mathcal{P}_{2s}u\right\|_{L^{\frac{2n}{n-2s}}(\mathbb{S}^n)}.$$
Along with the fact $d(u_k,\mathcal{M})\rightarrow d(u,\mathcal{M})$, we can assume $u\in C(\mathbb{S}^n)$.

Next we claim that for any $v=\beta c_{n,s}+\epsilon r_0\in C(\mathbb{S}^n)$ with $\beta^{-\frac{4s}{n+2s}}>\frac{n/2-s+1}{n/2+s+1}$, $r_0$ satisfying $\|r_0\|_{L^{\frac{2n}{n+2s}}(\mathbb{S}^n)}>0$
and
$$\int_{\mathbb{S}^n}r_0d\omega=0=\int_{\mathbb{S}^n}r_0Y_{1,j}d\omega=0,~~j=1,\cdots,n+1$$
there holds
\begin{equation}\label{spherical exp}
\left\||\beta c_{n,s}+\epsilon r_0|^{-\frac{4s}{n+2s}}(\beta c_{n,s}+\epsilon r_0)-\mathcal{P}_{2s}(\beta c_{n,s}+\epsilon r_0)\right\|_{L^{\frac{2n}{n-2s}}(\mathbb{S}^n)}\geq C_{n,s,\beta}\epsilon\frac{\|r_0\|^2_{L^2(\mathbb{S}^n)}}{\|r_0\|_{L^{\frac{2n}{n+2s}}(\mathbb{S}^n)}}+o(\epsilon),
\end{equation}
where
$$C_{n,\beta,s}=\frac{\Gamma(\frac{n}{2}-s+1)}{\Gamma(\frac{n}{2}+s+1)}\left(\beta^{-\frac{4s}{n+2s}}-\frac{n/2-s+1}{n/2+s+1}\right).$$

Since
\begin{align*}
& \left\||\beta c_{n,s}+\epsilon r_0|^{-\frac{4s}{n+2s}}(\beta c_{n,s}+\epsilon r_0)-\mathcal{P}_{2s}(\beta c_{n,s}+\epsilon r_0)\right\|_{L^{\frac{2n}{n-2s}}(\mathbb{S}^n)}\\
& \geq \frac{1}{\|r_0\|_{L^{\frac{2n}{n+2s}}(\mathbb{S}^n)}}\int_{\mathbb{S}^n}[|\beta c_{n,s}+\epsilon r_0|^{-\frac{4s}{n+2s}}(\beta c_{n,s}+\epsilon r_0)-\mathcal{P}_{2s}(\beta c_{n,s}+\epsilon r_0)]r_0d\omega.
\end{align*}
Thus  expanding $r_0$ in terms of spherical harmonics $r_0=\sum_{l\geq 2}\sum_{m=1}^{N(n,l)}r_{l,m}Y_{l,m}$ with $r_{l,m}=\int_{\mathbb{S}^n}r_0Y_{l,m}d\omega$,
by Taylor expansion we have
\begin{align*}
& \int_{\mathbb{S}^n}[|\beta c_{n,s}+\epsilon r_0|^{-\frac{4s}{n+2s}}(\beta c_{n,s}+\epsilon r_0)]r_0d\omega\\
& =(\beta c_{n,s})^{\frac{2n}{n+2s}}\int_{\mathbb{S}^n}r_0d\omega+\epsilon\frac{n-2s}{n+2s}(\beta c_{n,s})^{-\frac{4s}{n+2s}}\int_{\mathbb{S}^n}r^2_0d\omega+o(\epsilon),\\
& =\epsilon\frac{n-2s}{n+2s}(\beta c_{n,s})^{-\frac{4s}{n+2s}}\sum_{l\geq 2}\sum_{m=1}^{N(n,l)}r^2_{l,m}+o(\epsilon)
\end{align*}
On the other hand, the Funk-Heck formula implies that if $Y$ is a spherical harmonic of degree $l$, then
$$\mathcal{P}_{2s}Y=\frac{\Gamma(l+n/2-s)}{\Gamma(l+n/2+s)}Y.$$
Thus
\begin{align*}
& \int_{\mathbb{S}^n}\mathcal{P}_{2s}(\beta c_{n,s}+\epsilon r_0)r_0d\omega=\epsilon\sum_{l\geq 2}\frac{\Gamma(l+n/2-s)}{\Gamma(l+n/2+s)}\sum_{m=1}^{N(n,l)}r^2_{l,m}.
\end{align*}
Recall that
$$c_{n,s}=\left(\frac{\Gamma(\frac{n}{2}+s)}{\Gamma(\frac{n}{2}-s)}\right)^{\frac{n+2s}{4s}},$$
then when $l\geq 2$ we have
\begin{align*}
& (\beta c_{n,s})^{-\frac{4s}{n+2s}}\frac{n-2s}{n+2s}-\frac{\Gamma(l+n/2-s)}{\Gamma(l+n/2+s)}\\
& =\frac{\Gamma(\frac{n}{2}-s)}{\Gamma(\frac{n}{2}+s)}\frac{n/2-s}{n/2+s}\left(\beta^{-\frac{4s}{n+2s}}-\frac{(n/2-s+1)\cdots(n/2-s+l-1)}{(n/2+s+1)\cdots(n/2+s+l-1)}\right)\\
& \geq \frac{\Gamma(\frac{n}{2}-s+1)}{\Gamma(\frac{n}{2}+s+1)}\left(\beta^{-\frac{4s}{n+2s}}-\frac{n/2-s+1}{n/2+s+1}\right).
\end{align*}
Thus we can prove the claim
\begin{align*}
& \left\||c_{n,s}+\epsilon r_0|^{-\frac{4s}{n+2s}}(c_{n,s}+\epsilon r_0)-\mathcal{P}_{2s}(c_{n,s}+\epsilon r_0)\right\|_{L^{\frac{2n}{n-2s}}(\mathbb{S}^n)}\\
& \geq
\epsilon\frac{\Gamma(\frac{n}{2}-s+1)}{\Gamma(\frac{n}{2}+s+1)}\left(\beta^{-\frac{4s}{n+2s}}-\frac{n/2-s+1}{n/2+s+1}\right)\frac{\|r_0\|^2_{L^2(\mathbb{S}^n)}}{\|r_0\|_{L^{\frac{2n}{n+2s}}(\mathbb{S}^n)}}+o(\epsilon).
\end{align*}

By Lemma \ref{decompostition}, we know that there exists $\phi\in \mathcal{M}$ such that
$$\inf\limits_{h\in \mathcal{M}}\langle\mathcal{P}_{2s}(u-h),u-h\rangle=\langle\mathcal{P}_{2s}(u-\phi),u-\phi\rangle,$$
where
$\phi=\pm c_{n,s}J_{\Phi}^{\frac{2n}{n+2s}}$ with $\Phi$ being a conformal transformation on the sphere and the function $r=u-\phi$ satisfies
$$r\bot span\{J_{\Phi}^{\frac{n+2s}{2n}}Y_{1,i}\circ\Phi,~~i=1,\cdots,n+1\},$$
in the "inner product" $\langle\mathcal{P}_{2s}\cdot,\cdot\rangle$. Since $d(u,\mathcal{M})=o(1)$,  by Lemma \ref{infinitesimals} we have $$\epsilon:=\|r\|_{L^{\frac{2n}{n+2s}}(\mathbb{S}^n)}=o(1).$$
Since the stability inequality is conformal invariant, we may assume
$$u=c_{n,s}+\epsilon \widetilde{r_0}~~~(\text{otherwise consider}~~-u),$$ where $\widetilde{r_0}$ satisfy $\|\widetilde{r_0}\|_{L^{\frac{2n}{n+2s}}(\mathbb{S}^n)}=1$
and
$$\int_{\mathbb{S}^n}\widetilde{r_0}Y_{1,j}d\omega=0,~~j=1,\cdots,n+1.$$
Expanding $\widetilde{r_0}$ in terms of spherical harmonics $\widetilde{r_0}=\widetilde{r_{0,1}}Y_{0,1}+\sum_{l\geq 2}\sum_{m=1}^{N(n,l)}\widetilde{r_{l,m}}Y_{l,m}$ with $\widetilde{r_{l,m}}=\int_{\mathbb{S}^n}\widetilde{r_0}Y_{l,m}d\omega$,
then $\|\widetilde{r_0}\|^2_{L^2}=\widetilde{r_{0,1}}^2+\sum_{l=1}^{\infty}\sum_{m=1}^{N(n,l)}\widetilde{r_{l,m}}^2$. We distinguish between the case $\widetilde{r_{0,1}}^2\leq \frac{n}{n+2s}\|\widetilde{r_0}\|^2_{L^2(\mathbb{S}^n)}$ and the case $\widetilde{r}^2_{0,1}\geq \frac{n}{n+2s}\|\widetilde{r}_0\|^2_{{L^2(\mathbb{S}^n})}$.
\vskip0.1cm

If $\widetilde{r_{0,1}}^2\leq \frac{n}{n+2s}\|\widetilde{r_0}\|^2_{L^2(\mathbb{S}^n)}$, we write $u=c_{n,s}+\epsilon r_{0,1}+\epsilon(r_{0}-r_{0,1})$, by (\ref{spherical exp})
we have
\begin{align}\begin{split}\label{spherical exp 1}
& \left\||u|^{-\frac{4s}{n+2s}}u-\mathcal{P}_{2s}(u)\right\|_{L^{\frac{2n}{n-2s}}(\mathbb{S}^n)}\\
& \geq\epsilon\frac{\Gamma(\frac{n}{2}-s+1)}{\Gamma(\frac{n}{2}+s+1)}\left(\beta^{-\frac{4s}{n+2s}}-\frac{n/2-s+1}{n/2+s+1}\right)
\frac{\|\widetilde{r_0}-\widetilde{r_{0,1}}Y_{0,1}\|^2_{L^2(\mathbb{S}^n)}}{\|\widetilde{r_0}-\widetilde{r_{0,1}}Y_{0,1}\|_{L^{\frac{2n}{n+2s}}(\mathbb{S}^n)}}+o(\epsilon),
\end{split}\end{align}
where $\beta=1+\epsilon\frac{ r_{0,1}}{c_{n,s}}$ with $\beta^{-\frac{4s}{n+2s}}-\frac{n/2-s+1}{n/2+s+1}>\frac{s}{n/2+s+1}$ for sufficiently small $\epsilon$.

Using (\ref{spherical exp 1}), H\"{o}lder inequality
$$\|\widetilde{r_0}-\widetilde{r_{0,1}}Y_{0,1}\|_{L^2(\mathbb{S}^n)}\geq |\mathbb{S}^n|^{-\frac{s}{n}}\|\widetilde{r_0}-\widetilde{r_{0,1}}Y_{0,1}\|_{L^{\frac{2n}{n+2s}}(\mathbb{S}^n)}$$
 and the fact
 $$\|\widetilde{r_0}-\widetilde{r_{0,1}}Y_{0,1}\|^2_{L^2(\mathbb{S}^n)}=\|r_0\|_{L^2(\mathbb{S}^n)}^2-r_{0,1}^2\geq \frac{2s}{n+2s}\|\widetilde{r_0}\|_{L^2(\mathbb{S}^n)}^2\geq|\mathbb{S}^n|^{-\frac{2s}{n}}\frac{2s}{n+2s},$$
 we can obtain
\begin{align}\begin{split}\label{case 1}
& \left\||u|^{-\frac{4s}{n+2s}}u-\mathcal{P}_{2s}(u)\right\|_{L^{\frac{2n}{n-2s}}(\mathbb{S}^n)}\\
& \geq\epsilon\frac{\Gamma(\frac{n}{2}-s+1)}{\Gamma(\frac{n}{2}+s+1)}\frac{s}{n/2+s+1}
|\mathbb{S}^n|^{-\frac{2s}{n}}\sqrt{\frac{2s}{n+2s}}+o(\epsilon)
\end{split}\end{align}
for $\epsilon$ sufficiently small.

If $\widetilde{r}^2_{0,1}\geq \frac{n}{n+2s}\|\widetilde{r}_0\|^2_{{L^2(\mathbb{S}^n})}$, since
\begin{align*}
& \left\||u|^{-\frac{4s}{n+2s}}u-\mathcal{P}_{2s}(u)\right\|_{L^{\frac{2n}{n-2s}}(\mathbb{S}^n)}\\
& =\left\|| c_{n,s}+\epsilon \widetilde{r_0}|^{-\frac{4s}{n+2s}}( c_{n,s}+\epsilon \widetilde{r_0})-\mathcal{P}_{2s}( c_{n,s}+\epsilon \widetilde{r_0})\right\|_{L^{\frac{2n}{n-2s}}(\mathbb{S}^n)}\\
& \geq -\int_{\mathbb{S}^n}[| c_{n,s}+\epsilon\widetilde{ r_0}|^{-\frac{4s}{n+2s}}( c_{n,s}+\epsilon \widetilde{r_0})-\mathcal{P}_{2s}( c_{n,s}+\epsilon \widetilde{r_0})]\widetilde{r_0}d\omega.
\end{align*}
Again expanding $r_0$ in terms of spherical harmonics $\widetilde{r_0}=\widetilde{r_{0,1}}Y_{0,1}+\sum_{l\geq 2}\sum_{m=1}^{N(n,l)}\widetilde{r_{l,m}}Y_{l,m}$ with $\widetilde{r_{l,m}}=\int_{\mathbb{S}^n}r_0Y_{l,m}d\omega$,
by Taylor expansion we have
\begin{align*}
& -\int_{\mathbb{S}^n}[| c_{n,s}+\epsilon \widetilde{r_0}|^{-\frac{4s}{n+2s}}( c_{n,s}+\epsilon \widetilde{r_0})]\widetilde{r_0}d\omega\\
& =-( c_{n,s})^{\frac{n-2s}{n+2s}}\int_{\mathbb{S}^n}\widetilde{r_0}d\omega-\epsilon\frac{n-2s}{n+2s}( c_{n,s})^{-\frac{4s}{n+2s}}\int_{\mathbb{S}^n}\widetilde{r_0}^2d\omega+o(\epsilon),\\
& =-( c_{n,s})^{\frac{n-2s}{n+2s}}\int_{\mathbb{S}^n}\widetilde{r_0}d\omega-\epsilon\frac{n-2s}{n+2s}( c_{n,s})^{-\frac{4s}{n+2s}}\sum_{l\neq 1}\sum_{m=1}^{N(n,l)}\widetilde{r_{l,m}}^2+o(\epsilon).
\end{align*}
On the other hand,
\begin{align*}
& \int_{\mathbb{S}^n}\mathcal{P}_{2s}( c_{n,s}+\epsilon \widetilde{r_0})\widetilde{r_0}d\omega=c_{n,s}\frac{\Gamma(n/2-s)}{\Gamma(n/2+s)}\int_{\mathbb{S}^n}r_0d\omega+\epsilon\sum_{l\neq1}\frac{\Gamma(l+n/2-s)}{\Gamma(l+n/2+s)}\sum_{m=1}^{N(n,l)}\widetilde{r_{l,m}}^2.
\end{align*}
By the above estimates, we can obtain
\begin{align*}
& \left\||u|^{-\frac{4s}{n+2s}}u-\mathcal{P}_{2s}(u)\right\|_{L^{\frac{2n}{n-2s}}(\mathbb{S}^n)}\\
& \geq\epsilon\sum_{l\neq1}\left(\frac{\Gamma(l+n/2-s)}{\Gamma(l+n/2+s)}-\frac{n-2s}{n+2s}( c_{n,s})^{-\frac{4s}{n+2s}}\right)\sum_{m=1}^{N(n,l)}\widetilde{r_{l,m}}^2+o(\epsilon)\\
& =\epsilon \frac{\Gamma(n/2-s)}{\Gamma(n/2+s)}\frac{4s}{n+2s}\widetilde{r_{0,1}}^2+\epsilon\sum_{l\geq 2}\left(\frac{\Gamma(l+n/2-s)}{\Gamma(l+n/2+s)}-\frac{n-2s}{n+2s}( c_{n,s})^{-\frac{4s}{n+2s}}\right)\sum_{m=1}^{N(n,l)}\widetilde{r_{l,m}}^2\\
& \geq \epsilon \frac{\Gamma(n/2-s)}{\Gamma(n/2+s)}\frac{4s}{n+2s}\widetilde{r_{0,1}}^2-\epsilon\frac{n-2s}{n+2s}( c_{n,s})^{-\frac{4s}{n+2s}}\sum_{l\geq 2}\sum_{m=1}^{N(n,l)}\widetilde{r_{l,m}}^2.
\end{align*}
Since $\widetilde{r_{0,1}}^2\geq \frac{n}{n+2s}\|\widetilde{r_0}\|^2_{L^2(\mathbb{S}^n)}\geq \frac{n}{n+2s}|\mathbb{S}^n|^{-\frac{2s}{n}}$, then
\begin{align}\begin{split}\label{case 2}
& \left\||u|^{-\frac{4s}{n+2s}}u-\mathcal{P}_{2s}(u)\right\|_{L^{\frac{2n}{n-2s}}(\mathbb{S}^n)}\\
& \geq \epsilon\frac{\Gamma(n/2-s)}{\Gamma(n/2+s)}\frac{4s}{n+2s}\frac{n}{n+2s}\|\widetilde{r_0}\|_{L^2}-\epsilon\frac{n-2s}{n+2s}\frac{\Gamma(n/2-s)}{\Gamma(n/2+s)}\frac{2s}{n+2s}\|\widetilde{r_0}\|^2_{2}\\
& =\epsilon\frac{\Gamma(n/2-s)}{\Gamma(n/2+s)}\frac{2s}{n+2s}|\mathbb{S}^n|^{-\frac{2s}{n}}+o(\epsilon).
\end{split}\end{align}
Along with the estimates (\ref{case 1}), (\ref{case 2}) and Lemma \ref{infinitesimals}, we complete the proof of Lemma \ref{loc sta sph}.
\end{proof}

Now we are in the position to prove the stability for critical points of the HLS inequality, i.e. Theorem \ref{stability}.

\begin{proof}[The proof of Theorem \ref{stability}]
Assume that Theorem \ref{stability} is not true. Then we can find a sequence $\{f_k\}$ with $\frac{1}{2}\mathcal{S}_{s,n}^{\frac{n}{2s}}\leq\|f_k\|^{\frac{2n}{n+2s}}_{L^{\frac{2n}{n+2s}}(\mathbb{R}^n)}\leq \frac{3}{2}\mathcal{S}_{s,n}^{\frac{n}{2s}}$ such that
$$\frac{\left\||f_k|^{-\frac{4s}{n+2s}}f_k-(-\Delta)^{-s}f_k\right\|_{L^\frac{2n}{n-2s}(\mathbb{R}^n)}}{d(f_k,\mathcal{M}_{HLS})}\rightarrow 0,$$
as $k\rightarrow \infty$. If $d(f_k,\mathcal{M}_{HLS})\rightarrow 0$ as $k\rightarrow +\infty$, then it follows from Lemma \ref{loc sta sph} that
$$\frac{\left\||f_k|^{-\frac{4s}{n+2s}}f_k-(-\Delta)^{-s}f_k\right\|_{L^\frac{2n}{n-2s}(\mathbb{R}^n)}}{d(f_k,\mathcal{M}_{HLS})}\geq C_{n,s},$$ which is a contradiction.
If $d(f_k,\mathcal{M}_{HLS})\rightarrow C_0\neq 0$ as $k\rightarrow +\infty$, then $$\left\||f_k|^{-\frac{4s}{n+2s}}f-(-\Delta)^{-s}f\right\|_{L^\frac{2n}{n-2s}(\mathbb{R}^n)}\rightarrow 0.$$
By lemma \ref{deHLS}, there exists a subsequence of $\{f_k\}$ (still denote by $\{f_k\}$) such that $d(f_k,\mathcal{M}_{HLS})\rightarrow 0$, which is a contradiction. Then we accomplish the proof of Theorem \ref{stability}.
\end{proof}

Then by a dual method and the Sobolev and Hardy-Littlewood-Sobolev inequality, we are in the position to prove the stability for critical points of the Sobolev inequality, i.e. Theorem \ref{sob-stability}.

\begin{proof}[The proof of Theorem \ref{sob-stability}]
Let $f=|g|^{\frac{4s}{n-2s}}g$ and through the fractional Sobolev inequality, we can get
$$\int_{\mathbb{R}^n}|f|^{\frac{2n}{n+2s}}dx=\int_{\mathbb{R}^n}|g|^{\frac{2n}{n-2s}}dx\leq (\mathcal{S}_{s,n}^{-1})^{\frac{n}{n-2s}} \big(\int_{\mathbb{R}^n}|(-\Delta)^{\frac{s}{2}}g|^2dx\big)^{\frac{n}{n-2s}}.$$
This together with $\int_{\mathbb{R}^n}|(-\Delta)^{\frac{s}{2}}g|^2dx\leq \big(\frac{3}{2}\big)^{\frac{n-2s}{n}}\mathcal{S}_{s,n}^{\frac{n}{2s}}$ gives that
$$\int_{\mathbb{R}^n}|f|^{\frac{2n}{n+2s}}dx\leq \frac{3}{2}\mathcal{S}_{s,n}^{\frac{n}{2s}}.$$
If $\int_{\mathbb{R}^n}|f|^{\frac{2n}{n+2s}}dx\geq\big(\frac{1}{4}\big)^{\frac{n-2s}{n+2s}} \mathcal{S}_{s,n}^{\frac{n}{2s}}$, we can prove the stability for critical points of the Sobolev inequality though a duality argument. In fact, by the Sobolev inequality and Theorem \ref{stability} (see remark \ref{remark of thm}), we have
\begin{align*}
& \|(-\Delta)^{s}g-|g|^{\frac{4s}{n-2s}}g\|_{\dot H^{-s}(\mathbb{R}^n)}=\|(-\Delta)^{s/2}(g-(-\Delta)^{-s}f)\|_{L^2(\mathbb{R}^n)}\\
& \gtrsim  \|g-(-\Delta)^{-s}f\|_{L^{\frac{2n}{n-2s}}(\mathbb{R}^n)}=\||f|^{-\frac{4s}{n+2s}}f-(-\Delta)^{-s}f\|_{L^{\frac{2n}{n-2s}}(\mathbb{R}^n)}\\
& \gtrsim \inf_{h\in M_{HLS}}\|f-h\|_{L^\frac{2n}{n+2s}(\mathbb{R}^n)}.
\end{align*}
Then for any $\epsilon>0$, there exist a $h_0\in \mathcal{M}_{HLS}$ such that
$$\|(-\Delta)^{s}g-|g|^{\frac{4s}{n-2s}}g\|_{\dot H^{-s}(\mathbb{R}^n)}\gtrsim \|f-h_0\|_{L^\frac{2n}{n+2s}(\mathbb{R}^n)}-\epsilon,$$
therefor, using the Hardy-Littlewood-Sobolev inequality we can obtain
\begin{align*}
& \|(-\Delta)^{s}g-|g|^{\frac{4s}{n-2s}}g\|_{\dot H^{-s}(\mathbb{R}^n)}\gtrsim \|f-h_0\|_{L^\frac{2n}{n+2s}(\mathbb{R}^n)}-\epsilon+\|(-\Delta)^{s}g-|g|^{\frac{4s}{n-2s}}g\|_{\dot H^{-s}(\mathbb{R}^n)}\\
& \gtrsim \|(-\Delta)^{-s/2}(f-h_0)\|_{L^2(\mathbb{R}^n)}+\|(-\Delta)^{s/2}(g-(-\Delta)^{-s}f)\|_{L^2(\mathbb{R}^n)}-\epsilon\\
& \gtrsim \|g-(-\Delta)^{-s}h_0\|_{\dot H^{s}(\mathbb{R}^n)}-\epsilon \gtrsim \inf_{h\in \mathcal{M}_s}\|g-h\|_{\dot H^{s}(\mathbb{R}^n)}-\epsilon.
\end{align*}
If $\int_{\mathbb{R}^n}|f|^{\frac{2n}{n+2s}}dx\leq\big(\frac{1}{4}\big)^{\frac{n-2s}{n+2s}} \mathcal{S}_{s,n}^{\frac{n}{2s}}$, there holds
\begin{align*}
& \|(-\Delta)^{s}g-|g|^{\frac{4s}{n-2s}}g\|_{\dot H^{-s}(\mathbb{R}^n)}=\|(-\Delta)^{s/2}(g-(-\Delta)^{-s}f)\|_{L^2(\mathbb{R}^n)}\\
& \geq \|(-\Delta)^{s/2}g\|_{L^2}-\|(-\Delta)^{-s/2}f)\|_{L^2(\mathbb{R}^n)}\geq \|(-\Delta)^{s/2}g\|_{L^2}-\mathcal{S}_{s,n}^{-1/2}\|f\|_{L^{\frac{2n}{n+2s}}(\mathbb{R}^n)}\\
& \geq \big(\frac{1}{2}\big)^{\frac{n-2s}{2n}} \mathcal{S}_{s,n}^{\frac{n}{4s}}-\big(\frac{1}{4}\big)^{\frac{n-2s}{2n}} \mathcal{S}_{s,n}^{\frac{n}{4s}}.
\end{align*}
On the other hand
$$\inf_{h\in \mathcal{M}_s}\|g-h\|_{\dot H^{s}(\mathbb{R}^n)}\leq \|g\|_{\dot H^{s}(\mathbb{R}^n)}+\mathcal{S}_{s,n}^{\frac{n}{4s}}\leq (\big(\frac{3}{2}\big)^{\frac{n-2s}{2n}}+1)\mathcal{S}_{s,n}^{\frac{n}{4s}},$$
which implies $\|(-\Delta)^{s}g-|g|^{\frac{4s}{n-2s}}g\|_{\dot H^{-s}(\mathbb{R}^n)}\gtrsim \inf_{h\in \mathcal{M}_s}\|g-h\|_{\dot H^{s}(\mathbb{R}^n)}$.
Then we have completed the proof of Theorem \ref{sob-stability}.
\end{proof}

\section{Appendix}
In this section, we will prove some lemmas which were used in our proof of the main Theorem.
First we prove the following decomposition lemma.

\begin{lemma}\label{decompostition}
For any $g\in L^{\frac{2n}{n+2s}}(\mathbb{S}^n)$, with
$$\inf_{h\in \mathcal{M}}\|g-h\|^2_{L^{\frac{2n}{n+2s}}(\mathbb{S}^n)}\leq \frac{1}{2}\mathcal{S}_{s,n}^{\frac{n+2s}{2s}}$$
then there exists $\varphi\in \mathcal{M}$ such that
$$\inf_{h\in \mathcal{M}}\|g-h\|_{L^{\frac{2n}{n+2s}}(\mathbb{S}^n)}=\|g-\varphi\|_{L^{\frac{2n}{n+2s}}(\mathbb{S}^n)}$$
and there exists $\phi\in \mathcal{M} $ such that
$$\inf_{h\in \mathcal{M}}\langle\mathcal{P}_{2s}(g-h),g-h\rangle=\langle\mathcal{P}_{2s}(g-\phi),g-\phi\rangle.$$
Moreover, denote $g=\phi+r$ and $\phi=c_{n,s}J_{\Phi}^{\frac{n+2s}{2n}}$ or $-c_{n,s}J_{\Phi}^{\frac{n+2s}{2n}}$, where $\Phi$ is a conformal transformation on the sphere, then
$$r\bot \text{span}\{J_{\Phi}^{1/2}Y_{1,i}\circ \Phi,~~i=1,\cdots,n+1\}.$$
\end{lemma}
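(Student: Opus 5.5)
The plan is to prove the two existence statements by a direct-method argument on the finite-dimensional manifold $\mathcal{M}$, and then read off the orthogonality condition from the first-order optimality conditions. Parametrize $\mathcal{M}$ by the sign $\pm$ and the conformal parameter $\xi\in B^{n+1}$ (the open unit ball), writing $h_{\pm,\xi}=\pm c_{n,s}\big(\frac{\sqrt{1-|\xi|^2}}{1-\xi\cdot\omega}\big)^{\frac{n+2s}{2}}$. For the $L^{\frac{2n}{n+2s}}$ minimizer, take a minimizing sequence $h_k=h_{\sigma_k,\xi_k}$. Passing to a subsequence, the sign is fixed and either $\xi_k\to\xi_\infty\in B^{n+1}$, or $|\xi_k|\to1$.

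In the first case, continuity of $\xi\mapsto h_{\pm,\xi}$ into $L^{\frac{2n}{n+2s}}$ (by dominated convergence on compact subsets of the ball) gives a minimizer. The main obstacle is therefore to exclude concentration $|\xi_k|\to 1$, and this is where the smallness hypothesis enters. If the bubbles concentrate at a point, then $h_k\to0$ a.e. and $h_k\rightharpoonup 0$ weakly. By a Brezis--Lieb type splitting for $p=\frac{2n}{n+2s}<2$ (valid since $h_k\to 0$ a.e. with fixed norm), we get
\[
\|g-h_k\|_p^p=\|g\|_p^p+\|h_k\|_p^p+o(1)=\|g\|_p^p+\mathcal{S}_{s,n}^{\frac n{2s}}+o(1).
\]
Hence $\inf\ge \mathcal{S}_{s,n}^{\frac n{2s}\cdot\frac{2}{p}}=\mathcal{S}_{s,n}^{\frac{n+2s}{2s}}$ in squared norm. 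This contradicts the assumption $\inf\le\frac12\mathcal{S}_{s,n}^{\frac{n+2s}{2s}}$. I would check the normalization $\|h\|_p^p=\mathcal{S}_{s,n}^{n/(2s)}$, which follows from the Euler--Lagrange equation and equality in HLS.

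The same scheme handles $\langle \mathcal{P}_{2s}(g-h),g-h\rangle$. Here $\mathcal{P}_{2s}^{1/2}h_k\rightharpoonup0$ in $L^2$, so
\[
\langle\mathcal{P}_{2s}(g-h_k),g-h_k\rangle=\langle\mathcal{P}_{2s}g,g\rangle+\mathcal{S}_{s,n}^{\frac n{2s}}+o(1).
\]
By HLS, the infimum of this functional is at most $\mathcal{S}_{s,n}^{-1}\inf\|g-h\|_p^2\le\frac12\mathcal{S}_{s,n}^{\frac n{2s}}$, so concentration is again excluded. This Hilbertian version is cleaner, since it only uses weak convergence.

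For the orthogonality, write $\phi=\pm c_{n,s}J_\Phi^{\frac{n+2s}{2n}}$ and perturb $\Phi$ along the $(n+1)$-parameter family of conformal dilations/translations. The tangent vectors at $\phi$ of the family $h_{\pm,\xi}$ are, after pulling back by $\Phi$, the functions $J_\Phi^{\frac{n+2s}{2n}}Y_{1,i}\circ\Phi$. This is because differentiating the bubble $(\ldots)^{\frac{n+2s}{2}}$ at $\xi=0$ gives $\frac{n+2s}{2}\,c_{n,s}\,\omega_i$, and conformal covariance transports this to a general $\Phi$. Since $\xi$ is an interior critical point, the derivative of $\xi\mapsto\langle\mathcal{P}_{2s}(g-h_\xi),g-h_\xi\rangle$ vanishes, giving $\langle\mathcal{P}_{2s}r,\partial_{\xi_i}h\rangle=0$. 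Differentiation under the integral is justified by the smoothness of the bubbles in $\xi$. This is orthogonality in the $\mathcal{P}_{2s}$ pairing, which is the sense used in the proof of Lemma \ref{loc sta sph}. By the intertwining $\mathcal{P}_{2s}(J_\Phi^{\frac{n+2s}{2n}}Y\circ\Phi)=J_\Phi^{\frac{n-2s}{2n}}(\mathcal{P}_{2s}Y)\circ\Phi$ and the Funk--Hecke formula, it is equivalent to $L^2$-orthogonality of $r$ to $J_\Phi^{\frac{n-2s}{2n}}Y_{1,i}\circ\Phi$. I would state the conclusion in the form matching the weight exponent intended in the statement (the $J_\Phi^{1/2}$ written there presumably refers to one of these normalizations). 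One could also include the amplitude, but $\mathcal{M}$ fixes the constant $c_{n,s}$, so only the $n+1$ conformal directions give conditions.
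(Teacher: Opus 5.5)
Your proposal is correct and follows essentially the same route as the paper: you minimize over the parameter ball $B^{n+1}$, rule out boundary concentration because the bubble's energy $\mathcal{S}_{s,n}^{\frac{n}{2s}}$ decouples from $g$, and read off orthogonality in the $\mathcal{P}_{2s}$ pairing from the first-order condition at the interior minimizer; the paper kills the cross term $\langle\mathcal{P}_{2s}g,v_{\xi_k}\rangle$ by splitting the sphere near the concentration point, whereas you use weak convergence. Beyond that, your Brezis--Lieb argument supplies the $L^{\frac{2n}{n+2s}}$-minimizer case that the paper explicitly skips, and your conformal-covariance identification of the tangent vectors replaces the paper's appeal to earlier work; you are also right that the weight should be $J_\Phi^{\frac{n+2s}{2n}}$ in the $\mathcal{P}_{2s}$ pairing (equivalently $J_\Phi^{\frac{n-2s}{2n}}$ in $L^2$), which is the form actually used in the proof of Lemma \ref{loc sta sph}.
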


\begin{proof}
First we claim that the infimums can be obtained, and we only prove the case when the distance is $H^{-s}(\mathbb{S}^n)$ norm. Denote by
$$F(\xi)=\langle\mathcal{P}_{2s}(g-v_{\xi}),g-v_{\xi}\rangle,$$ where $v_{\xi}=\pm c_{n,s}\left(\frac{\sqrt{1-|\xi|^2}}{1-\xi\cdot \eta}\right)^{\frac{n+2s}{2}}$, then $F(\xi)$ is a continuous function on
$B^{n+1}=\{x\in \mathbb{R}^{n+1}:|x|<1\}$. Assume $\xi_{k}\in B^{n+1}$ such that
$$\lim_{k\rightarrow \infty}F(\xi_k)=\inf_{h\in \mathcal{M}}\langle\mathcal{P}_{2s}(g-h),g-h\rangle.$$
Since there exist an $\xi_0\in \mathbb{R}^{n+1}$ with $|\xi_0|\leq 1$ satisfying $\xi_k \rightarrow \xi_0$ (up to a subsequence), then if $|\xi_0|<1$, by the continuity of $F(c,\xi)$ we obtain
$$F(\xi_0)=\inf_{h\in \mathcal{M}}\langle\mathcal{P}_{2s}(g-h),g-h\rangle.$$
If $|\xi_0|=1$, since $$\inf_{h\in M_{HLS}}\langle\mathcal{P}_{2s}(g-h),g-h\rangle \leq \frac{1}{2}\mathcal{S}_{s,n}^{\frac{n}{2s}},$$
recalling that $c_{n,s}=\left(\frac{\Gamma(\frac{n}{2}+s)}{\Gamma(\frac{n}{2}-s)}\right)^{\frac{n+2s}{4s}}$ we have
\begin{align*}
&\frac{1}{2}\mathcal{S}_{s,n}^{\frac{n}{2s}}\geq  \lim_{k\rightarrow \infty}\int_{\mathbb{S}^n}\mathcal{P}_{2s}(g\mp c_{n,s}(\frac{\sqrt{1-|\xi_k|^2}}{1-\xi_k\cdot\omega})^{\frac{n-2s}{2}})(g\mp c_{n,s}(\frac{\sqrt{1-|\xi_k|^2}}{1-\xi_k\cdot\omega})^{\frac{n-2s}{2}})d\sigma_\xi\\
& =\langle\mathcal{P}_{2s}g,g\rangle+\mathcal{S}_{s,n}^{\frac{n}{2s}}\mp 2\lim_{k\rightarrow \infty} \langle\mathcal{P}_{2s}g,c_{n,s}(\frac{\sqrt{1-|\xi_k|^2}}{1-\xi_k\cdot\omega})^{\frac{n+2s}{2}}\rangle.\\
\end{align*}
Next let us prove
$$\int_{\mathbb{S}^n}\mathcal{P}_{2s}g(\omega)c_{n,s}(\frac{\sqrt{1-|\xi_k|^2}}{1-\xi_k\cdot\omega})^{\frac{n+2s}{2}}d\omega=0.$$
In fact for any $\epsilon>0$
$$\int_{\{\omega\cdot \xi_0<1-\epsilon\}}\mathcal{P}_{2s}g(\omega)c_{n,s}(\frac{\sqrt{1-|\xi_k|^2}}{1-\xi_k\cdot\omega})^{\frac{n+2s}{2}}d\omega\rightarrow 0, \text{as}~~k\rightarrow\infty$$
by Lebesgue Dominated Convergence Theorem.

On the other hand, by the H\"{o}lder inequality and the fact $\mathcal{P}_{2s}g(\omega)\in L^{\frac{2n}{n-2s}}(\mathbb{S}^n)$ we have
$$|\int_{\{\omega\cdot \xi_0\geq 1-\epsilon\}}\mathcal{P}_{2s}g(\omega)c_{n,s}(\frac{\sqrt{1-|\xi_k|^2}}{1-\xi_k\cdot\omega})^{\frac{n+2s}{2}}d\omega|
\leq \|\mathcal{P}_{2s}g(\omega)\chi_{\{\omega\cdot \xi_0\geq 1-\epsilon\}}\|_{L^{\frac{2n}{n-2s}}(\mathbb{S}^n)}\mathcal{S}_{s,n}^{\frac{n+2s}{4s}}\rightarrow 0,$$
as $\epsilon\rightarrow 0$,
which implies a contradiction $\frac{1}{2}\mathcal{S}_{s,n}^{\frac{n}{2s}}>\mathcal{S}_{s,n}^{\frac{n}{2s}}$ . Then we complete the proof of the claim.
\vskip0.1cm

We can assume that $\phi=v_{\xi_0}=c_{n,s}J_{\Phi}^{\frac{n+2s}{2n}}\in \mathcal{M} $ such that
$$\inf_{h\in \mathcal{M}}\langle\mathcal{P}_{2s}(g-h),g-h\rangle=\langle\mathcal{P}_{2s}(g-\phi),g-\phi\rangle,$$
then we have
$$r\bot \text{span}\{\partial_{\xi_i} v_{\xi}|_{\xi_0},~~i=1,\cdots,n+1 \}.$$
in the ``inner product" $\langle\mathcal{P}_{2s}\cdot,\cdot\rangle$. We have already proved that
$$\text{span}\{v_{\xi_0},\partial_\xi v_{c,\xi}|_{\xi_0},~~~i=1,\cdots,n+1 \}=\text{span} \{J_{\Phi}^{\frac{n+2s}{2n}}Y_{0,1}, J_{\Phi}^{\frac{n+2s}{2n}}Y_{1,i}\circ \Phi,~~i=1,\cdots,n+1\}$$
in \cite{CLT2} and together with the fact $v_{\xi_0}=cJ_{\Phi}^{\frac{n+2s}{2n}}Y_{0,1}$ for some constant $c$, $v_{\xi_0}\perp \partial_{\xi_i} v_{c,\xi}$ and $J_{\Phi}^{\frac{n+2s}{2n}}Y_{0,1}\perp J_{\Phi}^{\frac{n+2s}{2n}}Y_{1,i}$ for $i=1,\cdots,n+1$, we know that
$$\text{span}\{\partial_\xi v_{c,\xi}|_{\xi_0}i=1,\cdots,n+1 \}=\text{span} \{ J_{\Phi}^{\frac{n+2s}{2n}}Y_{1,i}\circ \Phi,~~i=1,\cdots,n+1\}$$
\end{proof}

\begin{lemma}\label{approxi}
If $u, ~u_n \in L^{\frac{2n}{n+2s}}(\mathbb{S}^n)$ with $u_n\rightarrow u$ in $L^{\frac{2n}{n+2s}}(\mathbb{S}^n)$, then
$$\left\||u_n|^{-\frac{4s}{n+2s}}u_n-\mathcal{P}_{2s}u_n\right\|_{L^{\frac{2n}{n-2s}}(\mathbb{S}^n)}
\rightarrow\left\||u|^{-\frac{4s}{n+2s}}u-\mathcal{P}_{2s}u\right\|_{L^{\frac{2n}{n-2s}}(\mathbb{S}^n)}.$$
\end{lemma}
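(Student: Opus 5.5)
The plan is to show that the map $u\mapsto |u|^{-\frac{4s}{n+2s}}u-\mathcal{P}_{2s}u$ is continuous from $L^{\frac{2n}{n+2s}}(\mathbb{S}^n)$ into $L^{\frac{2n}{n-2s}}(\mathbb{S}^n)$. The stated convergence of norms then follows from the reverse triangle inequality. We treat the two terms separately and write $p=\frac{2n}{n+2s}$ and $p'=\frac{2n}{n-2s}$, so that $p$ and $p'$ are dual exponents.

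For the linear term we use the HLS inequality on the sphere, in its non-symmetric form. It says that $\mathcal{P}_{2s}$ is a bounded operator from $L^{p}(\mathbb{S}^n)$ to $L^{p'}(\mathbb{S}^n)$. This can be obtained from the stated quadratic form inequality by duality: $\langle \mathcal{P}_{2s}v,w\rangle\le \langle\mathcal{P}_{2s}v,v\rangle^{1/2}\langle\mathcal{P}_{2s}w,w\rangle^{1/2}\le \mathcal{S}_{s,n}^{-1}\|v\|_{p}\|w\|_{p}$, using Cauchy--Schwarz for the positive definite form $\langle\mathcal{P}_{2s}\cdot,\cdot\rangle$. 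Taking the supremum over $\|w\|_p\le1$ gives $\|\mathcal{P}_{2s}v\|_{p'}\le \mathcal{S}_{s,n}^{-1}\|v\|_{p}$. Applying this with $v=u_n-u$ shows that $\mathcal{P}_{2s}u_n\to\mathcal{P}_{2s}u$ in $L^{p'}$.

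For the nonlinear term let $F(t)=|t|^{-\frac{4s}{n+2s}}t=|t|^{p-2}t$. Its exponent is $\frac{n-2s}{n+2s}=p-1\in(0,1)$, and $|F(a)|^{p'}=|a|^{p}$. We will use the elementary Hölder continuity estimate $|F(a)-F(b)|\le C|a-b|^{p-1}$. This is the same inequality recorded earlier and proved in the Appendix, and it holds because $t\mapsto |t|^{q-1}t$ is $q$-Hölder when $0<q<1$. Then
$$\int_{\mathbb{S}^n}|F(u_n)-F(u)|^{p'}d\omega\le C\int_{\mathbb{S}^n}|u_n-u|^{(p-1)p'}d\omega=C\|u_n-u\|_{p}^{p}\to0,$$
since $(p-1)p'=p$. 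So $F(u_n)\to F(u)$ in $L^{p'}$. If one prefers not to rely on the Hölder estimate, an alternative is to pass to an a.e. convergent subsequence dominated in $L^p$ (partial converse of dominated convergence), apply the generalized dominated convergence theorem to $|F(u_n)-F(u)|^{p'}\lesssim |u_n|^p+|u|^p$, and then use the subsequence-of-subsequence argument.

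Combining the two pieces with Minkowski's inequality gives convergence of $F(u_n)-\mathcal{P}_{2s}u_n$ to $F(u)-\mathcal{P}_{2s}u$ in $L^{p'}(\mathbb{S}^n)$, and hence convergence of the norms. The only point needing care is the Hölder-continuity inequality for $F$. It follows by reducing to $|a|\ge|b|$. If $a$ and $b$ have the same sign, concavity of $t^q$ gives $a^q-b^q\le(a-b)^q$. If they have opposite signs, $a^q+b^q\le 2^{1-q}(a+b)^q$. Neither case is a real obstacle.
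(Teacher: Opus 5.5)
Your proposal is correct and follows essentially the same route as the paper. The nonlinear term is handled by the Hölder continuity of $t\mapsto|t|^{-\frac{4s}{n+2s}}t$ together with $(p-1)p'=p$. The linear term is handled by duality, Cauchy--Schwarz for the form $\langle\mathcal{P}_{2s}\cdot,\cdot\rangle$, and the HLS inequality, which give $\|\mathcal{P}_{2s}v\|_{L^{p'}}\le\mathcal{S}_{s,n}^{-1}\|v\|_{L^{p}}$.
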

\begin{proof}
Using the inequality $||a|^{p-1}a-|b|^{p-1}b|\lesssim |a-b|^{p}$ for all $a,b\in \mathbb{R}^n$ and $0\leq p\leq 1$, we have
$$\||u|^{-\frac{4s}{n+2s}}u-|u_n|^{-\frac{4s}{n+2s}}u_n\|_{L^{\frac{2n}{n-2s}}(\mathbb{S}^n)}\lesssim 2^{-\frac{4s}{n+2s}}\|u-u_n\|^{\frac{n-2s}{n+2s}}_{L^{\frac{2n}{n+2s}}(\mathbb{S}^n)}\rightarrow0,$$
On the other hand
\begin{align*}
& \|\mathcal{P}_{2s}(u-u_n)\|_{L^{\frac{2n}{n-2s}}(\mathbb{S}^n)}=\sup\limits_{\|g\|_{L^{\frac{2n}{n+2s}}(\mathbb{S}^n)}\leq 1}\langle\mathcal{P}_{2s}(u-u_n),g\rangle\\
& \leq
\sup\limits_{\|g\|_{L^{\frac{2n}{n+2s}}(\mathbb{S}^n)}\leq 1}\|\mathcal{P}_{s}(u-u_n)\|_{L^2(\mathbb{S}^n)}\|\mathcal{P}_{s}g\|_{L^2(\mathbb{S}^n)}\\
& \leq \mathcal{S}_{s,n}^{-1}\|u-u_n\|_{L^{\frac{2n}{n+2s}}(\mathbb{S}^n)}\rightarrow0.
\end{align*}
\end{proof}
Thus we have
$$\left\||u_n|^{-\frac{4s}{n+2s}}u_n-\mathcal{P}_{2s}u_n\right\|_{L^{\frac{2n}{n-2s}}(\mathbb{S}^n)}
\rightarrow\left\||u|^{-\frac{4s}{n+2s}}u-\mathcal{P}_{2s}u\right\|_{L^{\frac{2n}{n-2s}}(\mathbb{S}^n)}.$$

\begin{lemma}\label{lemma-holder-1d}
Let
\[
F(t)=|t|^{\beta-1}t,
\qquad 0<\beta<1.
\]
Then there exists a constant $C_\beta>0$ such that
\[
|F(a+b)-F(a)|
\leq
C_\beta |b|^\beta,
\qquad
\forall\, a,b\in \mathbb R.
\]
In particular,
\[
\left|
|a+b|^{-\frac{4s}{n+2s}}(a+b)
-
|a|^{-\frac{4s}{n+2s}}a
\right|
\leq
C_n |b|^{\frac{n-2s}{n+2s}}.
\]
\end{lemma}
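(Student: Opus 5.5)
We need to prove $|F(a+b)-F(a)|\le C_\beta|b|^\beta$ for $F(t)=|t|^{\beta-1}t$ with $0<\beta<1$. The plan is to exploit homogeneity to normalize $b$, and then split according to the size of $a$.

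\textbf{Reduction.} If $b=0$ there is nothing to prove. Otherwise $F$ is odd and homogeneous of degree $\beta$: $F(\lambda t)=\lambda^\beta F(t)$ for $\lambda>0$. Dividing by $|b|^\beta$ and replacing $(a,b)$ by $(a/|b|,\,b/|b|)$, and using oddness to flip the sign if needed, it suffices to prove
\[
\sup_{a\in\mathbb R}|F(a+1)-F(a)|<\infty .
\]

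\textbf{Bounded $a$.} For $|a|\le 2$ we have $|a+1|\le 3$. Since $|F(t)|=|t|^\beta$, this gives $|F(a+1)-F(a)|\le 3^\beta+2^\beta$.

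\textbf{Large $a$.} For $|a|>2$, the points $a$ and $a+1$ have the same sign, and every point $\xi$ between them satisfies $|\xi|\ge |a|-1\ge 1$. On $\mathbb R\setminus\{0\}$, $F$ is $C^1$ with $F'(t)=\beta|t|^{\beta-1}$. The mean value theorem therefore yields
\[
|F(a+1)-F(a)|=\beta|\xi|^{\beta-1}\le\beta ,
\]
because $\beta-1<0$ and $|\xi|\ge1$.

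\textbf{Conclusion.} Combining the two cases, $C_\beta=3^\beta+2^\beta$ works. The displayed special case follows by taking $\beta=\frac{n-2s}{n+2s}\in(0,1)$, noting that $|t|^{-\frac{4s}{n+2s}}t=|t|^{\beta-1}t$ because $\beta-1=-\frac{4s}{n+2s}$.

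There is no real obstacle. The only care needed is to ensure the mean value theorem is applied on an interval that avoids the singularity of $F'$ at $0$, which is exactly why the case split is made at $|a|=2$.
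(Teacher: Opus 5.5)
Your proof is correct and is essentially the paper's argument: the paper splits directly into $|a|\le 2|b|$ (crude bound $|a+b|^\beta+|a|^\beta\le(3^\beta+2^\beta)|b|^\beta$) and $|a|>2|b|$ (mean value theorem with $|a+\theta b|\ge |a|/2>|b|$), which is exactly your split after you normalize to $|b|=1$. Your homogeneity-and-oddness reduction is only a cosmetic simplification, and it gives the same explicit constant $C_\beta=3^\beta+2^\beta$.
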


\begin{proof}
We divide the proof into two cases.

\medskip

\noindent
\textbf{Case 1:} $|a|\leq 2|b|$.

Since
\[
|F(t)|=|t|^\beta,
\]
we have
\[
\begin{aligned}
|F(a+b)-F(a)|
&\leq |F(a+b)|+|F(a)| \\
&= |a+b|^\beta+|a|^\beta.
\end{aligned}
\]
Using
\[
|a+b|
\leq |a|+|b|
\leq 3|b|,
\]
it follows that
\[
|F(a+b)-F(a)|
\leq
3^\beta |b|^\beta+2^\beta |b|^\beta
\leq C |b|^\beta.
\]

\medskip

\noindent
\textbf{Case 2:} $|a|>2|b|$.

Define
\[
F(t)=|t|^{\beta-1}t.
\]
Since $F\in C^1(\mathbb R\setminus\{0\})$, by the mean value theorem there exists $\theta\in(0,1)$ such that
\[
F(a+b)-F(a)
=
F'(a+\theta b)b.
\]
A direct computation gives
\[
|F'(t)|=\beta |t|^{\beta-1}.
\]
Hence
\[
|F(a+b)-F(a)|
=
\beta |a+\theta b|^{\beta-1}|b|.
\]
Moreover,
\[
|a+\theta b|
\geq |a|-|\theta b|
\geq |a|-|b|
\geq \frac{|a|}{2}.
\]
Therefore,
\[
|F(a+b)-F(a)|
\leq
C |a|^{\beta-1}|b|.
\]
Since $\beta-1<0$ and $|a|>2|b|$, we obtain
\[
|a|^{\beta-1}
\leq
(2|b|)^{\beta-1}.
\]
Consequently,
\[
|F(a+b)-F(a)|
\leq
C |b|^{\beta-1}|b|
=
C |b|^\beta.
\]
Thus, we have accomplished the proof of Lemma \ref{lemma-holder-1d}.
\end{proof}

\bibliographystyle{amsalpha}

\end{document}